\newenvironment{proof}[1][] {\noindent {\bf Proof#1:} }{\hspace*{\fill}$\square$\medskip\par}
\newtheorem{thm}{Theorem}
\newtheorem{lem}[thm]{Lemma}
\newtheorem{prop}[thm]{Proposition}
\newtheorem{cor}[thm]{Corrolary}
\newtheorem{example}[thm]{Example}
\newtheorem{rem}[thm]{Remark}
\def\R{{\mathbb R}}
\def\E{{\mathbb E\,}}
\def\P{{\mathbb P}}
\def\N{{\mathbb N}}
\def\eps{\varepsilon}
\def\ow{\overline{w}}
\def\oww{\overline{W}}
\def\orr{\overline{W}^{(r)}}
\DeclareMathOperator*{\argmax}{arg\,max}
\newcommand{\backvec}[1]{\reflectbox{$\vec{\reflectbox{\!$#1$}}$}}
\author{S.E. Nikitin}
\title{On the minimal integral energy of majorants of the Wiener process}	
\date{\today}
\begin{document}

\maketitle

\begin{abstract}
	We consider the asymptotic behavior (over long time intervals) of the minimal integral energy  
\[
	|h|_T^\psi = \int_0^T \psi(h^\prime(t)) \, \mathrm{d}t 
\]  
of majorants of the Wiener process $ W(\cdot) $ satisfying the constraints $ h(0) = r $, $ h(t) \geq W(t) $ for $ 0 \leq t \leq T $.

The results significantly generalize previous asymptotic estimates obtained for the case of kinetic energy $ \psi(u) = u^2 $, revealing that this case, where the minimal energy grows logarithmically, is a critical one, lying between two different asymptotic regimes.

\end{abstract}

\section{Problem statement and main results}

\subsection{Introduction}
Let $AC[0,T]$ be the space of absolutely continuous functions on the interval $[0,T]$.
For an arbitrary convex function $\psi: \mathbb{R} \to \mathbb{R}_{+}$ and any function $h \in AC[0,T]$, consider the energy functional
\[
	|h|_T^\psi := \int_0^T \psi(h^\prime(t)) \mathrm{d}t. 
\]	

The articles \cite{BL20,KL19,IKL19,KL17,LS15,LS21,eS18, LP24, nK22} investigate the problem of approximating trajectories of various stochastic processes by functions $h(\cdot)$ that minimize a certain type of energy, while satisfying constraints on the proximity between $h(\cdot)$ and the process trajectory. Most often, the approximated process is the Wiener process and the energy considered is kinetic energy, that is, $|h|_{T}^{\psi}$ with $\psi(u)=u^2$. The articles \cite{LS15, eS18, LP24} study energy-efficient approximations of the Wiener process $W(\cdot )$ under {\it bilateral} uniform constraints for arbitrary energy functionals. For $T,r>0$, define the set of admissible approximations
\[
	M^\pm_{T, r} := \left\{h \in AC[0, T] \, \big| \, \forall t \in [0, T] : 
	|h(t)-W(t)| \le r; h(0) = 0 \right\}
\]
and the minimal integral energy
\[
		I^{\pm}_W(T,r,\psi) := \inf \left\{|h|_T^\psi \: \big| \: h \in M^\pm_{T,r} \right\}.
\]

In \cite{eS18}, it was shown that the approximation at which $\inf \left\{|h|_T^\psi \: \big| \: h \in M^\pm_{T,r} \right\}$ is attained—called the {\it taut string}—does not depend on the form of $\psi(\cdot)$. 
The result of \cite{eS18} was improved in \cite{LP24}, where it was shown that for a fairly wide class of convex functions $\psi(\cdot)$, and for fixed $r>0$, it holds that
\[
	\frac{I^\pm_W(T,r,\psi)}{T} 
	\stackrel{\mathrm{a.s.}}{\longrightarrow} \int_{\R}\psi (u) \, \nu(\mathrm{d}u), \quad T \to \infty,
\]
where $\nu$ is a measure on $\R$ that depends on $r$ and admits an explicit expression.

In this paper, we are interested in the behavior of the minimum value of the functional  
$|\cdot |_{T}^{\psi}$ over approximations of the Wiener process trajectories under fixed {\it unilateral} constraints; that is, the set of admissible approximations for given $T,r>0$ takes the form
\[
	M_{T, r} := \left\{h \in AC[0, T] \, \big| \, \forall t \in [0, T] : 
	h(t)  \ge  W(t) - r ; h(0) = 0 \right\},
\]
and we are interested in the asymptotic behavior of the minimal integral energy
\[
	I_W^\psi(T,r) := \inf \left\{|h|_T^\psi \: \big| \: h \in M_{T,r} \right\}
\]
as $T \to \infty$.
The asymptotics of this quantity for $\psi(u) = u^2$ is studied in \cite{LN24}.

For technical convenience, we shift the initial value of the approximation to the point $r$, so that the approximating function lies above the trajectory of the process $W(\cdot )$. Define
\[
	M_{T, r}^\prime :=  \left\{h \in AC[0, T] \, \big| \, \forall t \in [0, T] : 
	h(t) \ge W(t); h(0) = r \right\}.
\]
Since the sets $M_{T, r}$ and $M_{T, r}^\prime$ differ only by a constant shift, it is clear that
\[
	I_W^{\psi}(T,r) = \inf \left\{|h|_T^\psi \:\big | \: h \in M_{T,r}^\prime \right\}.
\]
We will show below that, unlike in the bilateral constraint case, the value of $r$ does not affect the asymptotic behavior of $I_W^{\psi}(\cdot ,r)$.

If the minimum of $\psi(\cdot)$ is not attained at zero or is not equal to 0, then $|\cdot |_{T}^{\psi}$ is minimized by a function that is eventually a non-random linear function. Therefore, only functions $\psi(\cdot)$ satisfying $\psi(0)=0$ and attaining their minimum at zero are of interest.

It turns out that the behavior of the minimal energy is determined by the values of the function $\psi(\cdot)$ in a right neighborhood of zero. As follows from Theorem \ref{t:finite} below, if for small $u>0$ the values of $\psi(u)$ are significantly smaller than $u^2$, then the minimal integral energy is finite over the entire time axis; that is,
$\sup_{T>0} I_W^{\psi}(T,r)<\infty$ almost surely. For this reason, the main focus is on convex functions $\psi(\cdot)$ that are $\varkappa$-regularly varying at zero, with $\varkappa \in [1,2]$; that is, functions of the form
\begin{equation} \label{cond:reg}
    \psi(u) = u^{\varkappa}\, \theta(u),
\end{equation}
where $\theta(\cdot)$ is slowly varying at zero. The behavior of $I_W^{\psi}(T,r)$ primarily depends on the value of the parameter $\varkappa$, with the cases $\varkappa=2$ and $\varkappa<2$ being fundamentally different.
To express the conditions imposed on $\psi(\cdot)$, it is convenient to use the Karamata representation for the function $\theta(\cdot)$ (see \cite[Theorem 1.2]{eS85}); namely, for $u \in (0,1)$,
\begin{equation} \label{e:Kara}
    \theta(u) = c(u) \exp \left( \int_{u}^{1} \frac{\varepsilon(v)}{v}\, \mathrm{d}v \right), 
\end{equation} 
where $c(\cdot)$ is a positive bounded measurable function $c(\cdot)$ with a nonzero limit at zero, and $\varepsilon(\cdot)$ is a  bounded measurable function  with zero limit at zero.

In subsection \ref{sec:1.2}, we state results for the case $\varkappa \in [1, 2)$.
In subsection \ref{sec:1.3}, we state results for the case $\varkappa \ge 2$.
\\
In Section \ref{sec:2}, we establish a connection between unilateral energy-efficient approximations of arbitrary continuous functions and their minimal concave majorants. We also derive key properties of the minimal concave majorant of the Wiener process, relying heavily on the results of Groeneboom \cite{pG83}. 
Section \ref{sec:3} contains the proofs of the theorems for the case $\varkappa \in [1, 2)$.  
Section \ref{sec:4} contains the proofs of the theorems for the case $\varkappa \ge 2$.

\subsection{Results for $\varkappa \in [1,2)$} \label{sec:1.2}

The first theorem describes lower bounds for the lower limit of $I_{W}^{\psi}(T,r)$ as $T \to \infty$; specifically, it identifies deterministic functions of $T$ that are almost surely eventually exceeded by $I_W^{\psi}(T,r)$. In other words, any approximation of $W(\cdot)$ almost surely requires more energy to remain above $W(\cdot)$ over the entire time interval $[0,T]$ for all sufficiently large $T$.

\begin{thm} \label{t:liminf_low} 
Let $\psi(\cdot)$ be a convex function of the form \eqref{cond:reg} with $\varkappa \in [1,2)$, and suppose $\theta(\cdot)$ admits the Karamata representation \eqref{e:Kara} for $u \in (0, 1)$. Assume further that there exists $\alpha \in (0,1)$ such that, for all sufficiently small $v > 0$,
\begin{equation} \label{cond:eps_alpha}
       \varepsilon(v) < \frac{1}{|\log v|^{\alpha}}.
\end{equation}
Let $g(\cdot)$ be a slowly varying function at infinity satisfying, for some $\xi_{\circ} > 2- \varkappa > \xi^{\circ} > 0$,
\begin{equation} \label{cond:t1g1}
	(\log b)^{-\xi_{\circ}} \ll g(b) \ll (\log b)^{-\xi^{\circ}}, \quad b \to \infty
\end{equation} 
and
\begin{equation} \label{cond:t1g2}
      \int^{\infty} \frac{g(b)^{1 / (2-\varkappa)}}{b}\, \mathrm{d}b < \infty.
\end{equation} 
Then, for any $r > 0$, it holds that
\begin{equation*} \label{e:liminf_low}
	\liminf_{T \to \infty}\frac{I_{W}^{\psi}(T,r)}{T\psi(T^{-1 / 2})g(T)}    
     \stackrel{\text{a.s.}}{=} \infty.
\end{equation*}

\end{thm}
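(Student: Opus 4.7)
My plan combines the Section~\ref{sec:2} reduction to the concave majorant, Brownian scaling, and a Borel--Cantelli argument driven by a small-ball estimate. By the Section~\ref{sec:2} correspondence, the optimal $h$ achieving $I_W^\psi(T,r)$ coincides on $[0,\tau_T^*]$ (where $\tau_T^*:=\argmax_{[0,T]}W$) with a translated version of the minimal concave majorant $\orr$ of $W$, and is constant afterwards at zero energy cost (since $\psi(0)=0$). Writing $W_t=\sqrt{T}\,B(t/T)$ for a rescaled Wiener process $B$ on $[0,1]$ with concave majorant $\hat B$, I get, after dividing by $T\psi(T^{-1/2})$,
\[
  \frac{I_W^\psi(T,r)}{T\psi(T^{-1/2})} \;\gtrsim\; \int_0^{\tau_1^*}\hat B'(u)^\varkappa\,\frac{\theta(\hat B'(u)/\sqrt{T})}{\theta(T^{-1/2})}\,\mathrm{d}u,
\]
with $\tau_1^*:=\argmax B$. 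Hypothesis \eqref{cond:eps_alpha} combined with Potter-type bounds makes the inner $\theta$-ratio close to $1$ up to sub-polynomial corrections, so the essential quantity is $Y:=\int_0^{\tau_1^*}\hat B'(u)^\varkappa\,\mathrm{d}u$, whose law does not depend on $T$ by Brownian scaling and which is a.s.\ finite because $\varkappa<2$ ensures integrability of $\hat B'(u)^\varkappa$ near $u=0$.

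Since $I_W^\psi(\cdot,r)$ is nondecreasing in $T$ and the normalizer $T^{1-\varkappa/2}\theta(T^{-1/2})g(T)$ varies by a bounded factor between $T_n:=2^n$ and $T_{n+1}=2T_n$, it suffices to verify via Borel--Cantelli that
\[
  \sum_{n\geq 1}\P\!\bigl(Y\leq M g(T_n)\bigr) \;<\;\infty \qquad\text{for every } M>0.
\]
The integrability hypothesis \eqref{cond:t1g2} is equivalent to $\sum_n g(T_n)^{1/(2-\varkappa)}<\infty$, and \eqref{cond:t1g1} (specifically $\xi_\circ>2-\varkappa$) ensures that sub-polynomial factors remain summable after multiplication by $g(T_n)^{1/(2-\varkappa)}$. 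Thus the task reduces to the polynomial small-ball estimate
\[
  \P(Y\leq y) \;\leq\; C\,y^{1/(2-\varkappa)}\,|\log y|^{O(1)} \quad (y\to 0^+).
\]

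To establish this small-ball bound I would invoke Groeneboom's description of the slope process of the Brownian concave majorant, which entails $\hat B'(u)\asymp\sqrt{\log(1/u)/u}$ for typical paths near $u=0$, and hence $\int_0^\delta\hat B'(u)^\varkappa\,\mathrm{d}u\asymp\delta^{1-\varkappa/2}(\log(1/\delta))^{\varkappa/2}$. Requiring $Y\leq y$ therefore forces $\delta\lesssim y^{2/(2-\varkappa)}$ up to logarithmic factors; on this very small initial window the Brownian maximum must satisfy $\max_{[0,\delta]}|B|\lesssim y^{1/(2-\varkappa)}$, whose probability is precisely of order $y^{1/(2-\varkappa)}$ by the Gaussian small-ball estimate. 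The contribution from the middle range $[\delta,\tau_1^*]$ is controlled using the Markov property of the slope process, and a symmetric argument handles a neighborhood of $\tau_1^*$.

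The main obstacle is making this small-ball estimate quantitatively sharp and extracting the exponent $1/(2-\varkappa)$ in the presence of Karamata perturbations. This is where hypothesis \eqref{cond:eps_alpha} is essential: it gives $\theta(\hat B'(u)/\sqrt{T})/\theta(T^{-1/2})\leq \exp(O((\log T)^{1-\alpha}))$, which is sub-polynomial in $g(T)^{-1}$ by \eqref{cond:t1g1}, hence absorbable into the $|\log y|^{O(1)}$ correction. Once the small-ball bound is in hand, the Borel--Cantelli argument and the standard dyadic interpolation finish the proof.
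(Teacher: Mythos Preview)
Your overall skeleton---a small-ball estimate combined with Borel--Cantelli along a geometric sequence, then interpolation via monotonicity of $I_W^\psi(\cdot,r)$---matches the paper's strategy. But two of the load-bearing steps in your plan are genuine gaps, not merely technicalities.

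\textbf{The small-ball estimate is not established, and your heuristic does not produce the right exponent.} Your argument for $\P(Y\le y)\lesssim y^{1/(2-\varkappa)}$ rests on the typical behavior $\hat B'(u)\asymp u^{-1/2}$ near $u=0$, but on the event $\{Y\le y\}$ this typical behavior is precisely what fails, so the reasoning is circular. A direct approach via Jensen, $\int_0^\delta \hat B'(u)^\varkappa\,\mathrm{d}u\ge \delta^{1-\varkappa}\hat B(\delta)^\varkappa\ge \delta^{1-\varkappa}(\max_{[0,\delta]}B)^\varkappa$, combined with the reflection principle, yields only $\P(Y\le y)\lesssim y^{1/\varkappa}$, which for $\varkappa\in(1,2)$ is strictly weaker than $y^{1/(2-\varkappa)}$ and is \emph{not} summable against $g(T_n)$ under \eqref{cond:t1g2}. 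The paper obtains the sharp exponent by exploiting Groeneboom's Poisson representation \eqref{e:tau_int}--\eqref{e:L_int3}: the event in question is dominated by $\{N_{\psi,b}([\rho,\infty))=0\}$, whose probability is $\exp(-\mu_{\psi,b}([\rho,\infty)))$, and an explicit computation of the intensity (Lemma~\ref{l:liminf_low}) gives $\mu_{\psi,b}([\rho,\infty))\sim \frac{1}{2-\varkappa}|\log\rho|$, hence the bound $\rho^{1/(2-\varkappa)}$. Nothing in your sketch accesses this structure.

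\textbf{The $\theta$-ratio control does not go through with the one-sided hypothesis.} Condition~\eqref{cond:eps_alpha} bounds $\varepsilon(v)$ only from above; via the Karamata representation this gives a \emph{lower} bound on $\theta(\hat B'(u)/\sqrt{T})/\theta(T^{-1/2})$ only when $\hat B'(u)\ge 1$, and even then the uniform bound over all such $u$ is $\exp(-C(\log T)^{1-\alpha})$, which is far from ``close to $1$'' and would force you to prove $Y\ge Mg(T)\exp(C(\log T)^{1-\alpha})\to\infty$. The remedy is to restrict to slopes in a window like $\hat B'(u)\in[1,(\log T)^K]$, where the ratio is genuinely $1+o(1)$; but then the integration domain depends on $T$, so the law of the truncated integral is no longer $T$-independent and your clean reduction to a single random variable $Y$ collapses. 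The paper confronts exactly this issue in the slope parametrization: the region $\widetilde Q_\psi(b,\rho)$ in the proof of Lemma~\ref{l:liminf_low} is chosen so that $s\ge(\log b)^{-C}$, making the $\theta$-ratio uniformly close to $1$ (inequality~\eqref{e:theta_theta_est}); the resulting small-ball bound is then stated \emph{uniformly} over $\rho$ in the window $P(\xi_\star,\xi^\star,b)$, which is what the subsequent Borel--Cantelli argument actually needs.

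Finally, the paper's proof carries an extra layer you have not anticipated: because it indexes by the slope parameter $b$ (with $\tau(b_n)\le T<\tau(b_{n+1})$) rather than by $T$ directly, it must control how far $T$ can lie from the last jump of $\tau$, which is done via the double family of events $A_{n,m}$ tracking both $L_\psi(b_n)$ and the jump sizes of $\tau$ on $(b_n,b_{n+1}]$. Your time-based discretization $T_n=2^n$ would sidestep this layer, but only if you had a direct small-ball estimate for $I_W^\psi(T_n,r)$---which brings you back to the two gaps above.
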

\begin{rem}
Examples of admissible functions $\theta(\cdot)$ include $|\log(\cdot)|^{\alpha}$ with $\alpha \in \mathbb{R}$ and \, $\exp\left( \pm|\log(\cdot)|^{\alpha} \right)$ with $\alpha \in (0,1)$. Note that the constraint on $\varepsilon(\cdot)$ is unilateral, meaning it controls only the potential growth of $\theta(u)$ as $u \downarrow 0$. The theorem imposes no restrictions on the possible decay of $\theta(u)$ in the same limit. 
\end{rem}

The next theorem describes upper bounds for the lower limit of $I_{W}^{\psi}(T,r)$ as $T \to \infty$; specifically, it identifies deterministic functions of $T$ that almost surely infinitely often exceed $I_W^{\psi}(T,r)$. In other words, these functions demonstrate how small the energy cost of approximating $W(\cdot)$ can be over certain long time intervals, while still remaining above $W(\cdot)$.

\begin{thm} \label{t:liminf_up}
Let $\psi(\cdot)$ be a convex function of the form \eqref{cond:reg} with $\varkappa \in [1,2)$, and suppose $\theta(\cdot)$ admits the Karamata representation \eqref{e:Kara} for $u \in (0, 1)$. Assume further that there exists $\alpha \in (0,1)$ such that, for all sufficiently small $v > 0$,
\[
   |\varepsilon(v)| < \frac{1}{|\log v|^{\alpha}}.
\]
Let $g(\cdot)$ be a slowly varying function at infinity satisfying, for some $\xi_{\circ} > 2- \varkappa > \xi^{\circ} > 0$,
\begin{equation} \label{cond:g3}
	(\log b)^{-\xi_{\circ}} \ll g(b) \ll (\log b)^{-\xi^{\circ}}, \quad b \to \infty
\end{equation} 
and
\begin{equation} \label{cond:g2}
     \int^{\infty} \frac{g(b)^{1 / (2-\varkappa)}}{b}\, \mathrm{d}b = \infty.
\end{equation} 
Then, for any $r > 0$, it holds that
\begin{equation*} \label{e:liminf_up}
	\liminf_{T \to \infty}\frac{I_{W}^{\psi}(T,r)}{T\psi(T^{-1 / 2})g(T)}    
     \stackrel{\text{a.s.}}{=} 0.
\end{equation*}

\end{thm}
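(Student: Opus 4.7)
My plan is a Borel--Cantelli integral-test argument along a sparse subsequence, complementary to the convergence argument that underlies Theorem \ref{t:liminf_low}. The starting point is the reduction from Section \ref{sec:2}: the infimum $I_W^\psi(T,r)$ is realized (up to a negligible terminal correction) by the slope process of the minimal concave majorant $\overline{W}^{(r)}$ of $W$ on $[0,T]$, so that
\[
I_W^\psi(T,r) \;=\; \sum_{i}\ell_i(T)\,\psi\!\bigl(\sigma_i(T)\bigr),
\]
where $\bigl(\ell_i(T),\sigma_i(T)\bigr)$ are the lengths and slopes of the linear pieces of $\overline{W}^{(r)}$. Groeneboom's description of this point process, together with Brownian scaling, supplies the combinatorial input for the probability estimates below.

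Fix a geometric sequence $T_k = q^k$ with $q>1$, a small $\eta>0$, and introduce
\[
A_k \;=\; \bigl\{ I_W^\psi(T_k,r) < \eta\, T_k\,\psi(T_k^{-1/2})\,g(T_k) \bigr\}.
\]
It suffices to show $\P(A_k\ \text{i.o.})=1$ and then let $\eta\downarrow 0$ along a countable sequence. The key analytic input is a matching small-ball estimate of the form
\[
\P(A_k) \;\gtrsim\; g(T_k)^{1/(2-\varkappa)}\cdot(\text{slowly varying in }T_k),
\]
whose sum over $k$, thanks to the geometric spacing, is comparable to the integral in \eqref{cond:g2} and therefore diverges. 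This estimate comes from the Poisson-type structure of the concave-majorant slopes after the self-similar rescaling that recasts $I_W^\psi(T_k,r)/[T_k\psi(T_k^{-1/2})]$ as an energy functional of the concave majorant of a standard Brownian motion on $[0,1]$. The condition $\varkappa<2$ keeps $\psi$ subquadratic at zero so that the leading-order substitution $\psi(T^{-1/2}u)\sim\psi(T^{-1/2})u^\varkappa$ is valid; the two-sided bound $|\varepsilon(v)|<|\log v|^{-\alpha}$ built into \eqref{e:Kara} is needed to track the slowly varying factor $\theta$ \emph{in both directions} under the rescaling, which is why, unlike in Theorem \ref{t:liminf_low}, the hypothesis on $\varepsilon$ is bilateral.

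Because the $A_k$ are not independent, I would replace each with an event $A_k'$ built from the Brownian increment on $[T_{k-1},T_k]$ alone --- roughly, the statement that the concave majorant of the restart $\{W(T_{k-1}+s)-W(T_{k-1})\}_{s\le T_k-T_{k-1}}$ has atypically small energy. These $A_k'$ are independent, and the gap between $I_W^\psi(T_k,r)$ and the sum of the per-block energies only involves the few "global" vertices of $\overline{W}^{(r)}$ whose linear pieces cross $T_{k-1}$; this discrepancy is controlled in terms of the energy on the initial segment $[0,T_{k-1}]$, which, by Theorem \ref{t:liminf_low} or a soft variant of it, is a.s.\ of lower order than $T_k\psi(T_k^{-1/2})g(T_k)$. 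Hence $\P(A_k')\gtrsim\P(A_k)$, and the second Borel--Cantelli lemma yields $A_k'$ (hence $A_k$) infinitely often.

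The main obstacle is the small-ball estimate in the middle step: extracting the precise exponent $1/(2-\varkappa)$ from a rescaled energy functional of the Brownian concave majorant, while correctly handling the singular leftmost vertex (where $\sigma_1(T)=+\infty$ and contributes a boundary term independent of the rescaling) and absorbing the slowly varying $\theta$ without loss of precision. The decoupling step is a secondary difficulty, since concave majorants on disjoint intervals do not concatenate cleanly; it is precisely here that one needs the one-sided a.s.\ lower bound of Theorem \ref{t:liminf_low} to discard the boundary block $[0,T_{k-1}]$ on the right scale.
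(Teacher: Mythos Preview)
Your small-ball heuristic $\P(A_k)\gtrsim g(T_k)^{1/(2-\varkappa)}$ is the right order and corresponds to Lemma~\ref{l:liminf_up} of the paper. The decoupling step, however, has a genuine gap. You assert that the energy on $[0,T_{k-1}]$ is ``a.s.\ of lower order than $T_k\psi(T_k^{-1/2})g(T_k)$'' and invoke Theorem~\ref{t:liminf_low}. But Theorem~\ref{t:liminf_low} is a \emph{lower} bound on the $\liminf$, the wrong direction; and the claim itself is false. With geometric spacing $T_k=q^k$ one has $T_{k-1}\psi(T_{k-1}^{-1/2})\asymp T_k\psi(T_k^{-1/2})$, while $g(T_k)\to 0$, so the target $T_k\psi(T_k^{-1/2})g(T_k)$ is \emph{smaller} than the typical size of $I_W^\psi(T_{k-1},r)$ (e.g.\ by Theorem~\ref{t:limsup} the latter reaches order $T_{k-1}^{1-\varkappa/2}(\log\log T_{k-1})^{\varkappa/2}$ along a subsequence). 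Thus the initial segment dominates, and smallness of the block energy on $[T_{k-1},T_k]$ does not force $A_k$. Passing to super-geometric spacing to make $T_{k-1}/T_k\to 0$ would cure this but destroy the divergence of $\sum_k\P(A_k')$, since that sum matches the integral test \eqref{cond:g2} only under geometric sampling.

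The paper sidesteps the problem by decoupling not in Brownian time but in slope space, via the inverse-slope process $\tau(\cdot)$ of \eqref{def:tau}, whose increments are independent by the Poisson representation \eqref{e:tau_int}. Along $b_j=\bar e^{\,j}$, $\rho_j=g(b_j^2)$, the event $B_j^{(L)}=\{L_\psi(b_j)\le b_j^2\psi(b_j^{-1})\rho_j\}$ already controls the \emph{entire} energy up to the random time $\tau(b_j)$, so no initial-segment remainder appears. The random time is tied to a deterministic one by an additional event $B_j^{(\tau,M)}$ (Lemma~\ref{l:argmax}): that $W$ has a running maximum at $\tau(b_j)$ persisting on an interval of length $\asymp b_j^2$, which forces the optimal majorant to stay constant there and gives $I_W^\psi((\lambda b_j)^2,r)=I_W^\psi(\tau(b_j),r)$. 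The events $B_j=B_j^{(L)}\cap B_j^{(\tau,M)}$ are not independent, so rather than reducing to independent blocks the paper applies the Kochen--Stone lemma, bounding $\P(B_i\cap B_j)\le c_\tau^2\,\P(B_i^{(L)})\,\P(B_{i+1,j}^{(L)})$ via the independence of $\tau$-increments and controlling $\P(B_{i+1,j}^{(L)})$ uniformly in $i$ through Lemma~\ref{l:liminf_up}. This slope-space decoupling, together with the ``plateau'' event linking $\tau(b_j)$ to deterministic times, is the idea missing from your plan.
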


\begin{rem}
Examples of admissible functions $\theta(\cdot)$ include $|\log(\cdot)|^{\alpha}$ with $\alpha \in \mathbb{R}$ and \, $\exp\left( \pm |\log(\cdot)|^{\alpha} \right)$ with $\alpha \in (0,1)$. In contrast to Theorem \ref{t:liminf_low}, the constraint on $\varepsilon(\cdot)$ in this case is bilateral.   
\end{rem}

In the case $\psi(u) = u$, Theorems \ref{t:liminf_low} and \ref{t:liminf_up} together yield the well-known Hirsch integral test \cite{eC78, wH65}. 
\begin{cor} \label{cor:Hirsch}
For any slowly varying function $g(\cdot)$ at infinity satisfying condition \eqref{cond:g3} with $\varkappa = 1$, it holds that
\[
    \liminf_{T \to \infty}\frac{\max_{0 \le t \le T} W(t)}{T^{1/2} g(T)} 
    \stackrel{\text{a.s.}}{=} 
    \begin{cases}
	0, &   \text{if }   \int^{\infty} \frac{g(b)}{b}\, \mathrm{d}b = \infty, \\
	\infty, &   \text{if }  \int^{\infty} \frac{g(b)}{b}\, \mathrm{d}b < \infty.
    \end{cases} 
\] 
\end{cor}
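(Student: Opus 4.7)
The plan is to apply Theorems \ref{t:liminf_low} and \ref{t:liminf_up} with $\varkappa=1$ and $\theta\equiv 1$ (so that $\psi(u)=u$ for $u\ge 0$, which is all that matters since the energy‑minimizing $h$ will be monotone). In the Karamata representation \eqref{e:Kara} one has $c\equiv 1$ and $\varepsilon\equiv 0$, so the hypothesis \eqref{cond:eps_alpha} and its bilateral analogue in Theorem \ref{t:liminf_up} hold trivially, while the growth hypotheses \eqref{cond:t1g1}, \eqref{cond:g3} coincide with the assumption of the corollary (since $2-\varkappa=1$). The normalizing denominator simplifies to
\[
T\,\psi(T^{-1/2})\,g(T)=T^{1/2}g(T),
\]
and the integrability conditions \eqref{cond:t1g2}, \eqref{cond:g2} collapse to the convergence, resp. divergence, of $\int^{\infty} g(b)/b\,\mathrm{d}b$.

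The substantive content is the closed‑form identification
\[
I_W^{\psi}(T,r)=\Bigl(\max_{0\le t\le T}W(t)-r\Bigr)_+.
\]
I would argue this by reduction to monotone approximants: for any $h\in M_{T,r}'$ the running maximum $\tilde h(t):=\sup_{s\le t}h(s)$ is absolutely continuous, nondecreasing, satisfies $\tilde h\ge h\ge W$ and $\tilde h(0)=r$, and has $|\tilde h|_T^\psi=\tilde h(T)-r\le |h|_T^\psi$; hence the infimum is attained on nondecreasing functions. For such $h$, monotonicity forces $h(t)\ge\max(r,\sup_{s\le t}W(s))$, and the lower envelope $h_*(t)=\max\bigl(r,\sup_{s\le t}W(s)\bigr)$ lies in $M_{T,r}'$ and realizes $|h_*|_T^\psi=h_*(T)-r=(\max_{t\le T}W(t)-r)_+$.

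Substituting this explicit formula into the two theorems finishes the argument: the additive constant $r$ is asymptotically negligible compared to $T^{1/2}g(T)\to\infty$ (this divergence follows from \eqref{cond:g3}), so the $\liminf$ of $\max_{0\le t\le T}W(t)/(T^{1/2}g(T))$ coincides with that of $I_W^{\psi}(T,r)/(T^{1/2}g(T))$. Divergence of $\int^{\infty} g(b)/b\,\mathrm{d}b$ then yields $\liminf=0$ by Theorem \ref{t:liminf_up}, and convergence yields $\liminf=\infty$ by Theorem \ref{t:liminf_low}. I do not anticipate a real obstacle; the one point that requires any care is the reduction to monotone $h$ and the resulting explicit minimizer, everything else being routine verification of hypotheses.
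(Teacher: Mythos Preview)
Your overall strategy coincides with the paper's: apply Theorems~\ref{t:liminf_low} and~\ref{t:liminf_up} with $\psi(u)=u$ on $[0,\infty)$, identify $I_W^{\psi}(T,r)$ explicitly as $\max_{t\le T}W(t)-r$ (for large $T$), and absorb the constant $r$ into the diverging denominator. The verification of the hypotheses of the two theorems is correct.

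There is, however, a real gap in your identification of $I_W^{\psi}(T,r)$. Your lower bound is fine: any nondecreasing $h\in M'_{T,r}$ satisfies $|h|_T^\psi=h(T)-r\ge(\max_{t\le T}W(t)-r)_+$. But your claimed minimizer $h_*(t)=\max\bigl(r,\sup_{s\le t}W(s)\bigr)$ is \emph{not} absolutely continuous. By L\'evy's theorem the running maximum $M(t)=\sup_{s\le t}W(s)$ has the same law as Brownian local time at the origin, a singular continuous nondecreasing function: its a.e.\ derivative is zero while the function itself is not constant. Thus $h_*\notin AC[0,T]$, so $h_*\notin M'_{T,r}$, and the formula $|h_*|_T^\psi=h_*(T)-r$ has no meaning (using the a.e.\ derivative would give $0$).

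The paper avoids this by invoking Proposition~\ref{p:maj_form}: the minimizer $\chi_*$ described there is piecewise linear (hence genuinely absolutely continuous), nondecreasing up to the time of the maximum of $W$ and constant afterwards, with $\chi_*(T)=\max_{t\le T}W(t)$ once that maximum exceeds $r$; then $|\chi_*|_T^\psi=\int_0^T\chi_*'(t)\,\mathrm{d}t=\chi_*(T)-r$ follows at once. To repair your argument you must exhibit an admissible nondecreasing $h$ with $h(T)=h_*(T)$, and the concave-majorant construction of Proposition~\ref{p:maj_form} is precisely the tool that supplies one.
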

\medskip

We now describe the behavior of the {\it upper} limit of $I_W^{\psi}(T,r)$ as $T \to \infty$. The following theorem provides maximal necessary energy values required for approximating $W(\cdot)$ from some sufficiently large time onward, and characterizes  energy levels that may be insufficient.

\begin{thm} \label{t:limsup}
Let $\psi(\cdot)$ be a convex function of the form \eqref{cond:reg} with $\varkappa \in [1,2)$, and suppose $\theta(\cdot)$ admits the Karamata representation \eqref{e:Kara} for $u \in (0, 1)$. Assume further that there exists $\alpha \in (0,1)$ such that, for all sufficiently small $v > 0$,
\[
   \varepsilon(v) > - \frac{1}{|\log v|^{\alpha}}.
\]
Then, for any $r > 0$, it holds almost surely that
\[
   \frac{2^{-\varkappa / 2}}{1 - \varkappa / 2}
	\ge \limsup\limits_{T \to \infty} 
   \frac{I_{W}^{\psi}(T,r)}{T\psi \left( \sqrt{ \frac{\log \log T}{T} } \right) } 
   \ge 2^{\varkappa / 2}. 
\]
\end{thm}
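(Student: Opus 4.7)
My plan is to first invoke the reduction established in Section~\ref{sec:2}: the infimum defining $I_{W}^{\psi}(T,r)$ is attained by the minimal concave majorant $\overline{W}^{(r)}$ passing through $(0,r)$, whose derivative $S^{(r)} := (\overline{W}^{(r)})'$ is monotone non-increasing, so that $I_{W}^{\psi}(T,r) = \int_{0}^{T}\psi(S^{(r)}(t))\, \mathrm{d}t$. Since the shift $r>0$ affects $\overline{W}^{(r)}$ only on an initial segment contributing $O(1)$ to the energy, it is asymptotically irrelevant, and the task reduces to pathwise control of the $\psi$-integrated slope of the concave majorant of $W$ on $[0,T]$.

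For the lower bound $\limsup \ge 2^{\varkappa/2}$, I would use Jensen's inequality. Since $\overline{W}^{(r)}(T) = W(T)$ and $\overline{W}^{(r)}(0) = r$, the mean slope equals $(W(T)-r)/T$, whence convexity of $\psi$ gives $I_{W}^{\psi}(T,r) \ge T\,\psi((W(T)-r)/T)$. By Khinchin's law of the iterated logarithm, almost surely there is a random sequence $T_{n}\uparrow\infty$ with $W(T_{n})\ge(1-\delta)\sqrt{2T_{n}\log\log T_{n}}$ for any prescribed $\delta>0$. Regular variation of $\psi$, combined with the Karamata representation \eqref{e:Kara} and the hypothesis $\varepsilon(v) > -|\log v|^{-\alpha}$ (which provides the quantitative convergence $\theta(au)/\theta(u) \to 1$ on bounded ranges of $a$), then yields
\[
	\psi\!\left(\tfrac{W(T_{n})-r}{T_{n}}\right) \;\ge\; \bigl(2^{\varkappa/2}-o(1)\bigr)\,\psi\!\left(\sqrt{\tfrac{\log\log T_{n}}{T_{n}}}\right),
\]
which yields the lower bound along $\{T_{n}\}$.

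For the upper bound $\limsup \le \tfrac{2^{-\varkappa/2}}{1-\varkappa/2}$, I would combine the Groeneboom-based description of the linear pieces $(\ell_{i},\sigma_{i})$ of $\overline{W}^{(r)}$ from Section~\ref{sec:2} with sharp a.s.\ slope bounds deduced from the LIL applied to the Brownian chords realizing each piece. The key point is that monotonicity of $S^{(r)}$, together with the fact that vertices of the concave majorant arise as arg-maxima of $W(s)-\sigma s$, constrains the slope of a piece of length $\ell$ to be at most $\sqrt{(\log\log T)/(2\ell)}\,(1+\delta)$, rather than the much larger L\'evy-modulus scale $\sqrt{\log(T/\ell)/\ell}$ that would otherwise apply to short Brownian increments. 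Substituting this bound into the energy and using the regular variation of $\psi$ (with the slowly varying $\theta$ uniformly controlled via the Karamata hypothesis on $\varepsilon$) yields, after summing over pieces with the help of Groeneboom's point-process intensity of piece lengths, a bound of the form $(1+o(1))\,\tfrac{2^{-\varkappa/2}}{1-\varkappa/2}\,T\,\psi(\sqrt{\log\log T/T})$; the convergence of the resulting scale-sum depends crucially on the assumption $\varkappa<2$.

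The principal obstacle is pinning down the sharp constant $\tfrac{2^{-\varkappa/2}}{1-\varkappa/2}$ rather than a bound $2^{\varkappa}$ times larger. A naive dyadic slope bound of the form $|S^{(r)}(t)|\le\sqrt{2\log\log T/t}$ applied uniformly overshoots by exactly $2^{\varkappa}$; the remedy is to adapt the slope bound to the \emph{piece length} rather than the position $t$, which requires delicate use of both the monotonicity of $S^{(r)}$ and Groeneboom's joint description of vertex locations and slopes. A secondary technical point is the uniform control of $\theta(\cdot)$ over the continuum of piece scales (ranging from $o(T)$ down to arbitrarily small values), which is precisely the role of the one-sided hypothesis $\varepsilon(v)>-|\log v|^{-\alpha}$ in the theorem.
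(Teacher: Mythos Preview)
Your lower bound is essentially the paper's argument: Jensen plus the law of the iterated logarithm. One small correction: it is not true that $\overline{W}^{(r)}(T)=W(T)$ in general (the majorant may sit strictly above $W$ at the right endpoint). What you need, and what the paper uses, is simply $\chi_*(T)\ge W(T)$, so that $\psi((\chi_*(T)-r)/T)\ge\psi((W(T)-r)/T)$ by monotonicity of $\psi$ on $[0,\infty)$. With that fix the lower bound goes through.

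For the upper bound, the paper takes a much simpler route than you propose, and your route has a real gap at exactly the point you flag. The paper does \emph{not} analyse the optimal majorant at all. Since $I_W^\psi(T,r)$ is an infimum, any admissible $h\in M'_{T,r}$ gives an upper bound. The paper takes the explicit test function
\[
   h(t)=(1+\delta)\sqrt{2t\log\log t}\qquad(t\text{ large}),
\]
patched on an initial random segment $[0,T_\circ]$ by the local minimiser there. By the upper LIL this $h$ is a.s.\ eventually a majorant of $W$, and its derivative is $(1+\delta)\sqrt{\log\log t/(2t)}\,(1+o(1))$. A direct computation of $\int_0^T\psi(h'(t))\,\mathrm{d}t$, using the Karamata condition on $\varepsilon$ only to control the ratio $\theta(h'(t))/\theta(\sqrt{\log\log T/(2T)})$, produces the sharp constant $2^{-\varkappa/2}/(1-\varkappa/2)$ with no further effort.

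Your plan, by contrast, tries to bound the energy of the \emph{optimal} majorant via Groeneboom's piece decomposition. The ``principal obstacle'' you identify is genuine: the pathwise slope bound one extracts from Lemma~\ref{l:tau_est} is $S^{(r)}(t)\le\sqrt{(2+\delta)\log\log t/t}$, which after integration overshoots by exactly the factor $2^{\varkappa}$ you mention. Your proposed remedy --- bounding the slope of a piece of length $\ell$ by $\sqrt{(\log\log T)/(2\ell)}$ --- is not justified and is not in general correct: in Groeneboom's description a piece with slope $1/a$ has length equal to a jump of $\tau$ at level $a$, whose distribution concentrates around $a^2$ with heavy tails, so there is no uniform relation of the form $\sigma\le C\sqrt{\log\log T/\ell}$ with the needed constant. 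Even if one could make this precise, it would require substantially more work than the one-line test-function argument the paper actually uses.
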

\begin{rem}
Note that the constraint on $\varepsilon(\cdot)$ is unilateral, regulating only the possible decay of $\theta(u)$ as $u \downarrow 0$. The theorem imposes no restrictions on the possible growth of $\theta(\cdot)$ in the same limit. 
\end{rem}
\begin{example}
For $\psi(u) = u^{\varkappa}$ with $\varkappa \in [1, 2)$ and any $r > 0$, it holds almost surely that
\[
	\frac{2^{-\varkappa / 2}}{1 - \varkappa / 2} \ge \limsup\limits_{T \to \infty} \frac{I_{W}^{\psi}(T,r)}{T^{1-\varkappa/2} (\log\log T)^{\varkappa/2} }\ge  2^{\varkappa / 2}. 
\] 
\end{example}

\subsection{Results for $\varkappa \ge 2$} \label{sec:1.3}
	
In contrast to the case $\varkappa \in [1,2)$, when $\varkappa \ge 2$, the minimal energy $I_W^{\psi}(T,r)$ is almost surely either bounded on the entire positive real axis or exhibits an unbounded deterministic asymptotic growth.
The key quantity in this alternative is	
\[
	m_{\psi}(b) := \int^{b}_{1} a \psi(a^{-1}) \mathrm{d}a, \quad b \ge 1.
\] 
The conditions that determine the behavior of $I_{W}^{\psi}(T,r)$ are
\begin{eqnarray}		\label{cond:finite}
		\lim_{b \to \infty}  m_{\psi}(b) < \infty, 
\\		\label{cond:infinite}
	\lim_{b \to \infty} m_{\psi}(b) = \infty.
\end{eqnarray}
Due to the monotonicity of $m_{\psi}(\cdot)$, exactly one of these conditions holds.

\subsubsection*{Finite energy case}  

It turns out that the boundedness of $m_{\psi}(\cdot)$ implies the boundedness of 
$I_{W}^{\psi}(\cdot ,r)$.
	
\begin{thm} \label{t:finite}
Let $r > 0$, and let $\psi(\cdot)$ be a convex function attaining its minimal value 0 at zero. If condition \eqref{cond:finite} holds, then, almost surely, the random variable $I_{W}^{\psi}(T,r)$ is uniformly bounded in $T$ and increases monotonically to its finite limit as $T \to \infty$.
\end{thm}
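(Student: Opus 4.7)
Monotonicity of $I_W^\psi(\cdot,r)$ is immediate and I would dispatch it first: restricting any $h\in M'_{T_2,r}$ (with $T_1\le T_2$) to $[0,T_1]$ yields an element of $M'_{T_1,r}$ whose energy on $[0,T_1]$ can only be smaller, so $I_W^\psi(T_1,r)\le I_W^\psi(T_2,r)$. It therefore suffices to produce almost surely a single admissible approximation whose integral energy over the whole half-line $[0,\infty)$ is finite; that function is then a uniform upper bound, and the convergence of the non-decreasing quantity $I_W^\psi(T,r)$ to a finite limit follows automatically.

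The natural candidate is $\orr$, the minimal concave majorant of $W(\cdot)$ on $[0,\infty)$ starting at height $r$. To see that it exists almost surely I would argue that the family of concave functions $h$ on $[0,\infty)$ with $h(0)=r$ and $h\ge W$ is non-empty: for any $s_0>0$ the random variable $\sup_{t\ge 0}(W(t)-s_0 t)$ is a.s.\ finite and tends to $0$ as $s_0\to\infty$, so for sufficiently large random $s_0$ the linear function $t\mapsto r+s_0 t$ lies in the family. The pointwise infimum of this non-empty family is then itself concave, starts at $r$, and dominates $W$; as a concave function it is absolutely continuous on every compact interval, hence $\orr\in M'_{T,r}$ for every $T>0$ and
\[
  I_W^\psi(T,r)\;\le\;|\orr|_T^\psi\;\le\;E_\infty\;:=\;\int_0^\infty \psi\!\bigl((\orr)'(t)\bigr)\,\mathrm dt.
\]

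The heart of the proof is the claim $E_\infty<\infty$ almost surely under \eqref{cond:finite}. The function $\orr$ is piecewise linear with vertices $0=\tau_0<\tau_1<\tau_2<\cdots$ at which it meets $W$ and with strictly decreasing slopes $s_1>s_2>\cdots\downarrow 0$, so $E_\infty=\sum_{i\ge 1}\psi(s_i)(\tau_i-\tau_{i-1})$. Using the Poissonian description of the joint law of the pairs $(\tau_i,s_i)$ furnished by Section \ref{sec:2} (following Groeneboom \cite{pG83}), Brownian scaling $\tau\asymp s^{-2}$ along the concave majorant suggests that slopes of size near $s$ contribute a duration of order $s^{-3}\mathrm ds$. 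I would split the sum at $s_i=1$: the small-slope part satisfies
\[
   \mathbb E\sum_{i:\,s_i\le 1}\psi(s_i)(\tau_i-\tau_{i-1})\;\le\;C\int_0^1 \frac{\psi(s)}{s^3}\,\mathrm ds,
\]
and the change of variables $a=1/s$ identifies the right-hand side with $\lim_{b\to\infty}m_\psi(b)$, which is finite by \eqref{cond:finite}; the large-slope part involves only finitely many indices (since $s_i\downarrow 0$), each with a.s.\ finite duration and a.s.\ finite $\psi(s_i)$, so it is almost surely finite.

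The hard part will be converting the heuristic $\sum_i\psi(s_i)(\tau_i-\tau_{i-1})\asymp\int\psi(s)s^{-3}\,\mathrm ds$ into a rigorous inequality using the explicit intensity of the slope–vertex point process from Section \ref{sec:2}, and doing so uniformly enough in the random initial slope $s_1$ that the integrability condition on $\psi$ near zero (via \eqref{cond:finite}) is the only analytic input required, with no side hypothesis on the tail behaviour of $\psi$.
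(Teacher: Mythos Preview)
Your approach is correct and is in substance the same as the paper's: both reduce the question to the almost-sure finiteness of $\int \psi(\oww^{\,\prime}(t))\,\mathrm dt$ over the tail, which follows from the Groeneboom Poisson description and the finiteness of $m_\psi(\infty)=\int_0^1\psi(s)s^{-3}\,\mathrm ds$. The paper's execution is considerably shorter because it works with $L_\psi(b)=\int_{(1,b]}\psi(a^{-1})\,\mathrm d\tau(a)$ rather than with $\orr$ directly: by the intensity \eqref{e:mu_def} one gets $\mathbb{E}L_\psi(b)=m_\psi(b)$ as an exact identity (equation \eqref{e:E_m}), so $L_\psi(\infty)<\infty$ a.s.\ immediately, and Proposition~\ref{p:L_I} transfers this to $I_W^\psi$. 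Thus what you call ``the hard part'' --- converting the $\tau\asymp s^{-2}$ heuristic into a rigorous bound --- is in fact a one-line first-moment computation for a Poisson integral, not an inequality at all, and no uniformity in the initial slope is needed. Your choice to work with $\orr$ instead of $\oww$ introduces the extra bookkeeping you worry about (the random initial tangent slope $s_1$ and its duration $\mathcal T(r)$), but this is precisely what Proposition~\ref{p:L_I} already packages as an a.s.\ finite additive constant; once you split off that first affine piece, the remaining integral is bounded by $L_\psi(\infty)$ plus an a.s.\ finite term over $[\min(\mathcal T(r),\tau(1)),\max(\mathcal T(r),\tau(1))]$, and no separate ``large-slope'' argument is required.
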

	\begin{example}
The functions $\psi(u)=u^{\varkappa}$ with $\varkappa > 2$, as well as $\psi(u) = u^2 |\log u|^{\alpha}$ with $\alpha < -1$, satisfy condition \eqref{cond:finite}, along with functions $\psi(\cdot)$ that decay to 0 faster as $u \downarrow 0$.	
	\end{example}	

\subsubsection*{Exact asymptotics case}      
In the case where $m_\psi(\cdot)$ is unbounded, the exact asymptotic growth of $I_{W}^{\psi}(\cdot ,r)$ can be determined.

\begin{thm} \label{t:infinite}
Let $\psi(\cdot)$ be a convex function of the form \eqref{cond:reg} with $\varkappa = 2$ and some function $\theta(\cdot)$, which is not necessarily slowly varying. Suppose that for some $\alpha^{\diamond}, \alpha_\diamond$, one of the following conditions holds: 
\begin{equation} \label{cond:alpha}
	-\frac{1}{2} < \alpha^{\diamond}, \quad \alpha_\diamond \le \alpha^{\diamond} < \alpha_\diamond+ \frac{1}{3},
\end{equation}
or
\begin{equation} \label{cond:alpha_alt}
	-1 < \alpha^{\diamond} \le -\frac{1}{2}, \quad \alpha_\diamond\le \alpha^{\diamond} < 3\alpha_\diamond+2.
\end{equation}
Assume that
\begin{equation} \label{cond:theta}
    |\log u|^{\alpha_{\diamond}} \ll \theta(u) \ll |\log u|^{\alpha^{\diamond}}, \quad u \to 0.
\end{equation}
If condition \eqref{cond:infinite} is satisfied, then for any $r > 0$, it holds that 
		\[
		\lim\limits_{T \to \infty} \frac{I_W^{\psi}(T,r)}{m_{\psi}\left(T^{1/2}\right)} \stackrel{\text{a.s.}}{=} 1 \, . 
		\]	
	\end{thm}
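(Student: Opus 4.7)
My plan is to reduce $I_W^\psi(T,r)$ to the energy of the least concave majorant (LCM) of $W$ on $[0,T]$ starting at height $r$, compute its first moment using Groeneboom's theory, establish a variance bound, and pass to almost sure convergence via monotonicity. By Section \ref{sec:2}, $I_W^\psi(T,r) = \int_0^T \psi(\oww_T'(t))\,\mathrm{d}t$, where $\oww_T$ denotes this LCM. Since $\oww_T$ is piecewise linear with a decreasing sequence of slopes, the heuristic picture is: at scale $t \asymp T \cdot 2^{-k}$ (for $k = 0, 1, \dots, \log_2 T$), the majorant has $O(1)$ segments of length $\asymp t$ and slope $\asymp t^{-1/2}$, each contributing energy $\asymp t \cdot \psi(t^{-1/2}) = \theta(t^{-1/2})$. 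Summing over scales gives
\[
    \sum_{k} \theta(2^{-k/2}) \;\asymp\; \int_{T^{-1/2}}^{1} \frac{\theta(u)}{u}\,\mathrm{d}u \;=\; m_\psi\!\left(T^{1/2}\right),
\]
which is the target asymptotic.

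To execute this, I first compute $\E[I_W^\psi(T,r)]$ using Groeneboom's formulas (invoked in Section \ref{sec:2}) for the joint distribution of slopes and lengths of the LCM. Combined with the Karamata representation \eqref{e:Kara}, the divergence hypothesis \eqref{cond:infinite}, and the two-sided polylogarithmic bounds \eqref{cond:theta}, this should yield
\[
    \E\bigl[I_W^\psi(T,r)\bigr] \;=\; m_\psi\!\left(T^{1/2}\right)\bigl(1+o(1)\bigr).
\]
The boundary contributions from slopes much smaller than $T^{-1/2}$ (from the tail of the LCM near $t \asymp T$) and from slopes of order $1$ (from the boundary layer near $t \asymp 1$, where the shift by $r$ matters) must be shown to be of lower order; this is where condition \eqref{cond:theta} does its main work.

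The main obstacle is the variance bound $\mathrm{Var}\bigl(I_W^\psi(T,r)\bigr) = o\bigl(m_\psi(T^{1/2})^2\bigr)$. The two alternative windows \eqref{cond:alpha} and \eqref{cond:alpha_alt} for $(\alpha_\diamond, \alpha^\diamond)$ enter precisely here: I would decompose the slope axis dyadically into $O(\log T)$ scales, estimate the per-scale variance and the cross-scale covariances using the weak Markov / renewal structure of Groeneboom's point process of vertices, and verify that both are dominated by $m_\psi(T^{1/2})^2$ exactly under \eqref{cond:alpha}--\eqref{cond:alpha_alt}. The delicate point is that larger $\alpha^\diamond - \alpha_\diamond$ allows more oscillation of $\theta$, which inflates both the per-scale variance and the cross-scale covariances; the admissible windows quantify how much oscillation can be tolerated before the fluctuations cease to be $o(m_\psi(T^{1/2}))$ and the deterministic asymptotic fails.

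Finally, Chebyshev's inequality together with the variance bound and Borel--Cantelli, applied along an exponentially spaced subsequence $T_k = q^k$ with $q > 1$, yields $I_W^\psi(T_k,r)/m_\psi(T_k^{1/2}) \to 1$ almost surely. Since $T \mapsto I_W^\psi(T,r)$ is monotone nondecreasing (a longer time interval imposes a strictly stronger constraint on admissible $h$) and $m_\psi(T^{1/2})$ is slowly varying in $T$, a standard sandwich argument with $q \downarrow 1$ extends the almost sure convergence to the full limit as $T \to \infty$.
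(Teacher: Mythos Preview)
Your overall architecture (mean $\sim m_\psi$, variance bound, Chebyshev/Borel--Cantelli along a subsequence, monotone sandwich) is the paper's, but the subsequence step has a real gap and you have misread where conditions \eqref{cond:alpha}--\eqref{cond:alpha_alt} enter. The paper works not with $I_W^\psi(T,r)$ directly but with the Poisson integral $L_\psi(b)=\int_{(1,b]}\psi(a^{-1})\,\mathrm d\tau(a)$; Groeneboom's representation gives the \emph{exact} formulas $\E L_\psi(b)=m_\psi(b)$ and $\mathbb D\,L_\psi(b)=3\int_1^b a^3\psi(a^{-1})^2\,\mathrm da$, so no dyadic decomposition or cross-scale covariance estimate is needed at all. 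Under \eqref{cond:theta} this yields $q(b):=\mathbb D L_\psi(b)/(\E L_\psi(b))^2=O((\log b)^{-\sigma})$ with, in case \eqref{cond:alpha}, $\sigma=1-2(\alpha^\diamond-\alpha_\diamond)\in(1/3,1]$. Along your geometric subsequence $T_k=q^k$ one then has $q(T_k^{1/2})\asymp k^{-\sigma}$, and $\sum_k k^{-\sigma}=\infty$ for every $\sigma\le 1$; in particular Borel--Cantelli fails even in the basic case $\psi(u)=u^2$, where $\sigma=1$. Slow variation of $m_\psi$ does not rescue the argument, because the fix is to \emph{thin} the subsequence, not to refine it.

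The paper's remedy is to take $b_n=\exp(n^\gamma)$ with $\gamma>1/\sigma$, which makes $\sum_n q(b_n)$ converge. But then the sandwich needs $m_\psi(b_{n+1})/m_\psi(b_n)\to 1$, and since $b_{n+1}/b_n\to\infty$ while one only knows $(\log b)^{1+\alpha_\diamond}\ll m_\psi(b)\ll(\log b)^{1+\alpha^\diamond}$, this constrains $\gamma$ from above. Conditions \eqref{cond:alpha}--\eqref{cond:alpha_alt} are precisely the compatibility condition ensuring that some $\gamma$ satisfies both requirements simultaneously; they govern this subsequence trade-off, not any covariance structure. Finally, the transfer from $L_\psi$ to $I_W^\psi$ is not done by computing $\E I_W^\psi(T,r)$: Proposition \ref{p:L_I} shows $L_\psi(b)-I_W^\psi(\tau(b),r)$ is eventually constant, and a second sandwich using the a.s.\ bounds $\tau\bigl(T^{1/2}/\log\log T\bigr)<T<\tau\bigl(T^{1/2}(\log T)^4\bigr)$ from Lemma \ref{l:tau_est}, together with $m_\psi(b_T^\pm)/m_\psi(T^{1/2})\to 1$, closes the argument.
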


\begin{example}
The functions $\psi(u)=u^2|\log u|^{\alpha} \, \Upsilon(|\log u|)$ with $\alpha > -1$ and some slowly varying function $\Upsilon(\cdot)$ at infinity satisfy condition \eqref{cond:infinite}. For $\Upsilon(\cdot) \equiv 1$, by Theorem \ref{t:infinite}, for any $r > 0$, it holds that
\[
   \lim_{T \to \infty}\frac{I_{W}^\psi(T,r)}{\left( \log T \right)^{1+\alpha}} \stackrel{\text{a.s.}}{=} \frac{1}{(1+\alpha)2^{1+\alpha}}. 
\] 
The special case $\alpha=0$ and $\Upsilon(\cdot) \equiv 1$ (kinetic energy) is considered in \cite{LN24}.
\end{example}


\section{Preliminaries} \label{sec:2}
\subsection{Concave majorants as energy-efficient approximations} \label{subsec:aux1}

The energy-efficient approximation under a unilateral constraint can be characterized in terms of the minimal concave majorant (MCM) of the approximated function and does not depend on $\psi(\cdot)$.  
Let $w:[0,T]\mapsto \R$ be a continuous function. Then the corresponding  
MCM $\ow(\cdot)$ is the smallest concave function satisfying the condition
\[
\ow (t) \ge w(t), \qquad 0\le t\le T.
\]
\begin{prop} \label{p:maj_form} 
Let $r > w(0)$ and let $\psi(\cdot)$ be a convex function attaining its minimal value 0 at zero. Assume that, for each $\delta > 0$, $\ow(\cdot)$ is a piecewise linear function with a finite number of non-decreasing segments on $[\delta, T]$. Then the problem $|h|_T^\psi\to \min$ under the constraints
$h(0)=r$ and
\[
     h(t) \ge w(t), \qquad 0\le t\le T,
\]
admits a solution $\chi^{\text{ }}_*(\cdot)$ of the following form.

(a) If $r \ge \max_{0\le t\le T} w(t)$, then $\chi^{\text{ }}_*(t)\equiv r$.

(b) If $r < \max_{0\le t\le T} w(t)$, then $\chi^{\text{ }}_*(\cdot)$ is defined differently on three segments. On the initial segment, $\chi^{\text{ }}_*(\cdot)$ is an affine function whose graph passes through the point $(0,r)$ and is tangent to the graph of $\ow(\cdot)$.  
Then $\chi^{\text{ }}_*(\cdot)$ coincides with $\ow(\cdot)$ until the first time the maximum of $w(\cdot)$ is attained.  
Finally, after this point, $\chi^{\text{ }}_*(\cdot)$ remains constant.

Moreover, if $\psi(\cdot)$ is strictly convex, then this solution is unique.
\end{prop}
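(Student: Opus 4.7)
Part (a) is immediate: under $r\ge \max_{[0,T]} w$ the constant function $h\equiv r$ lies in the admissible class and attains energy $T\psi(0)=0$, which is minimal since $\psi\ge 0$; strict convexity of $\psi$ immediately yields uniqueness, because $\int\psi(h')=0$ then forces $h'\equiv 0$.

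For part (b), the first task is to build the candidate $\chi_{*}$ geometrically. I would record the following facts about $\overline{w}$ under the stated hypotheses: since the constant $\max w$ is a concave majorant of $w$, one has $\overline{w}\le \max w$; combining with $\overline{w}(t^{*})\ge w(t^{*})=\max w$ (where $t^{*}$ is the first time $w$ attains its maximum) yields $\overline{w}(t^{*})=\max w$, and by concavity $\overline{w}$ is non-decreasing on $[0,T]$ and constant on $[t^{*},T]$. Since $r\in(\overline{w}(0),\overline{w}(t^{*}))$, a tangent to the concave curve $\overline{w}$ from the point $(0,r)$ exists, touching at some $t_{1}\in(0,t^{*}]$, and under the piecewise-linear hypothesis $t_{1}$ can be chosen at a kink of $\overline{w}$. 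Any kink of the MCM of a continuous function is necessarily a contact point $\overline{w}=w$ (otherwise one could locally lower the kink, contradicting minimality of $\overline{w}$). The candidate $\chi_{*}$ is then concave, satisfies $\chi_{*}(0)=r$, and lies above $w$ on $[0,T]$ (the tangent lies above $\overline{w}$, hence above $w$, on $[0,t_{1}]$ by concavity).

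For optimality, let $h$ be any admissible competitor. The derivative $\chi_{*}'$ is non-increasing (the tangent slope $s_{1}$ on $[0,t_{1}]$, the decreasing piecewise-constant slopes of $\overline{w}$ on $[t_{1},t^{*}]$, and $0$ on $[t^{*},T]$), so there exists a non-increasing measurable selection $p(t)\in\partial\psi(\chi_{*}'(t))$; the minimality of $0$ for $\psi$ gives $0\in\partial\psi(0)$, so we may take $p\equiv 0$ on $[t^{*},T]$. The subgradient inequality $\psi(h')\ge \psi(\chi_{*}')+p(h'-\chi_{*}')$ reduces the problem to showing
\[
\int_{0}^{T}p(t)\bigl(h'(t)-\chi_{*}'(t)\bigr)\,dt\;\ge\;0.
\]
Integration by parts in the Stieltjes sense (valid because $p$ has bounded variation, $\overline{w}$ being piecewise linear with finitely many pieces away from $0$) gives
\[
\int_{0}^{T}p\,d(h-\chi_{*})\;=\;\bigl[p(h-\chi_{*})\bigr]_{0}^{T}-\int_{(0,T]}(h-\chi_{*})\,dp,
\]
and the boundary term vanishes because $h(0)-\chi_{*}(0)=0$ by construction and $p(T)=0$ by the choice above. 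The measure $-dp$ is non-negative and supported on the slope-jump points of $\chi_{*}'$, namely the kinks of $\overline{w}$ inside $[t_{1},t^{*}]$ together with $t^{*}$; at each such point $\tau$ we have $\chi_{*}(\tau)=\overline{w}(\tau)=w(\tau)\le h(\tau)$, so the remaining integral is non-negative and the energy inequality follows.

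The step I expect to be the main obstacle is this matching of the support of $-dp$ with the contact set $\{\chi_{*}=w\}$, because it ties together three independent pieces of structure: the MCM contact property at interior kinks, the choice of $t_{1}$ at a kink (needed to handle the possible jump of $p$ at $t_{1}$), and the identification $\chi_{*}(t^{*})=\max w=w(t^{*})$. Once this is in place, uniqueness under strict convexity of $\psi$ follows directly: the subgradient inequality becomes strict whenever $h'(t)\ne \chi_{*}'(t)$, so equality of the energies forces $h'=\chi_{*}'$ a.e., and $h(0)=\chi_{*}(0)=r$ then gives $h\equiv \chi_{*}$.
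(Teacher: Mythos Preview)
Your argument is correct and takes a genuinely different route from the paper's. The paper proceeds by contradiction and reduction: given a competitor $\chi_{\circ}$ with smaller energy, it first passes to the decreasing rearrangement of $\max\{\chi_{\circ}',0\}$ to obtain a concave non-decreasing $\chi$ with $\chi\ge\chi_*$ and $|\chi|_T^\psi\le|\chi_{\circ}|_T^\psi$; it then writes $\chi_*=\min_{1\le k\le k_*}f_k$ as a finite minimum of affine functions and shows, by a single application of Jensen's inequality on each cutting interval, that successively replacing $\chi$ by $\min\{\chi,f_1\},\min\{\chi,f_1,f_2\},\ldots$ never increases the energy, ending at $\chi_*$ and yielding the contradiction. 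Your approach is instead a direct verification of optimality via the subgradient inequality and a Stieltjes integration by parts, i.e.\ essentially checking the KKT conditions: the complementary-slackness ingredient is exactly your observation that the support of $-\mathrm{d}p$ sits on kinks of $\overline{w}$, which are contact points $\overline{w}=w$. Your proof is arguably more conceptual and extends more transparently to non-piecewise-linear $\overline{w}$ (one would just need $p$ of bounded variation), while the paper's argument is more elementary in that it uses only rearrangement and Jensen, avoiding subdifferentials and integration by parts altogether. Both handle uniqueness in the same way, via strict convexity.
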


\begin{proof}[ of the proposition]
Case (a) is trivial, so consider case (b).  
Suppose there exists ${\chi^{\text{ }}_{\circ}}(\cdot)$ such that $| \chi^{\text{ }}_{\circ} |_T^\psi < | \chi^{\text{ }}_{*}  |_T^\psi  $. Define the function
\[
\chi(t):= r +\int_0^t \vec{\chi}^{\text{ }\prime}_{\circ}(\vartheta) \mathrm{d}\vartheta, \qquad 0\le t \le T,
\]
where $\vec{\chi}^{\text{ }\prime}_{\circ}(\cdot)$ is the decreasing rearrangement of the function $\max\{\chi^{\prime}_{\circ}(\cdot),0\}$.  
By construction, $\chi(\cdot)$ is a concave non-decreasing function, with $\chi(0)=r$ and 
$\chi(t) \ge \chi^{\text{ }}_{\circ}(t) \ge w(t)$ for all $t\in[0,T]$. Therefore, $\chi(\cdot)$ satisfies the problem's constraints.  
On the other hand,
\[
|\chi|_T^\psi=\int_0^T \psi( \vec{\chi}^{\text{ }\prime}_{\circ}) \mathrm{d}t = \int_0^T  \psi(\max\{\chi_{\circ}^{\prime}(t),0\}) \mathrm{d}t 
\le  |\chi^{\text{ }}_{\circ}|_T^{\psi} < | \chi^{\text{ }}_{*} |_T^\psi. 
\]
Since $\chi^{\text{ }}_*(\cdot)$ is the minimal convex non-decreasing function satisfying the constraints of the problem, we have
\[
\chi(t)\ge \chi^{\text{ }}_*(t), \qquad 0\le t\le T.
\]
Let $k_{*}$ be the number of affine components composing $\chi^{\text{ }}_{*}(\cdot)$, finite by assumption.  
Let $\{f_{k}(\cdot)\}_{k=1}^{k_{*}}$ be affine functions on $[0,T]$ such that
\[
\chi^{\text{ }}_{*}(t) = \min_{1 \le k \le k_{*}}\{f_{k}(t)\}, \qquad 0\le t\le T.
\]
For $k_{*} \ge k \ge 1$ and $t \in [0,T]$, denote $\chi^{\text{ }}_0(t):= \chi(t)$ and 
\[
\chi^{\text{ }} _{k}(t) := \min\left\{ \min_{1 \le i \le k}\{f_{i}(t)\}, \chi(t) \right\}.
\]
For $k_{*} > k \ge 0$, we have 
\[
\chi^{\text{ }} _{k+1}(t) = \min \left\{ \chi^{\text{ }}_{k}(t), f_{k+1}(t) \right\}, \qquad 0\le t\le T.
\]
We will prove that for $k_{*} > k \ge 0$, it is true that
\[
|\chi^{\text{ }}_{k + 1}|_T^\psi \le |\chi^{\text{ }}_{k}|_T^\psi.
\]
Let $[t_{1},t_{2}] \subset [0,T]$ be the maximal interval on which $\chi^{\text{ }}_{k}(\cdot) \ge f _{k+1}(\cdot)$. Due to the convexity of $\chi^{\text{ }}_{k}(\cdot)$, this interval is well-defined.

Using Jensen's inequality, we obtain
\begin{eqnarray*}
			&& (t_2-t_1) \int_{t_1}^{t_2} \frac{1}{t_2-t_1}\psi(\chi^{\prime}_{k}(t)) \mathrm{d}t
            \ge 
			(t_2-t_1)\psi \left(\int_{t_1}^{t_2} \frac{1}{t_2-t_1} \chi^{\prime }_{k}(t)\mathrm{d}t\right)
\\
			&=& (t_2-t_1)\psi \left(\frac{\chi^{\text{ }}_{k}(t_2)-\chi^{\text{ }}_{k}(t_1)}{t_2-t_1}\right)
			\ge   (t_2-t_1)\psi 
                \left(\frac{f_{k+1}(t_2)-f_{k+1}(t_1)}{t_2- t_1}\right) 
\\
			&=&  \int_{t_1}^{t_2} \psi(f_{k+1}'(t)) \mathrm{d}t.
\end{eqnarray*}
Thus, we have
\[
		\int_{t_1}^{t_2} \psi(\chi^{\prime}_{k}(t)) \mathrm{d}t \ge  \int_{t_1}^{t_2} \psi(f_{k+1}'(t)) \mathrm{d}t.		
\]
It follows that $\chi^{\text{ }}_{k+1}(\cdot)$ satisfies the problem's constraints and  
$|\chi^{\text{ }}_{k+1}|_T^\psi \le  |\chi^{\text{ }}_k|_T^\psi$.
Hence, we obtain that $| \chi |_T^\psi \ge | \chi^{\text{ }}_{k_{*}} |_T^\psi = | \chi^{\text{ }}_{*} |_T^\psi $,  
which leads to a contradiction.  

If $\psi(\cdot)$ is strictly convex, then the energy functional $| \cdot |_T^\psi$ is also strictly convex, ensuring that $\chi^{\text{ }}_{*}(\cdot)$ is the unique minimizer of $| \cdot |_T^\psi$.
\end{proof}


\subsection{Simplified representation of the minimal integral energy}
For any function $f: \R \to \R$ and any set $F \subset \R$, define
\[
	\argmax_{t \in F} f(t) :=  \sup \left\{t_0 \in F \, \big| \, f(t_0)=\sup_{t \in F}f(t)\right\}.
\] 
Consider the stochastic process  
	\begin{equation} \label{def:tau}
	\tau(a) := \argmax_{t \ge 0}\, (W(t) - t/a), \qquad a > 0.
	\end{equation}
According to \cite[Theorem~2.1]{pG83}, it admits the following representation 
\begin{equation} \label{e:tau_int}
\tau(a) = \int\limits_{0+}^{\infty} l \, N([0,a]\times \mathrm{d}l), \qquad a > 0,
\end{equation}
where $N(\mathrm{d}a\times \mathrm{d}l)$ is a Poisson random measure with intensity measure 
\begin{equation} \label{e:mu_def}
\mu(\mathrm{d}a\times \mathrm{d}l) = (a^{2}\sqrt{ l })^{-1}\varphi(\sqrt{ l }/a)\, \mathrm{d}a \, \mathrm{d}l, \qquad a>0, \, l>0,
\end{equation}
and $\varphi(\cdot )$ denotes the standard normal density function.
\\
We now present several estimates for $\tau(\cdot)$ that will be used later. 
	
\begin{lem} \label{l:tau_est}
		Let $\delta > 0$, and let $\rho(\cdot )$ be a slowly varying function at infinity, such that $\rho(b) < 1$ for every $b > 0$ and
	\begin{equation} \label{cond:rho}
	\int^{\infty} \frac{\rho(b)^{1 / 2}}{b} \mathrm{d}b < \infty.
	\end{equation} 
	Then, almost surely, for all sufficiently large $a$, it holds that
		\begin{equation*} 
			(2+\delta)a^2\log \log a > \tau(a) > a^2 \rho(a).
		\end{equation*}
	\end{lem}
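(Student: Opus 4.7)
The plan is to reduce the problem to sharp two-sided tail estimates for the single random variable $\tau(1)$, and then transfer to all $a$ by Borel--Cantelli along a geometric subsequence combined with monotonicity of the process $a\mapsto \tau(a)$. Two preliminary observations are pivotal. First, Brownian scaling $W(a^{2}s)\stackrel{d}{=} aW(s)$ applied to (\ref{def:tau}) yields $\tau(a)\stackrel{d}{=} a^{2}\tau(1)$. Second, the representation (\ref{e:tau_int}) makes it plain that $a\mapsto \tau(a)$ is almost surely non-decreasing, because enlarging $a$ only adds non-negative contributions from additional Poisson points.

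The tail estimates for $\tau(1)$ stem from the fact that, after integrating $a\in(0,1]$ out of (\ref{e:mu_def}), the L\'evy measure of $\tau(1)$ on $(0,\infty)$ has density $\Phi_{c}(\sqrt{l})/l$, where $\Phi_{c}$ denotes the standard normal tail. For the upper tail, the integral $\int_{0}^{\infty}(e^{l/2}-1)\Phi_{c}(\sqrt{l})\,l^{-1}\mathrm{d}l$ is finite (the integrand is bounded near $0$ and decays like $l^{-3/2}$ at infinity), so $\E\exp(\tau(1)/2)<\infty$ and Markov's inequality yields $\P(\tau(1)>x)\le Ce^{-x/2}$. For the lower tail I work with the L\'evy exponent $\Psi(s)=\int_{0}^{\infty}(1-e^{-sl})\Phi_{c}(\sqrt{l})\,l^{-1}\mathrm{d}l$. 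Differentiating gives $\Psi'(s)=\int_{0}^{\infty}e^{-sl}\Phi_{c}(\sqrt{l})\,\mathrm{d}l\sim 1/(2s)$ as $s\to\infty$ (since $\Phi_{c}(0)=1/2$ and $e^{-sl}$ concentrates near $l=0$), hence $\Psi(s)=\tfrac12\log s+O(1)$, which, via Markov applied to $e^{-\tau(1)/\varepsilon}$, produces $\P(\tau(1)\le\varepsilon)\le C'\sqrt{\varepsilon}$.

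With both tails in hand, fix $q>1$ and set $a_{n}=q^{n}$. For the upper bound, $\P(\tau(a_{n})>(2+\delta')a_{n}^{2}\log\log a_{n})=\P(\tau(1)>(2+\delta')\log\log a_{n})\le C(\log a_{n})^{-1-\delta'/2}$ is summable in $n$, so Borel--Cantelli gives $\tau(a_{n})\le (2+\delta')a_{n}^{2}\log\log a_{n}$ eventually; for $a\in[a_{n},a_{n+1}]$, monotonicity combined with $\log\log(qa)=\log\log a\cdot(1+o(1))$ only loses a factor $q^{2}$, which is absorbed by choosing $q$ close enough to $1$ and $\delta'<\delta$ so that $(2+\delta')q^{2}<2+\delta$. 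For the lower bound, by slow variation, condition \eqref{cond:rho} is equivalent to $\sum_{n}\rho(a_{n})^{1/2}<\infty$, so $\P(\tau(a_{n})\le c\,a_{n}^{2}\rho(a_{n}))\le C\sqrt{c\rho(a_{n})}$ is summable for any fixed $c>0$; applying Borel--Cantelli with $c=2q^{2}$, then using $\tau(a)\ge \tau(a_{n})$, $a_{n}\ge a/q$, and $\rho(a_{n})=\rho(a)(1+o(1))$ on $[a_{n},a_{n+1}]$, gives $\tau(a)\ge 2a^{2}\rho(a)(1+o(1))>a^{2}\rho(a)$ eventually.

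The main obstacle is the sharp left-tail bound $\P(\tau(1)\le\varepsilon)\lesssim\sqrt{\varepsilon}$: the exponent $1/2$ must match exactly the integral criterion \eqref{cond:rho}, and tracking its origin in the boundary value $\Phi_{c}(0)=1/2$ of the L\'evy density is the delicate calculation. Producing it cleanly requires the Laplace-transform asymptotic $\Psi(s)=\tfrac12\log s+O(1)$ outlined above (or an equivalent direct count of Poisson points $l_{i}\in(0,\varepsilon)$ contributing to $\tau(1)$). The upper tail, by contrast, is comparatively routine once finiteness of the moment generating function at $s=1/2$ has been verified.
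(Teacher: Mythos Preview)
Your proof is correct, and the Borel--Cantelli\,/\,monotonicity interpolation along a geometric grid is exactly what the paper does. The difference lies in how the two tail bounds for $\tau(a)/a^{2}\stackrel{d}{=}\tau(1)$ are obtained. The paper appeals to Groeneboom's explicit density
\[
   p(y)=2\,\E\left(\tfrac{X}{\sqrt{y}}-1\right)_{+},\qquad y>0,
\]
for $\tau(a)/a^{2}$ (Corollary~2.1 in \cite{pG83}) and integrates it directly: the bound $p(y)\le\sqrt{2/(\pi y)}\,e^{-y/2}$ immediately yields $\P(\tau(1)\le\varepsilon)\le\sqrt{8/\pi}\,\sqrt{\varepsilon}$ and $\P(\tau(1)\ge x)\le\sqrt{8/\pi}\,e^{-x/2}$, with explicit constants. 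You instead read off the L\'evy density $\overline{\Phi}(\sqrt{l})/l$ of $\tau(1)$ from the Poisson representation \eqref{e:tau_int}--\eqref{e:mu_def} and extract the tails via the Laplace/MGF exponent, in particular via $\Psi'(s)\sim 1/(2s)$ as $s\to\infty$.

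Both routes produce the same $\sqrt{\varepsilon}$ left tail (this exponent is indeed the crux, matching \eqref{cond:rho}). The paper's approach is shorter once the density is quoted, and gives sharp constants without further effort. Your approach avoids invoking the density formula altogether and would transfer verbatim to any subordinator whose L\'evy density has a finite positive limit at $0^{+}$; it is the more robust argument. The equivalence $\int^{\infty}\rho(b)^{1/2}b^{-1}\,\mathrm{d}b<\infty\Leftrightarrow\sum_{n}\rho(q^{n})^{1/2}<\infty$ that you use follows from the uniform convergence theorem for slowly varying functions, so that step is fine; the paper simply writes the same estimate without isolating it.
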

	\begin{proof}
	According to \cite[Corollary~2.1]{pG83}, for any $a>0$ the variable 
	$\frac{\tau(a)}{a^2}$ has a probability density
	\[
	p(y) = 2 \, \E\left(\frac{X}{\sqrt{y}} - 1\right )_+, \qquad y>0,
	\]
	where $ x_+ := \max(x,0)$ and $ X $ is a standard normal random variable.

	{\bf Upper bound}. For $\log \log a > 1$ and any $\epsilon > 0$, we have
		\begin{eqnarray*}
			\P\left( \frac{\tau ( a )}{a^2} \ge (2+\epsilon)\log\log a \right)
			&=& \int_{(2+\epsilon)\log\log a}^\infty 2\,\E\left(\frac{X}{\sqrt{y}} - 1\right )_+ \mathrm{d}y
			\\
			&=& \sqrt{\frac{2}{\pi}} \int_{(2+\epsilon)\log\log a}^\infty \int_{\sqrt y}^\infty 
			\left(\frac x {\sqrt y} - 1\right )e^{-x^2/2} \, \mathrm{d}y \mathrm{d}x
			\\
			&\le& \sqrt{\frac{2}{\pi}}\int_{(2+\epsilon)\log\log a}^\infty e^{-y/2} y^{-1/2} \mathrm{d}y
			\\ &\le& \sqrt{\frac{8}{\pi}} \, e^{-(1+\epsilon / 2)\log\log a}.
		\end{eqnarray*}
		Let $ a_n := (1+\delta_1)^n $ with some $\delta_1 > 0$, and define the events
    \[
         D_n := \left\{\tau(a_{n}) \ge (2+\delta_1)a_n^2 \log\log a_n \right \}. 
    \]
Then the series $\sum_{n=1}^{\infty}   \mathbb{P} (D_{n})$ converges. Hence, by the Borel--Cantelli lemma, almost surely, for all sufficiently large $n$, we have
\begin{equation*} \label{upest}
			\tau(a_{n}) < (2+\delta_1)a_n^2 \log\log a_n.
\end{equation*}
As $n \to \infty$, for any $ a \in [a_{n}, a_{n+1}] $, using that $\tau(\cdot)$ is non-decreasing, we obtain  
\begin{eqnarray*}
			\tau(a) \le \tau(a_{n+1}) 
			&<& (2+\delta_1)a_{n+1}^2 \log\log a_{n+1}
\\
			&=& (2+\delta_1)(1+\delta_1)^2(a_{n}^2\log\log a_{n})(1+o(1)) 
\\              &\le& (2+\delta_1)(1+\delta_1)^3 \, a^2\log\log a.
\end{eqnarray*}
Choosing $\delta_1>0$ so that $(2+\delta_1)(1+\delta_1)^{3} = 2+\delta$, 
we obtain the required upper bound. 

{\bf Lower bound.} Similarly to the upper bound, for $a > 0$, we have 
\begin{eqnarray*}
			\P\left( \frac{\tau(a) }{a^2}\le \rho(a) \right) 
			&=& 
			\int_0^{\rho(a)} 2\, \E\left(\frac{X}{\sqrt{y}} - 1\right )_+ \mathrm{d}y
\\
			&\le& \sqrt{\frac{2}{\pi}} \int^{\rho(a)}_0 e^{-y/2} y^{-1/2} \mathrm{d}y
			\le \sqrt{\frac{8}{\pi}}  \rho(a)^{1/2}.
\end{eqnarray*}
Let $a_{n}:=2^{n}$ and 
$D_n := \{	\tau(a_{n}) \le  a_{n}^2 \, \rho(a_{n}) \}$.
Since $\rho(\cdot )$ is a slowly varying function that satisfies condition \eqref{cond:rho}, it is true that
\[
			\sum_{n=1}^{\infty} \mathbb{P} (D_{n}) \le \sum_{n=1}^{\infty} \sqrt{\frac{8}{\pi}} \rho(a_{n})^{1 / 2} < \infty.  	
\]
By the Borel--Cantelli lemma, almost surely, for all sufficiently large $n$, we have
\begin{equation*} 
			\tau(a_{n})> a_n^2 \rho(a_{n}).
\end{equation*}
As $n \to \infty$, for any $ a \in [a_{n}, a_{n+1}] $, using that $\tau(\cdot)$ is non-decreasing, we obtain 
\begin{equation*}
			\tau(a) \ge \tau(a_{n}) 
			> a_{n}^2\rho(a_{n})  
			= \frac{a_{n+1}^2}{4}\rho(a_{n+1})(1+o(1))
			\ge \frac{a^2}{8}\rho(a).
\end{equation*}
In this inequality, the function $\rho(\cdot)$ can be replaced with $8\rho(\cdot)$, which gives the required lower bound. 
\end{proof}

\vspace{5mm}

Let $\oww(\cdot)$ denote the global MCM of the standard Wiener process $W(\cdot)$ constructed on the entire half-line $[0,\infty)$, starting from height 0. Define $\oww^{\, \prime}(t)$ as the derivative of $\oww(\cdot)$ at all points $t > 0$ where the derivative exists. At the remaining points, which form at most a countable set due to the convexity of $\oww(\cdot)$, we extend $\oww^{\, \prime}(\cdot)$ by left-continuity. The behavior of $\tau(\cdot)$ determines the properties of $\oww(\cdot)$ and $\oww^{\, \prime}(\cdot)$, as it is true that
	\begin{equation} \label{S_tau}
		\oww^{\, \prime}(t) \ge \frac{1}{a} \iff \tau(a) \ge t.
	\end{equation}
For this and other properties of processes $\tau(\cdot)$ and $\oww^{\, \prime}(\cdot)$, see \cite{pG83}.

For an arbitrary convex function $\psi(\cdot)$, define the key auxiliary object
\[
      L_{\psi}(b_{\circ},b) := \int_{(b_{\circ}, b]} \psi(a^{-1}) \, \mathrm{d}\tau(a), \quad b \ge b_\circ \ge 1.
\]
We will often use $L_{\psi}(\cdot, \cdot)$ with the first argument fixed at $b_{\circ} = 1$; namely
\[
	L_{\psi}(\cdot) := L_{\psi}(1, \cdot).
\]
It turns out that $L_{\psi}(\cdot)$ contains a significant amount of information about 
$I_W^{\psi}(\cdot,r)$.

\begin{prop} \label{p:L_S}
Let $\psi(\cdot)$ be a convex function. Then for any $b \ge b_{\circ} \ge 1$, we have
\begin{equation*}
L_{\psi}(b_{\circ},b) = \int_{\tau(b_{\circ})}^{\tau(b)} \psi(\oww^{\, \prime}(t)) \, \mathrm{d}t.
\end{equation*}
\end{prop}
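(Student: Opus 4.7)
The plan is to view the equality as a change-of-variables formula between the Lebesgue-Stieltjes measure $\mathrm{d}\tau(a)$ on the $a$-axis and Lebesgue measure $\mathrm{d}t$ on the $t$-axis, mediated by the pair of mutually inverse monotone functions encoded in the key relation \eqref{S_tau}. Rewriting \eqref{S_tau} as $U(t) \le a \iff \tau(a) \ge t$, where $U(t) := 1/\oww^{\,\prime}(t)$ is non-decreasing in $t$, identifies $\tau(\cdot)$ as the right-continuous generalized inverse of $U(\cdot)$.

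I would carry this out in three short steps. First, translate the equivalence into the set identity $\{t > 0 : U(t) \le a\} = (0, \tau(a)]$ (valid up to endpoints) and take Lebesgue measure to obtain $\mathrm{Leb}\{t > 0 : U(t) \le a\} = \tau(a)$ for every $a > 0$. Subtracting this identity at $a = d$ and $a = c$ yields, for $b_\circ \le c < d \le b$,
\[
\mathrm{Leb}\{t : c < U(t) \le d\} = \tau(d) - \tau(c) = \int_{(c,d]} \mathrm{d}\tau(a).
\]
Second, extend from half-open intervals to arbitrary Borel subsets of $(b_\circ, b]$ by a standard $\pi$-$\lambda$ argument, concluding that the push-forward of Lebesgue measure on $(\tau(b_\circ), \tau(b)]$ under the map $t \mapsto U(t)$ coincides with the restriction of $\mathrm{d}\tau$ to $(b_\circ, b]$. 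Third, apply the image-measure change-of-variables theorem to the continuous bounded function $f(a) := \psi(a^{-1})$ on $[b_\circ, b]$; using $f(U(t)) = \psi(\oww^{\,\prime}(t))$, this gives exactly the claimed identity.

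The only real subtlety I anticipate is the bookkeeping at the jumps. By \eqref{e:tau_int}, $\tau(\cdot)$ is a pure-jump step function whose jumps sit at the atoms $\{a_i\}$ of the Poisson measure $N$, and correspondingly $\oww^{\,\prime}(\cdot)$ is piecewise constant, taking value $1/a_i$ on a segment of length $\Delta\tau(a_i)$. Thus the inverse relation in \eqref{S_tau} fails pointwise on a countable set of pairs $(a,t)$, but the measure-theoretic set identities above hold modulo Lebesgue-null sets on the $t$-axis, and the proposition ultimately amounts to the identification of two natural ways of summing the contributions $\psi(1/a_i)\,\Delta\tau(a_i)$ over $a_i \in (b_\circ, b]$. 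No convexity of $\psi$ is needed beyond what guarantees Borel measurability and boundedness on $[b_\circ, b]$.
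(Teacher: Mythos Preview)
Your argument is correct and rests on the same key relation \eqref{S_tau} as the paper's, but you package the extension step differently. The paper proves the identity first for step functions $\psi_n(u)=\sum x_i 1_{[u_i,u_{i+1})}(u)$ on $[b^{-1},b_\circ^{-1}]$, verifying directly via \eqref{S_tau} that $|\{t:u_{i+1}>\oww'(t)\ge u_i\}|=\tau(u_i^{-1})-\tau(u_{i+1}^{-1})$, and then passes to the uniform limit $\psi_n\to\psi$. You instead phrase the same interval computation as the identification of the push-forward of Lebesgue measure under $t\mapsto U(t)=1/\oww'(t)$ with $\mathrm{d}\tau$, and invoke the image-measure change-of-variables theorem once. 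The two routes are equivalent in content; yours is slightly more conceptual and, as you observe, makes it transparent that convexity plays no role beyond guaranteeing that $a\mapsto\psi(a^{-1})$ is Borel and bounded on $[b_\circ,b]$, while the paper's version is marginally more elementary and self-contained.
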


\begin{proof}
Let $\psi_n(\cdot)$ be a step function such that, for some $k_n \in \mathbb{N}$ and a collection of values
$\{ u_i \in [b^{-1}, b_{\circ}^{-1}], \, x_i \in \mathbb{R} \mid 0 \le i \le k_n \}$, it holds that
\[
\psi_n(u) = \sum_{i=0}^{k_n -1} x_i \cdot 1_{[u_i, u_{i+1})}(u), \quad u \in [b^{-1}, b_{\circ}^{-1}).
\]
Then, using \eqref{S_tau}, we obtain
\[ 
\begin{aligned}
	\int\limits_{\tau(b_{\circ})}^{\tau(b)} \psi_{n}(\oww^{\, \prime}(t)) \, \mathrm{d}t &= \sum_{i=0}^{k_{n}-1} x_{i} \cdot  \Big|\left\{u_{i+1} > \oww^{\, \prime}(t) \ge u_{i}\right\}\Big| = \sum_{i=0}^{k_{n}-1}   x_{i} \cdot \big|\left\{\tau(u_{i}^{-1}) \ge t > \tau(u_{i+1}^{-1})\right\}\big| \\
	&= 
\sum_{i=0}^{k_{n}-1} x_{i} \cdot  \left(\tau(u_{i}^{-1}) - \tau(u_{i+1}^{-1}) \right) =
\int_{(b_\circ,b]} \psi_{n}(a^{-1}) \, \mathrm{d} \tau(a).  
\end{aligned}
\]
Taking the uniform limit $\psi_n(\cdot) \to \psi(\cdot)$ in both integrals yields the desired equality.
\end{proof}

\begin{rem}
In \cite{pG83}, this equality is used for $\psi(u) = u^{2}$.
\end{rem}

The following proposition facilitates the transfer of results from $L_{\psi}(\cdot)$ to $I_W^{\psi}(\cdot, r)$.

\begin{prop} \label{p:L_I}
Let $\psi(\cdot)$ be a convex function attaining its minimal value 0 at zero. Then, for any $r > 0$, the random variable
\[
L_{\psi}(b) - I_W^{\psi}(\tau(b), r), \quad b > 0
\]
is almost surely eventually constant as $b \to \infty$.
\end{prop}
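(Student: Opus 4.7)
The plan is to apply Proposition \ref{p:maj_form} to $W$ on $[0, \tau(b)]$, obtain an explicit formula for $I_W^\psi(\tau(b), r)$, and compare it with the representation of $L_\psi(b)$ provided by Proposition \ref{p:L_S}.

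First I would establish two structural facts. (i) Almost surely, $\tau(b)$ is the unique point of maximum of $W$ on $[0, \tau(b)]$: for $t < \tau(b)$, the a.s.\ uniqueness of the argmax gives the strict inequality $W(t) - t/b < W(\tau(b)) - \tau(b)/b$, whence $W(t) < W(\tau(b)) - (\tau(b)-t)/b < W(\tau(b))$. (ii) Denote by $\ow_b$ the MCM of $W$ on $[0, \tau(b)]$. Then $\ow_b = \oww$ on $[0, \tau(b)]$. The bound $\ow_b \le \oww$ there is immediate from the minimality of $\ow_b$, since $\oww|_{[0,\tau(b)]}$ is itself a concave majorant of $W|_{[0,\tau(b)]}$. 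For the reverse, I extend $\ow_b$ beyond $\tau(b)$ by the affine function $t \mapsto W(\tau(b)) + (t-\tau(b))/b$; the argmax inequality $W(t) \le W(\tau(b)) + (t-\tau(b))/b$ for $t \ge \tau(b)$ keeps the extension above $W$. To verify concavity at the junction I use the auxiliary line $L(t) := W(\tau(b)) - (\tau(b)-t)/b$, which by the argmax inequality for $t \le \tau(b)$ is a linear concave majorant of $W$ on $[0, \tau(b)]$ coinciding with $\ow_b$ at $\tau(b)$; minimality forces $\ow_b \le L$ on $[0, \tau(b)]$, so the left derivative of $\ow_b$ at $\tau(b)$ is at least $1/b$, matching the slope of the affine extension. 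The extension is therefore a concave majorant of $W$ on $[0, \infty)$, hence lies above $\oww$; restricted to $[0, \tau(b)]$ this yields $\oww \le \ow_b$.

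With (i)--(ii) in hand, Proposition \ref{p:maj_form} applies as soon as $r < W(\tau(b))$, which holds a.s.\ for all sufficiently large $b$, since $W(\tau(b)) \ge \tau(b)/b \to \infty$ by Lemma \ref{l:tau_est}. The optimal approximation on $[0, \tau(b)]$ is then affine from $(0, r)$ up to a tangency point $t_b$ on the graph of $\oww$, followed by $\oww$ itself until $\tau(b)$, with no terminal constant piece by (i). The tangency condition $\oww(t_b) - t_b \oww^{\, \prime}(t_b) = r$ involves only $\oww$. The intercept function $g(t) := \oww(t) - t \oww^{\, \prime}(t)$ is non-decreasing in $t$ with $g(0) = 0$ (the standard fact that the $y$-intercept of the tangent to a concave function starting at the origin is non-decreasing), and $g(\tau(b)) = W(\tau(b)) - \tau(b)/b = \max_{s \ge 0}(W(s) - s/b) \ge W(s_0) - s_0/b$ for any fixed $s_0$, so $g(\tau(b)) \to \infty$ a.s.\ as $b \to \infty$. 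Consequently $t_b$ is eventually equal to a fixed value $t_* = t_*(r, W) > 0$ independent of $b$, and $t_* < \tau(b)$.

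Finally, Proposition \ref{p:L_S} gives $L_\psi(b) = \int_{\tau(1)}^{\tau(b)} \psi(\oww^{\, \prime}(t))\, \mathrm{d}t$, while the explicit form of the optimal approximation yields
\begin{equation*}
I_W^\psi(\tau(b), r) = t_* \psi(\oww^{\, \prime}(t_*)) + \int_{t_*}^{\tau(b)} \psi(\oww^{\, \prime}(t))\, \mathrm{d}t.
\end{equation*}
Subtracting,
\begin{equation*}
L_\psi(b) - I_W^\psi(\tau(b), r) = \int_{\tau(1)}^{t_*} \psi(\oww^{\, \prime}(t))\, \mathrm{d}t - t_* \psi(\oww^{\, \prime}(t_*)),
\end{equation*}
which does not depend on $b$ for all sufficiently large $b$. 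The main obstacle is the identification $\ow_b = \oww$ on $[0, \tau(b)]$, and within that the argument via the auxiliary line $L$ that forces the left derivative of $\ow_b$ at $\tau(b)$ to be at least $1/b$; once this is secured, the remainder of the proof is a direct computation.
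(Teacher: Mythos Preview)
Your proof is correct and follows essentially the same route as the paper: apply Proposition~\ref{p:maj_form} on $[0,\tau(b)]$, identify the optimal approximation as an initial tangent segment followed by the global MCM $\oww$, and compare with the representation of $L_\psi(b)$ from Proposition~\ref{p:L_S}. The paper packages the tangent-plus-MCM object as a single global majorant $\orr$ with fixed tangency time $\mathcal{T}(r)$, whereas you work locally and explicitly verify that the local MCM on $[0,\tau(b)]$ coincides with $\oww$ there (a point the paper leaves implicit); this extra care is the only real difference.
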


\begin{proof}
Define the MCM $\orr(\cdot)$ of the Wiener process $W(\cdot)$ on the entire half-line $\mathbb{R}_+$, starting from height $r$, as the MCM of $W(t), t \ge 0$, subject to the additional constraint $\orr(0) = r$.
Then, on some initial interval $[0, \mathcal{T}(r)]$, the minorant $\orr(\cdot)$ is an affine function whose graph passes through the point $(0, r)$ and is tangent to the graph of $\oww(\cdot)$, while on $[\mathcal{T}(r), \infty)$, $\orr(\cdot)$ coincides with $\oww(\cdot)$.

Consider the restriction of $\orr(\cdot)$ to $[0, \tau(b)]$, which is a local majorant of $W(\cdot)$ on $[0, \tau(b)]$ starting from height $r$. Moreover, by Proposition \ref{p:maj_form}, it solves the problem
\[
|h|_{\tau(b)}^\psi \to \min, \quad h \in M'_{\tau(b), r}.
\]
This proposition is applicable because the number of jumps of $\tau(\cdot)$ on any interval bounded away from zero and infinity is almost surely finite, and thus, by \eqref{S_tau}, the number of increasing linear segments of $\orr(\cdot)$ on such an interval is also finite.
Since for large $b$ we have $\max_{0 \le t \le \tau(b)} W(t) > r$, part (b) of that proposition applies in this case.

Compare the expressions
\[
I_W^{\psi}(\tau(b), r) = |\orr|_{\tau(b)}^\psi = \psi\big((\orr)'(0)\big) \mathcal{T}(r) + \int_{\mathcal{T}(r)}^{\tau(b)} \psi(\oww^{\, \prime}(t)) \, \mathrm{d}t, \quad \tau(b) \ge \mathcal{T}(r),
\]
and
\[
L_{\psi}(b) = \int_{\tau(1)}^{\tau(b)} \psi(\oww^{\, \prime}(t)) \, \mathrm{d}t,
\]
where the second equality follows from Proposition \ref{p:L_S}. These random variables differ by a term independent of $b$.
\end{proof}

\subsection{Integral Representations of $L_{\psi}(\cdot, \cdot)$}

Using \eqref{e:tau_int} and \eqref{e:mu_def}, we obtain the following integral representation of $L_{\psi}(\cdot, \cdot)$:
\begin{equation} \label{e:L_int}
       L_{\psi}(b_{\circ}, b) =  \int\limits_{0+}^{\infty} \int\limits_{b_{\circ}+}^{b} l \, \psi(a^{-1}) \, 
       N(\mathrm{d}a \times \mathrm{d}l), \quad b \ge b_{\circ} \ge 1.
\end{equation}
Making the change of variables $z = \frac{\sqrt{l}}{a}$, $s = \frac{a}{b}$ in the representation \eqref{e:L_int}, we obtain  
\begin{equation} \label{e:L_int2}  
	\frac{L_{\psi}(b_{\circ}, b)}{b^{2} \psi(b^{-1})} = \int\limits_{0+}^{\infty} \int\limits_{b_{\circ}/b \,+}^{1} z^{2} s^{2-\varkappa} \frac{\theta((sb)^{-1})}{\theta(b^{-1})} \, 
       N_{1}(\mathrm{d}s \times \mathrm{d}z),
\end{equation}
where $N_1(\mathrm{d}s \times \mathrm{d}z)$ is a Poisson random measure with intensity measure
\[
\mu_1(\mathrm{d}s \times \mathrm{d}z) := 2 \varphi(z) s^{-1} \mathrm{d}s \mathrm{d}z, \qquad s \in (b^{-1}, 1], \, z > 0.
\]
The integral representation of $L_{\psi}(\cdot, \cdot)$ with this normalization will be used in the proofs of Theorems \ref{t:liminf_low} and \ref{t:liminf_up}.

Let $\mu_{\psi, b_{\circ}, b}(\mathrm{d}\ell)$ denote the image measure of $\mu_1(\mathrm{d}s \times \mathrm{d}z) \big|_{\{ s \in (b_{\circ}/b,\,1], \, z > 0 \}}$ under the mapping $(s,z) \mapsto z^2 s^{2-\varkappa} \frac{\theta((sb)^{-1})}{\theta(b^{-1})}$. We obtain the following representation
\begin{equation} \label{e:L_int3}
  \frac{L_\psi(b_{\circ}, b)}{b^{2} \psi(b^{-1})} = \int_{0+}^\infty \ell \, N_{\psi, b_{\circ}, b}(\mathrm{d}\ell),
\end{equation}
where $N_{\psi, b_{\circ}, b}(\mathrm{d}\ell)$ is a Poisson random measure with intensity measure $\mu_{\psi, b_{\circ}, b}(\mathrm{d}\ell)$.

We will omit the index $b_{\circ}$ in the notation of Poisson random measures and their intensity measures corresponding to the $L_{\psi}(\cdot)$; that is, for any $b \ge 1$,
\[
\mu_{\psi, b} := \mu_{\psi, 1, b}, \quad N_{\psi, b} := N_{\psi, 1, b}.
\]

\section{Proofs of the theorems for $\varkappa \in [1,2)$} \label{sec:3}
\subsection{Proof of Theorem \ref{t:liminf_low}}

For any $\xi_{\star} > 2 - \varkappa > \xi^{\star} > 0$ and $b > 1$, define the set
\begin{equation} \label{def:P}
P(\xi_{\star}, \xi^{\star}, b) := \left\{ \rho \, \big| \, (\log b)^{-\xi_{\star}} < \rho < (\log b)^{-\xi^{\star}} \right\}.
\end{equation}
Prior to proving the theorem, we derive a uniform upper bound on the probability that $L_{\psi}(\cdot)$ attains small values within a specified range of the argument.

\begin{lem} \label{l:liminf_low}
Let $\psi(\cdot)$ be a function of the form \eqref{cond:reg} with $\varkappa \in [0,2)$, and let $\theta(\cdot)$ admit a Karamata representation \eqref{e:Kara} for $u \in (0,1)$. Suppose further that, for some $\alpha \in (0,1)$ and all sufficiently small $v > 0$, it is true that
\[
\varepsilon(v) < \frac{1}{|\log v|^{\alpha}}.
\]
Let $\xi_{\star} > 2 - \varkappa > \xi^{\star} > 0$. Then,
\begin{equation*} \label{e:L_upest}
	\limsup\limits_{b \to \infty} \sup_{\rho \in P(\xi_{\star}, \xi^{\star}, b)} \frac{\mathbb{P}\left( \frac{L_{\psi}(b)}{b^{2} \psi(b^{-1})} < \rho \right)}{\rho^{2-\varkappa}} \le e^{c_{\varkappa,1}},
\end{equation*}
where $c_{\varkappa,1} = \frac{C_{\gamma} + \log 2}{2 - \varkappa}$, and $C_{\gamma}$ denotes the Euler--Mascheroni constant.
\end{lem}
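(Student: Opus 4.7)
The strategy is to exploit the Poisson structure via \eqref{e:L_int3}. Since $L_\psi(b)/(b^2\psi(b^{-1}))$ equals the total mass of atoms of $N_{\psi,b}$, the event $\{L_\psi(b)/(b^2\psi(b^{-1})) < \rho\}$ forces every individual atom to lie in $(0,\rho]$, and is therefore contained in $\{N_{\psi,b}((\rho,\infty)) = 0\}$. By the Poisson void probability,
\[
\P\left(\frac{L_\psi(b)}{b^2\psi(b^{-1})} < \rho\right) \le \exp\bigl(-\mu_{\psi,b}((\rho,\infty))\bigr).
\]
It therefore suffices to establish, uniformly in $\rho \in P(\xi_\star,\xi^\star,b)$ as $b \to \infty$,
\[
\mu_{\psi,b}((\rho,\infty)) \ge \frac{|\log\rho|-(C_\gamma+\log 2)}{2-\varkappa} - o(1),
\]
which yields the claim upon exponentiating (using $\rho^{1/(2-\varkappa)} \le \rho^{2-\varkappa}$ valid for $\rho \in (0,1)$, $\varkappa \in [1,2)$).

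Unfolding the image-measure definition gives
\[
\mu_{\psi,b}((\rho,\infty)) = \iint_{\{(s,z):\, s\in(b^{-1},1],\, z>0,\, z^2 s^{2-\varkappa}\theta((sb)^{-1})/\theta(b^{-1}) > \rho\}} \frac{2\varphi(z)}{s}\, ds\, dz.
\]
My first step is to show $\theta((sb)^{-1})/\theta(b^{-1}) = 1 + o(1)$ uniformly over the bulk contributing region. Since $\rho > (\log b)^{-\xi_\star}$, the mass concentrates where $s \gtrsim \rho^{1/(2-\varkappa)} \ge (\log b)^{-\xi_\star/(2-\varkappa)}$, so $sb \to \infty$ uniformly. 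The positive limit of $c(\cdot)$ at zero then gives $c((sb)^{-1})/c(b^{-1}) \to 1$. The remaining Karamata factor is $\exp\bigl(-\int_{b^{-1}}^{(sb)^{-1}}\varepsilon(v)/v\, dv\bigr)$; using $\varepsilon(v) < |\log v|^{-\alpha}$ and the substitution $t = -\log v$, the integral is bounded above by $\tfrac{1}{1-\alpha}[(\log b)^{1-\alpha} - (\log(sb))^{1-\alpha}] = O((\log b)^{-\alpha}\log\log b)$, which is $o(1)$ because $|\log s| = O(\log\log b)$ across the relevant range.

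With the slowly-varying factor absorbed into $\rho' := \rho(1+o(1))$, the change of variables $u = s^{-(2-\varkappa)/2}(\rho')^{1/2}$ gives
\[
\iint_{\{z^2 s^{2-\varkappa} > \rho'\}} \frac{2\varphi(z)}{s}\, ds\, dz = \frac{2}{2-\varkappa}\int_{(\rho')^{1/2}}^{b^{(2-\varkappa)/2}(\rho')^{1/2}} \frac{2\bar\Phi(u)}{u}\, du.
\]
Integrating by parts rewrites the right-hand side as $-2\bar\Phi((\rho')^{1/2})\log(\rho')^{1/2} + \int_{(\rho')^{1/2}}^\infty 2\varphi(u)\log u\, du$ minus an exponentially small tail (since $b^{(2-\varkappa)/2}(\rho')^{1/2}\to\infty$ for all $\rho \in P(\xi_\star,\xi^\star,b)$). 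As $\rho\to 0$ the first term approaches $\tfrac12|\log\rho|$ (using $\bar\Phi(0)=1/2$), and the second converges to $\E\log|X| = -(C_\gamma+\log 2)/2$ for $X \sim \mathcal{N}(0,1)$ via the standard digamma identity. Multiplying by $\tfrac{2}{2-\varkappa}$ delivers the required lower bound.

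The principal obstacle is the uniform control of the slowly-varying factor across the full integration region. The hypothesis $\rho > (\log b)^{-\xi_\star}$ is precisely what keeps $sb$ large enough for Karamata's bound to be effective, and the one-sided condition on $\varepsilon$ suffices because only a lower bound on $\theta((sb)^{-1})/\theta(b^{-1})$ is needed to obtain a lower bound on the measure. The remaining subtle point is the region where $s$ is close to $b^{-1}$, where the $\theta$-ratio is not well-controlled; there the event set forces $z$ to be large, so the Gaussian density $\varphi(z)$ makes the contribution exponentially small compared to the leading $|\log\rho|/(2-\varkappa)$ term.
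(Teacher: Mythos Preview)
Your approach is essentially the paper's: the Poisson void-probability bound reduces to lower-bounding $\mu_{\psi,b}((\rho,\infty))$, the one-sided Karamata hypothesis gives a uniform lower bound on $\theta((sb)^{-1})/\theta(b^{-1})$ over the relevant $s$-range, and the remaining integral over $\{z^2 s^{2-\varkappa}>(1+\delta)\rho\}$ produces $\tfrac{1}{2-\varkappa}|\log\rho|-\tfrac{C_\gamma+\log 2}{2-\varkappa}+o(1)$. The paper integrates in $s$ first and invokes a Gradshteyn--Ryzhik formula for $\int_0^\infty \zeta^{-1/2}e^{-\zeta/2}\log\zeta\,d\zeta$; your change of variables and integration-by-parts route via $\E\log|X|=-(C_\gamma+\log 2)/2$ is the same computation rearranged.

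Two small points. First, the exponent your computation actually yields is $\rho^{1/(2-\varkappa)}$, and your bridge $\rho^{1/(2-\varkappa)}\le\rho^{2-\varkappa}$ only holds for $\varkappa\in[1,2)$, not the full range $[0,2)$ in the hypothesis; in fact the $\rho^{2-\varkappa}$ in the displayed statement appears to be a misprint for $\rho^{1/(2-\varkappa)}$ (this is how the lemma is applied downstream in the proof of Theorem~\ref{t:liminf_low}). Second, your ``subtle point'' about $s$ near $b^{-1}$ is a non-issue: since you only need a \emph{lower} bound on the measure, that region can simply be discarded rather than shown to be small --- the paper does exactly this by restricting to an explicit subregion $\widetilde Q_\psi(b,\rho)$ with $s\ge s_1(\rho,z)$ and $z\le\log b$.
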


\begin{proof}
For $b > 1$ and $\rho > 0$, by using the representation \eqref{e:L_int3} at $b_{\circ} = 1$ and the definition of the measure $\mu_{\psi, b}$, we obtain
\begin{equation} \label{e:L_psi_mu1}
	\mathbb{P}\left( \frac{L_{\psi}(b)}{b^{2} \psi(b^{-1})} < \rho \right) \le 
	\exp\left(- \mu_{\psi, b}([\rho, \infty)) \right) = \exp(-\mu_1(Q_{\psi}(b, \rho))),
\end{equation}
where
$
Q_{\psi}(b, \rho) := \left\{ (s,z) \, \big| \, s \in (b^{-1}, 1], \ z > 0, \ z^{2} s^{2-\varkappa} \frac{\theta((sb)^{-1})}{\theta(b^{-1})} \ge \rho \right\}.
$ \\
Fix $\delta > 0$. For $z, \rho > 0$, define the function
\begin{equation} \label{def:s1}
      s_1(\rho, z) := ((1+\delta) \rho z^{-2})^{1 / (2 - \varkappa)}.
\end{equation}
For $b > 1$ and $\rho > 0$, define the set
\[
\widetilde{Q}_{\psi}(b, \rho) := \left\{ (s,z) \, \big| \, \sqrt{(1+\delta) \rho} \le z \le \log b, \quad s_1(\rho,z) \le s \le 1 \right\}.
\]
For sufficiently large $b$, and for any $\rho \in P(\xi_{\star}, \xi^{\star}, b)$ and $(s,z) \in \widetilde{Q}_{\psi}(b, \rho)$, it holds that
\begin{equation} \label{e:s1}
       1 \ge s \ge s_1(\rho,z) \ge (\log b)^{-(2+\xi_{\star}) / (2 - \varkappa)} > b^{-1},
\end{equation}
and also
\begin{eqnarray} \nonumber
	\frac{\theta((sb)^{-1})}{\theta(b^{-1})} &=& \frac{c((sb)^{-1})}{c(b^{-1})}\exp \left(- \int_{b^{-1}}^{(sb)^{-1}} \frac{\varepsilon(v)}{v} \mathrm{d}v   \right)   
\\ 
     &\ge& \nonumber
    \left(1+\frac{\delta}{2}\right)^{-1} \exp \left(  \frac{-(\log b)^{1-\alpha} +  (\log(sb))^{1-\alpha}}{1-\alpha}\right)   
\\ 
     &\ge& \nonumber
     \left(1+\frac{\delta}{2}\right)^{-1}\exp \left( \frac{\log s_1(b,\rho, z)}{(1-\alpha)
     (\log b)^\alpha}\right)   
\\ 
     &\ge& \label{e:theta_theta_est}
     \left(1+\frac{\delta}{2}\right)^{-1}\exp \left( \frac{-(2+\xi_{\star})\log\log b}{(1-\alpha)(2-\varkappa)
     (\log b)^{\alpha}} \right) \ge (1+\delta)^{-1}. 
\end{eqnarray}
It follows that for sufficiently large $b$, and for any $\rho \in P(\xi_{\star}, \xi^{\star}, b)$,
\[
\widetilde{Q}_{\psi}(b, \rho) \subset Q_{\psi}(b, \rho).
\]
Thus, as $b \to \infty$, for any $\rho \in P(\xi_{\star}, \xi^{\star}, b)$, we obtain
\begin{eqnarray} \nonumber
   &&	\mu_{\psi ,b}([\rho, \infty))  = \mu_1(Q_{\psi }(b,\rho)) \ge \mu_{1}(\widetilde{Q} _{\psi }(b,\rho))
\\ 
    &\ge&  \nonumber
\int_{\sqrt{(1+\delta)\rho} }^{\log b} \int_{s_1(\rho,z)}^{1} 2 \varphi (z) s ^{-1}  \mathrm{d}s \mathrm{d}z
\\ 
   &=& \nonumber
   \int_{\sqrt{(1+\delta)\rho}}^{\log b} \frac{2}{2-\varkappa}|\log\left((1+\delta)\rho z^{-2}\right)| \, \varphi (z) \mathrm{d}z
\\
    &=& \nonumber
    \frac{2}{2-\varkappa}|\log \rho|\int_{\sqrt{(1+\delta)\rho}}^{\log b} \varphi (z) \mathrm{d}z +
    \int_{\sqrt{(1+\delta)\rho}}^{\log b} \frac{2}{2-\varkappa}\log\left(z^{2}\right) \, \varphi (z) \mathrm{d}z - \delta_1+ o(1)
\\
    &=& \nonumber
    \frac{1}{2-\varkappa} |\log \rho|\left(1 + O\left((\log b)^{-\xi^{\star} / 2} \right)\right) + \frac{(1+o(1)) }{\sqrt{2\pi} (2-\varkappa)}\int_{0 }^{\infty} \zeta^{-1 / 2} e^{-\zeta / 2} \log \zeta  \, \mathrm{d} \zeta - \delta_1+ o(1)
\\ 
    &=& \label{e:mu_b_est_low}
    \frac{1}{2-\varkappa} |\log \rho| - \frac{C_{\gamma} + \log 2 }{2-\varkappa}-\delta_1+ o(1).
\end{eqnarray}
Here $\delta_1 := \frac{2 \log(1 + \delta)}{2 - \varkappa} > 0$  can be made arbitrarily small by an appropriate choice of $\delta$; the integral is computed in \cite[4.352(1)]{GR63}. Substituting this estimate into \eqref{e:L_psi_mu1}, we obtain
\[
\limsup\limits_{b \to \infty} \sup_{\rho \in P(\xi_{\star}, \xi^{\star}, b)} \frac{\mathbb{P}\left( \frac{L_{\psi}(b)}{b^{2} \psi(b^{-1})} < \rho \right)}{\rho^{2 - \varkappa}} \le e^{c_{\varkappa, 1} + \delta_1}.
\]
Taking the limit $\delta_1 \to 0$ yields the desired inequality.
\end{proof}

\begin{rem}
In contrast to Theorem \ref{t:liminf_low}, Lemma \ref{l:liminf_low} permits $\varkappa \in [0,1)$.
\end{rem}

The proof of Theorem \ref{t:liminf_low} is based on the idea that the probability of both the value of the function $L_{\psi}(\cdot)$ at the time of the last jump $\tau(\cdot)$ preceding a fixed time $T$, and the moment of this last jump itself being small, is low.

 \subsection*{Proof of Theorem \ref{t:liminf_low}}
For $\rho > 0$ and $b,d \ge 1$, consider the events
\begin{eqnarray*}
	A^{(\tau)}(b,d) &:=& \left\{ N\left((b, eb] \times (b^2d, \infty)\right) \ge 1  \right\}, 
\\
	A^{(L)}(b, \rho) &:=& \left\{ L_{\psi}(b) \le b^{2}\psi(b^{-1}) \rho \right\}, 
\\ 
	A(b,\rho,d) &:=& A^{(\tau)}(b,d) \cap A^{(L)}(b,\rho).
\end{eqnarray*} 
The occurrence of the event $A^{(\tau)}(b,d)$ means that the process $\tau(\cdot)$ on the interval $(b, eb]$ has jumps of size greater than $b^2 d$. 
We estimate the probability of this event using the fact that, for $\psi(\cdot) \equiv 1$ and $a \ge a_{\circ} \ge 1$, it is true that $L_{\psi}(a_{\circ}, a) = \tau(a) - \tau(a_{\circ})$. Using representation \eqref{e:L_int2}, we obtain  
\begin{eqnarray*}
     \mathbb{P} (A^{(\tau)}(b,d)) &=& 1 - \exp \left( -  \int_{0}^{\infty} \int_{1}^{e}1_{\left\{ z^2s ^2 > d \right\} }\, \mu_{1}(\mathrm{d}s\times \mathrm{d}z) \right)  
\\ 
    &\le&
	\int_{1}^{e} \int_{\frac{\sqrt{d} }{s} }^{\infty} 2s ^{-1} \varphi (z) \, \mathrm{d}z\,\mathrm{d}s
\\ 
    &\le&
	\sqrt{\frac{2}{\pi}}  \int_{1}^{e} \exp \left( -\frac{d}{2s^2} \right) \frac{1}{\sqrt{d} } \, \mathrm{d}s  
\\ 
    &\le&
	\sqrt{\frac{2}{\pi}} e^{-d / (2e^2) + 1}.
\end{eqnarray*} 
The events $A^{(\tau)}(b,d)$ and $A^{(L)}(b,\rho)$ are independent because the random variable $L_{\psi}(b) = \int\limits_1^{b} \psi(a^{-1}) \, \mathrm{d}\tau(a)$ does not depend on the process $\left\{ \tau(a) - \tau(b) \, | \, a \in [b, eb] \right\}$, due to the independent increments property of $\tau(\cdot)$.
Using Lemma \ref{l:liminf_low}, for sufficiently large $b$ and for any $\rho \in P(\xi_{\star}, \xi^{\star}, b)$ with some $\xi_{\star} > 2- \varkappa > \xi^{\star} > 0$, we obtain
\[
        \mathbb{P} (A^{(L)}(b,\rho)) \le 2 e^{c_{\varkappa,1}} \rho^{1 / (2-\varkappa)}.
\] 
Hence, for sufficiently large $b$ and for any $d \ge 1$ and $\rho \in P(\xi_{\star}, \xi^{\star}, b)$, we have 
\begin{equation} \label{e:A}
       \mathbb{P} (A(b,\rho,d)) \le c_{\varkappa,2} \, \rho^{1 / (2-\varkappa)} e^{-d / (2e^2)}, 
\end{equation} 
where $c_{\varkappa,2} := 2 e^{c_{\varkappa,1}+1} \sqrt{\frac{2}{\pi}}$.
For integers $n \ge 1$ and $0 \le m \le m_{*} := \left\lfloor 5 + \log \log n \right\rfloor$, consider the sequences
\begin{eqnarray*}
	b_{n} &:=& e^{n}, 
\\
	\rho_{n,m} &:=& g(b_{n}^2) e^{(2-\varkappa)(m+1)}, 
\\
	d_{m} &:=& e^{m} 
\end{eqnarray*}
and the events
\[
    A_{n,m} := A(b_{n}, \rho_{n,m}, d_{m}), \quad U_{n} := A^{(L)}(b_{n},\rho_{n,0}) \cup \left( \bigcup_{m = 1}^{m_{*}} A_{n,m} \right).
\]
By using condition \eqref{cond:t1g1}, for sufficiently large $n$ and for any $m \in [0, m_{*}]$, we obtain
\[
	(\log b_{n})^{-\xi_{\circ}} \le \rho_{n,m} \le (\log b_{n})^{-(\xi^{\circ} / 2)},
\] 
that is, $\rho_{n,m} \in P(\xi_{\circ}, \frac{\xi^{\circ}}{2} , b_{n})$.
Rewriting \eqref{e:A} in the new notation, we obtain
\[
    \mathbb{P} (A_{n,m}) \le c_{\varkappa,2} \, g(b_{n}^2)^{1 / (2-\varkappa)} \exp\left(-e^{m} / (2e^2)\right) e^{m+1}.
\] 
From this estimate, it follows that for sufficiently large $n$, we have 
\begin{eqnarray*}
    \mathbb{P} (U_{n}) \le \mathbb{P}\left(A^{(L)}(b_{n},\rho_{n,0})\right)  + \sum_{m=1}^{\infty}   \mathbb{P} (A_{n,m}) \le c_{\varkappa,3} \, g(b_{n}^2)^{1 / (2-\varkappa)}, 
\end{eqnarray*}
for some constant $c_{\varkappa,3} > 0$ independent of $n$. Therefore, by using condition \eqref{cond:t1g2} and the fact that $g(\cdot)$ is a slowly varying function, we obtain
\[
       \sum_{n=1}^{\infty} \mathbb{P} \left( U_{n} \right)  < \infty.   
\] 
Consequently, by the Borel--Cantelli lemma, almost surely, for sufficiently large $n$, the events $U_{n}$ do not occur. 

For $T > 0$, take $n = n(T)$ such that
\[ 
      \tau(b_{n}) \le T < \tau(b_{n+1}).
\]
The non-occurrence of $U_{n}$ implies that, for every integer $m \in [1, m_{*}]$, at least one of the following events occurs: 
\begin{eqnarray*} 
	\label{cond:L}
	\overline{A_{n,m}^{(L)}} &:=&  
	\left\{ L_{\psi}(b_{n}) > b_{n}^{2}\psi(b_{n}^{-1}) \rho_{n,m} \right\}, 
\\    \label{cond:tau}
	\overline{A_{n,m}^{(\tau)}} &:=&  \left\{ N\left((b_{n}, b_{n+1}] \times (b_{n}^2 d_{m}, \infty)\right) = 0 \right\},
\end{eqnarray*} 
It also implies that the event $\overline{A_{n,0}^{(L)}}$ occurs.
\\
By Lemma \ref{l:tau_est}, for sufficiently large $T$, the following inequalities hold:
\begin{equation} \label{e:b_tau}
	 b_{n} \ge \frac{1}{4} T^{1 / 2} (\log \log T)^{-1/2}
\end{equation} 
and
\begin{equation} \label{e:b_tau2}
        \tau(b_{n+1}) < e b_{n+1}^2 \log \log b_{n+1}.
\end{equation} 

By using \eqref{e:b_tau} and condition \eqref{cond:eps_alpha}, similarly to inequality \eqref{e:theta_theta_est}, for sufficiently large $T$ such that $b_{n} < T^{1/2}$, we obtain
\begin{equation} \label{e:th_th_est}
	\frac{\theta(T^{-1/2})}{\theta(b_{n}^{-1})} 
	 \le 2.
\end{equation}

From inequality \eqref{e:b_tau2}, it follows that for sufficiently large $T$, the event $\overline{A_{n,m_{*}}^{(\tau)}}$ occurs.
For $T > 0$, consider the smallest $m = m(T) \in [1,m_{*}]$ for which $\overline{A_{n,m}^{(\tau)}}$ occurs; that is, the process $\tau(\cdot)$ on $(b_{n}, b_{n+1}]$ does not have jumps larger than $b_{n}^2 d_{m}$. Exactly one of the following three cases can occur:
\begin{eqnarray}
	b_{n}^2 d_{m} &<& \frac{T}{4},\quad b_{n} < T^{1/2}, \label{e:case1}
	\\
	b_{n}^2 d_{m} &\ge& \frac{T}{4},\quad b_{n} < T^{1/2}, \label{e:case2}
	\\
	b_{n}^2 d_{m} &\ge& \frac{T}{4},\quad b_{n} \ge T^{1/2}. \label{e:case3}
\end{eqnarray}

In case \eqref{e:case1}, since the process $\tau(\cdot)$ on the interval $(b_n,b_{n+1}]$ has no jumps exceeding $T/4$, there exists some $b \in [b_n,b_{n+1}]$ such that $\tau(b) \in [\frac{3T}{4}, T]$. From this, by using Proposition \ref{p:L_S}, together with \eqref{e:th_th_est} and \eqref{e:case1}, it follows that for sufficiently large $T$ corresponding to this case, we have
\begin{eqnarray} \nonumber
	 \frac{L_{\psi}(b)}{T^{1-\varkappa / 2}\theta(T^{-1/2}) g(T)} 
      &\ge& 	
    	\frac{\tau(b) \psi(b^{-1})}{T^{1-\varkappa / 2}\theta(T^{-1/2}) g(T)} 
    \\ &\ge& \nonumber
\frac{3}{8} \cdot  
     \frac{b_{n+1}^{-\varkappa} \theta(b_{n}^{-1})}{T^{-\varkappa / 2}\theta(T^{-1/2}) g(T)}
     \\  &\ge& \label{e:liminf0}
     \frac{3}{16} \left(\frac{e}{2}\right)^{-\varkappa}	
     \frac{1}{g(T)}.
\end{eqnarray}

In case \eqref{e:case2}, it is true that
\begin{equation} \label{e:b_ineq}
	b_{n} \ge \frac{1}{2} T^{1/2} e^{-m/2}.
\end{equation}
For sufficiently large $T$ corresponding to this case, by using \eqref{e:th_th_est}, \eqref{e:b_ineq} and the occurrence of $\overline{A_{n,m-1}^{(L)}}$, we obtain
\begin{eqnarray} \nonumber 
 \frac{L_{\psi}(b_{n})}{T^{1-\varkappa / 2} \theta(T^{-1/2}) g(T)} 
	 &\ge& 
	\frac{b_{n}^{2- 
        \varkappa} \theta(b_{n}^{-1}) \rho_{n,m-1}}{T^{1-\varkappa / 2} \theta(T^{-1/2}) g(T)}
\\ 
   &\ge& \label{e:liminf1} 
        \frac{2^{\varkappa - 2}}{2} \cdot \frac{e^{m(2-\varkappa)/2} g(b_{n}^2)}{g(T)} 
   \ge \frac{2^{\varkappa-2}}{4}.
\end{eqnarray}
In the last step, we used \eqref{e:b_ineq} and the fact that, beyond some point, $g(\cdot)$ grows more slowly than $x^{(2-\varkappa)/2}$.

In case \eqref{e:case3}, for corresponding sufficiently large $T$, by using the occurrence of $\overline{A_{n,0}^{(L)}}$, we obtain
\begin{eqnarray} \label{e:liminf2}
	\frac{L_{\psi}(b_{n})}{T^{1-\varkappa / 2} \theta(T^{-1/2}) g(T)} &\ge&
	\frac{e^{2-\varkappa} \, b_{n}^{2-\varkappa} \theta(b_{n}^{-1}) g(b_{n}^2)}{T^{1-\varkappa / 2} \theta(T^{-1/2}) g(T)} \ge \frac{e^{2-\varkappa}}{2}.
\end{eqnarray}
In the last inequality, we used the fact that the function $b \mapsto \theta(b^{-1/2}) g(b)$ is slowly varying at infinity.
Due to the monotonicity of $I_W^{\psi}(\cdot, r)$, we have
\begin{equation} \label{e:iw}
I_W^{\psi}(T,r) \ge I_W^{\psi}(\tau(\widetilde{b}), r), \quad T > 0,
\end{equation}
where $\widetilde{b} = b$ in case \eqref{e:case1} and $\widetilde{b} = b_{n}$ in the other two cases.
Combining inequalities \eqref{e:liminf0}, \eqref{e:liminf1}, and \eqref{e:liminf2}, and applying Proposition \ref{p:L_I} and \eqref{e:iw}, we obtain, with probability 1, that
\[
\liminf_{T \to \infty} \frac{I_W^{\psi}(T,r)}{T^{1-\varkappa / 2} \theta(T^{-1/2}) g(T)} \ge 2^{\varkappa - 4}.
\]
This inequality remains true if $g(\cdot)$ is replaced by $p g(\cdot)$ for any $p > 0$, thus yielding the assertion of the theorem.

\subsection{Proof of Theorem \ref{t:liminf_up}}

We begin the proof by establishing the upper bounds on the tails of the measure $\mu_{\psi,b}$.  
\begin{lem} \label{l:mu_upper}
Let $\psi(\cdot)$ be a function of the form \eqref{cond:reg} with $\varkappa \in [0,2)$, and suppose $\theta(\cdot)$ admits the Karamata representation \eqref{e:Kara} for $u \in (0, 1)$. Assume further that there exists $\alpha \in (0,1)$ such that, for all sufficiently small $v > 0$,
\[
|\varepsilon(v)| < \frac{1}{|\log v|^{\alpha}}.
\]
Let $\xi_{\star} > 2 - \varkappa > \xi^{\star} > 0$.
Then we have
\begin{eqnarray} \label{e:mu_local}
\limsup_{b \to \infty} \sup_{\ell \in P(\xi_{\star}, \xi^{\star}, b)} \left( \mu_{\psi,b}([\ell,\infty)) - \frac{1}{2-\varkappa}|\log \ell| \right) \le - c_{\varkappa,1}.
\end{eqnarray}
Here $P(\xi_{\star}, \xi^{\star}, b)$ is defined by \eqref{def:P}, and $c_{\varkappa,1}$ is defined in the statement of Lemma \ref{l:liminf_low}.
Moreover, for sufficiently large $b$ and for all $\ell \in (0,1)$, it holds that
\begin{eqnarray} \label{e:uniform}
\mu_{\psi,b}([\ell,\infty)) \le \frac{1}{2-\varkappa}|\log \ell| + \frac{2}{2-\varkappa}\frac{(\log b)^{1-\alpha}}{1-\alpha} + c_{\varkappa,4},
\end{eqnarray}
for some $c_{\varkappa,4} \in \mathbb{R}$.
\end{lem}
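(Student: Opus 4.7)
The plan is to mirror the argument of Lemma \ref{l:liminf_low}, but now using an \emph{enclosing} (rather than enclosed) set for $Q_{\psi}(b,\ell)$ and exploiting the bilateral bound on $\varepsilon(\cdot)$ to control the ratio $\theta((sb)^{-1})/\theta(b^{-1})$ from both sides. As a preliminary, I would use the Karamata representation \eqref{e:Kara} together with the hypothesis $|\varepsilon(v)|<|\log v|^{-\alpha}$ to derive, uniformly in $s\in(b^{-1},1]$, the two-sided estimate
\[
\left|\log\frac{\theta((sb)^{-1})}{\theta(b^{-1})}\right| \;\le\; \frac{(\log b)^{1-\alpha}}{1-\alpha} + O(1),
\]
where the $O(1)$ absorbs $\log(c((sb)^{-1})/c(b^{-1}))$ using the boundedness of $c(\cdot)$ and its nonzero limit at zero.

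For the local estimate \eqref{e:mu_local}, fix $\delta>0$ and consider $\ell\in P(\xi_\star,\xi^\star,b)$. Arguing as in the chain \eqref{e:theta_theta_est}, but now using the bilateral bound on $\varepsilon$, I would show that for sufficiently large $b$, and for all $(s,z)$ with $z\ge\sqrt{(1+\delta)^{-1}\ell}$ and $s\ge((1+\delta)^{-1}\ell z^{-2})^{1/(2-\varkappa)}$, the ratio lies in $(1-\delta,1+\delta)$. Since the event defining $Q_{\psi}(b,\ell)$ combined with $\text{ratio}\le 1+\delta$ forces $z^2 s^{2-\varkappa}\ge\ell/(1+\delta)$, this yields the inclusion
\[
Q_{\psi}(b,\ell) \;\subset\; \left\{(s,z)\,:\, z\ge\sqrt{(1+\delta)^{-1}\ell},\;\; ((1+\delta)^{-1}\ell z^{-2})^{1/(2-\varkappa)}\le s\le 1\right\}.
\]
Computing $\mu_1$ of the enclosing set by the same sequence of substitutions as in \eqref{e:mu_b_est_low}, and invoking \cite[4.352(1)]{GR63} for $\int_0^\infty\zeta^{-1/2}e^{-\zeta/2}\log\zeta\,d\zeta$, gives
\[
\mu_{\psi,b}([\ell,\infty)) \;\le\; \frac{1}{2-\varkappa}|\log\ell| - c_{\varkappa,1} + \delta_2 + o(1)
\]
uniformly over $\ell\in P(\xi_\star,\xi^\star,b)$, with $\delta_2\to 0$ as $\delta\to 0$. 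Sending $\delta\to 0$ yields \eqref{e:mu_local}.

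For the uniform estimate \eqref{e:uniform}, I would drop the restriction $\ell\in P$ and use the crude global bound from the preliminary step: $\text{ratio}\le C\exp((\log b)^{1-\alpha}/(1-\alpha))$ uniformly in $s\in(b^{-1},1]$. Hence $Q_{\psi}(b,\ell)$ is contained in $\{z^2 s^{2-\varkappa}\ge\widetilde{\ell}\}\cap\{s\in(b^{-1},1]\}$ with $\widetilde{\ell}:=\ell C^{-1}\exp(-(\log b)^{1-\alpha}/(1-\alpha))$. A direct computation of $\mu_1$ of this set, splitting the $z$-integral at the threshold $z^2 = \widetilde{\ell} b^{2-\varkappa}$ beyond which the lower $s$-bound is clipped to $b^{-1}$, produces a bound of the form $\frac{1}{2-\varkappa}|\log\widetilde{\ell}| + O(1)$, which expands to $\frac{1}{2-\varkappa}|\log\ell| + \frac{1}{2-\varkappa}\frac{(\log b)^{1-\alpha}}{1-\alpha} + O(1)$; doubling the middle coefficient comfortably absorbs the contribution of the clipped region together with all remaining constants into $c_{\varkappa,4}$, giving \eqref{e:uniform}.

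The main technical wrinkle is keeping the local and uniform regimes compatible: in the local regime the $z$-integration can be taken effectively over $[\sqrt{(1+\delta)^{-1}\ell},\log b]$ and the clipping at $s=b^{-1}$ contributes only $o(1)$ thanks to the Gaussian tail, whereas in the uniform regime $\ell$ may be so small that the clipping region carries a non-negligible mass and must be bookkept explicitly. Neither part, however, requires new probabilistic input beyond the computation of $\mu_1$ already set up in the proof of Lemma \ref{l:liminf_low}.
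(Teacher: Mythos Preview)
Your overall strategy matches the paper's, and your treatment of the uniform bound \eqref{e:uniform} is essentially correct. The gap is in the local estimate \eqref{e:mu_local}. The inclusion $Q_\psi(b,\ell)\subset\{z\ge\sqrt{(1+\delta)^{-1}\ell},\ s\ge((1+\delta)^{-1}\ell z^{-2})^{1/(2-\varkappa)}\}$ would follow from $\text{ratio}\le 1+\delta$ on all of $Q_\psi(b,\ell)$, but this is false: $Q_\psi(b,\ell)$ contains points with $s$ arbitrarily close to $b^{-1}$ (take $z$ large enough), where $|\log s|\sim\log b$ and the ratio is of order $\exp(O((\log b)^{1-\alpha}))$, not $1+\delta$. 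Your claim that the ratio lies in $(1-\delta,1+\delta)$ on the whole enclosing set is likewise wrong for the same reason. Your closing remark about restricting $z$ to $[\sqrt{(1+\delta)^{-1}\ell},\log b]$ and absorbing $z>\log b$ into a Gaussian-tail $o(1)$ handles the large-$z$ piece, but it does not by itself give the inclusion on $\{z\le\log b\}$: you still must rule out points of $Q_\psi\cap\{z\le\log b\}$ with small $s$, and ``clipping at $s=b^{-1}$'' is a different issue.

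The paper fills this gap by inserting an intermediate region: it shows that for $z\le\log b$ and $s$ below a threshold $\exp(-(\log b)^\gamma)$ with $0<\gamma<\alpha$ (and separately for $s\in(b^{-1},\beta b^{-1})$), one has
\[
z^{2}s^{2-\varkappa}\,\frac{\theta((sb)^{-1})}{\theta(b^{-1})}<(\log b)^{-\xi_\star}\le\ell,
\]
so such $(s,z)$ are not in $Q_\psi(b,\ell)$ at all. The key input is the sharper bound $\text{ratio}\le C\exp\bigl(|\log s|/((1-\alpha)(\log b)^\alpha)\bigr)$, which shows that $s^{2-\varkappa}\cdot\text{ratio}$ still behaves like $s^{2-\varkappa-o(1)}$ and hence drops below $(\log b)^{-\xi_\star-2}$ once $s$ falls below any fixed negative power of $\log b$. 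Once you know that $Q_\psi\cap\{z\le\log b\}$ forces $s\ge(\log b)^{-C}$, your $(1+\delta)$-bound on the ratio becomes valid there and the remainder of your computation goes through.
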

\begin{proof}
Fix $\delta \in (0,1)$.
There exists $\beta > 1$ such that, for all $b > \beta$ and $s \in [\beta b^{-1}, 1]$, we have
\begin{eqnarray} \nonumber
\frac{\theta((sb)^{-1})}{\theta(b^{-1})} &=& \frac{c((sb)^{-1})}{c(b^{-1})} \exp\left(- \int_{b^{-1}}^{(sb)^{-1}} \frac{\varepsilon(v)}{v} \mathrm{d}v \right) \\
&\le& \nonumber
(1-\delta)^{-1} \exp\left( \frac{1}{1-\alpha} \big( (\log b)^{1-\alpha} - (\log(sb))^{1-\alpha} \big) \right) \\
&\le& \label{e:theta_theta_est4}
(1-\delta)^{-1} \exp\left( \frac{|\log s|}{(1-\alpha)(\log b)^\alpha} \right) \\
&\le& \label{e:theta_theta_est2}
(1-\delta)^{-1} \exp\left( \frac{(\log b)^{1-\alpha}}{1-\alpha} \right).
\end{eqnarray}
Define, for $\ell, z > 0$ and $b > 1$,
\[
s_2(\ell,b,z) := \left( (1-\delta) \ell z^{-2} \exp\left(- \frac{(\log b)^{1-\alpha}}{1-\alpha} \right) \right)^{1 / (2-\varkappa)}.
\]
For $b > \beta$, and $\ell,z > 0$, and $\beta b^{-1} \le s < s_2(\ell,b,z)$, applying \eqref{e:theta_theta_est2} yields
\[
z^2 s^{2-\varkappa} \frac{\theta((sb)^{-1})}{\theta(b^{-1})} < \ell.
\]
For $\ell > 0$, and $z \ge \sqrt{(1-\delta)\ell}$, and $b > 1$, it is true that
\[
s_2(\ell,b,z) \le 1.
\]
Thus, for $\ell \in (0,1)$ and $b > \beta$, we obtain
\begin{eqnarray*} \nonumber
\mu_{\psi,b}([\ell,\infty)) &=& \int_0^\infty \int_{b^{-1}}^1 1_{\left\{ z^2 s^{2-\varkappa} \frac{\theta((sb)^{-1})}{\theta(b^{-1})} \ge \ell \right\}} \mu_1(\mathrm{d}s \times \mathrm{d}z) \\
&\le& \nonumber
\int_{\sqrt{(1-\delta)\ell}}^{\infty} \int_{s_2(\ell,b,z)}^1 2 \varphi(z) s^{-1} \mathrm{d}s \mathrm{d}z + \int_0^\infty \int_{b^{-1}}^{\beta b^{-1}} 2 \varphi(z) s^{-1} \mathrm{d}s \mathrm{d}z \\
&=& \label{e:mu_b_est_inter}
\int_{\sqrt{(1-\delta)\ell}}^{\infty} \frac{2}{2-\varkappa} \left| \log \left( (1-\delta)\ell z^{-2} \right) - \frac{(\log b)^{1-\alpha}}{1-\alpha} \right| \varphi(z) \mathrm{d}z + \log \beta \\
&\le& \label{e:mu_b_est_up}
\frac{1}{2-\varkappa} |\log \ell| + \frac{2}{2-\varkappa} \frac{(\log b)^{1-\alpha}}{1-\alpha} + \frac{\int_1^\infty \zeta^{-1/2} e^{-\zeta/2} \log \zeta \, \mathrm{d}\zeta}{\sqrt{2\pi} (2-\varkappa)} + \delta_2 + \log \beta,
\end{eqnarray*}
where $\delta_2 := \frac{2|\log(1-\delta)|}{2-\varkappa} > 0$. Hence, inequality \eqref{e:uniform} holds with
\[
c_{\varkappa,4} := \frac{\int_1^\infty \zeta^{-1/2} e^{-\zeta/2} \log \zeta \, \mathrm{d}\zeta}{\sqrt{2\pi} (2-\varkappa)} + \delta_2 + \log \beta.
\]

Fix any $\gamma \in (0,\alpha)$. Similarly to \eqref{e:theta_theta_est2}, for sufficiently large $b$ and \\ 
$ s \in [\exp(-(\log b)^\gamma), 1]$, we have
\begin{equation} \label{e:theta_theta_est3}
\frac{\theta((sb)^{-1})}{\theta(b^{-1})} \le \left(1 - \frac{\delta}{2}\right)^{-1} \exp\left( \frac{|\log s|}{(1-\alpha)(\log b)^\alpha} \right) \le (1-\delta)^{-1}.
\end{equation}
Define, for $\ell, z > 0$, 
\[
s_3(\ell,z) := \left( (1-\delta) \ell z^{-2} \right)^{1/ (2- \varkappa)}.
\]
For $\ell > 0$ and $z \ge \sqrt{(1-\delta) \ell}$, it is true that
\[
s_3(\ell,z) \le 1.
\]
For sufficiently large $b$ and for any $\ell,z > 0$ and $s_3(\ell,z) > s \ge \exp(-(\log b)^\gamma)$, by using \eqref{e:theta_theta_est3}, we get
\[
z^2 s^{2-\varkappa} \frac{\theta((sb)^{-1})}{\theta(b^{-1})} < \ell.
\]
For sufficiently large $b$ and $\beta b^{-1} < s < \exp(-(\log b)^\gamma)$, $z \in (0, \log b)$, by using \eqref{e:theta_theta_est4}, we obtain
\begin{eqnarray*}
z^2 s^{2-\varkappa} \frac{\theta((sb)^{-1})}{\theta(b^{-1})} &<& (1-\delta)^{-1} (\log b)^2 \exp\left( \frac{|\log s|}{(1-\alpha)(\log b)^\alpha} \right) s^{2-\varkappa} \\
&=& (1-\delta)^{-1} (\log b)^2 \exp\left( \left( 2-\varkappa - \frac{1}{(1-\alpha)(\log b)^\alpha} \right) \log s \right) \\
&<& (\log b)^{-\xi_{\star}}.
\end{eqnarray*}
Since $c(\cdot)$ and $\varepsilon(\cdot)$ are bounded functions, for sufficiently large $b$ and $z \in (0, \log b)$, $s \in (b^{-1}, \beta b^{-1})$, we have
\[
	z^2 s^{2-\varkappa} \frac{\theta((sb)^{-1})}{\theta(b^{-1})} = z^2 s^2 b^{\varkappa} \frac{\psi((sb)^{-1})}{\theta(b^{-1})} \le \beta^2 \frac{b^{-2+\varkappa} (\log b)^2}{\theta(b^{-1})} \sup_{u \in [1,\beta]} \psi(u) < (\log b)^{-\xi_{\star}}.
\]
Thus, as $b \to \infty$, for any $\ell \in P(\xi_{\star}, \xi^{\star}, b)$, it holds that
\begin{eqnarray*} \label{e:mu_b_est_inter}
	\mu_{\psi,b}([\ell, \infty)) &=& \int_0^\infty \int_{b^{-1}}^1 1_{\left\{ z^2 s^{2-\varkappa} \frac{\theta((sb)^{-1})}{\theta(b^{-1})} \ge \ell \right\}} \mu_1(\mathrm{d}s \times \mathrm{d}z) \\
	&\le& \int_{\sqrt{(1-\delta)\ell}}^{\log b} \int_{s_3(\ell,z)}^{1} 2 \varphi(z) s^{-1} \mathrm{d}s \mathrm{d}z + \int_{\log b}^\infty \log b \, \varphi(z) \mathrm{d}z \\
	&=& \int_{\sqrt{(1-\delta)\ell}}^\infty \frac{2}{2-\varkappa} \big|\log\big((1-\delta)\ell z^{-2}\big)\big| \, \varphi(z) \mathrm{d}z + o(1) \\
	&=& \frac{1}{2-\varkappa} |\log \ell| - c_{\varkappa,1} + \delta_2 + o(1).
\end{eqnarray*}
The last equality is verified similarly to \eqref{e:mu_b_est_low}.
Inequality \eqref{e:mu_local} now follows by letting $\delta_2 \to 0$ through an appropriate choice of $\delta$.
\end{proof}

Next, we establish a sharp estimate for the probabilities of  small values of $L_{\psi}(\cdot,\cdot)$, uniform in the first argument when it is sufficiently small.

\begin{lem} \label{l:liminf_up}
Let $\psi(\cdot)$ be a function of the form \eqref{cond:reg} with $\varkappa \in [0,2)$, and suppose $\theta(\cdot)$ admits the Karamata representation \eqref{e:Kara} for $u \in (0, 1)$. Assume further that there exists $\alpha \in (0,1)$ such that, for all sufficiently small $v > 0$,
\[
|\varepsilon(v)| < \frac{1}{|\log v|^{\alpha}}.
\]
Let $\xi_{\circ} > 2 - \varkappa > \xi^{\circ} > 0$ and let $\rho(\cdot)$ be a function such that, for sufficiently large $b$, it holds that
\[
	\rho(b) \in P(\xi_{\circ}, \xi^{\circ}, b).
\]
Let $\widehat{b} := b (\log b)^{-(4+\xi_{\circ})/(2-\varkappa)}$. Then, for some constant $c_{\varkappa,5} > 0$, we have
\begin{equation*}
	\lim_{b \to \infty} \sup_{b_{\circ} \in [1, \widehat{b}]} \frac{\mathbb{P}\left(\frac{L_{\psi}(b_{\circ}, b)}{b^{2} \psi(b^{-1})} < \rho(b) \right)}{\rho(b)^{1/(2-\varkappa)}} = 
	\lim_{b \to \infty} \inf_{b_{\circ} \in [1, \widehat{b}]} \frac{\mathbb{P}\left(\frac{L_{\psi}(b_{\circ}, b)}{b^{2} \psi(b^{-1})} < \rho(b) \right)}{\rho(b)^{1/(2-\varkappa)}} = c_{\varkappa,5}.
\end{equation*}
\end{lem}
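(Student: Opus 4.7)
The plan is to exploit the Poisson structure of the representation \eqref{e:L_int3}, writing $X := L_\psi(b_\circ,b)/(b^2\psi(b^{-1}))$ as $\int_{0+}^\infty \ell\,N_{\psi,b_\circ,b}(\mathrm{d}\ell)$, and to decompose it by jump size: $X = X_{<\rho(b)} + X_{\ge\rho(b)}$, where the summands are the integrals over $(0,\rho(b))$ and $[\rho(b),\infty)$, respectively. Since the integrand is nonnegative, the event $\{X<\rho(b)\}$ forces $X_{\ge\rho(b)}=0$, and by independence of a Poisson random measure on disjoint sets,
\[
\mathbb{P}(X<\rho(b)) = \exp\bigl(-\mu_{\psi,b_\circ,b}([\rho(b),\infty))\bigr)\cdot\mathbb{P}(X_{<\rho(b)}<\rho(b)).
\]
I would then show that, uniformly in $b_\circ\in[1,\widehat b]$, the first factor is asymptotic to $e^{c_{\varkappa,1}}\rho(b)^{1/(2-\varkappa)}$ while the second tends to a positive constant $p\in(0,1)$, so that $c_{\varkappa,5}=e^{c_{\varkappa,1}}p$.

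For the first factor, the goal is the two-sided asymptotic $\mu_{\psi,b_\circ,b}([\rho(b),\infty))=(2-\varkappa)^{-1}|\log\rho(b)|-c_{\varkappa,1}+o(1)$, uniformly in $b_\circ\in[1,\widehat b]$. The upper bound is immediate from $\mu_{\psi,b_\circ,b}\le\mu_{\psi,b}$ combined with \eqref{e:mu_local}. The matching lower bound is exactly the inner-region computation \eqref{e:mu_b_est_low} from the proof of Theorem \ref{t:liminf_low}: its $s$-integration is over $[s_1(\rho(b),z),1]$ with $s_1\ge(\log b)^{-(2+\xi_\circ)/(2-\varkappa)}$ by \eqref{e:s1}, and this interval is contained in $(b_\circ/b,1]$ thanks to $b_\circ/b\le(\log b)^{-(4+\xi_\circ)/(2-\varkappa)}$, so the estimate transfers verbatim to $\mu_{\psi,b_\circ,b}$. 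Hence $\exp(-\mu_{\psi,b_\circ,b}([\rho(b),\infty)))=e^{c_{\varkappa,1}}\rho(b)^{1/(2-\varkappa)}(1+o(1))$ uniformly.

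For the second factor, I would show $X_{<\rho(b)}/\rho(b)\Rightarrow Y_\infty$ in distribution, uniformly in $b_\circ\in[1,\widehat b]$, where $Y_\infty$ is the positive infinitely divisible random variable with Lévy measure $\nu_\infty(\mathrm{d}y)=(2-\varkappa)^{-1}y^{-1}\mathrm{d}y$ on $(0,1)$. The rescaled variable has Lévy measure $\nu_b$ on $(0,1)$ with $\nu_b([y,1))=\mu_{\psi,b_\circ,b}([y\rho(b),\rho(b)))$, and applying the two-sided asymptotic above to both endpoints gives $\nu_b([y,1))\to(2-\varkappa)^{-1}|\log y|$ pointwise for $y\in(0,1)$. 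A parallel second-moment estimate, using \eqref{e:uniform} to control the small-$\ell$ tail (with the parameter $\xi_\star$ in \eqref{e:mu_local} chosen large enough to beat the $\rho(b)^{-2}$ scaling against the polylogarithmic remainder in \eqref{e:uniform}), yields $\limsup_b\int_0^\epsilon y^2\,\nu_b(\mathrm{d}y)=O(\epsilon^2)\to 0$ as $\epsilon\to 0$. These two ingredients give convergence of Laplace exponents $\int_0^1(1-e^{-\lambda y})\nu_b(\mathrm{d}y)\to\int_0^1(1-e^{-\lambda y})\nu_\infty(\mathrm{d}y)$ and hence weak convergence. Since $\nu_\infty$ is an infinite Lévy measure, $Y_\infty$ is atomless, so $\mathbb{P}(X_{<\rho(b)}/\rho(b)<1)\to\mathbb{P}(Y_\infty<1)=:p\in(0,1)$.

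The main technical obstacle is the uniformity in $b_\circ$: this is exactly what the logarithmic safety margin in $\widehat b=b(\log b)^{-(4+\xi_\circ)/(2-\varkappa)}$ is engineered to provide. It ensures that the excluded region $s\in(b^{-1},b_\circ/b]$, which distinguishes $\mu_{\psi,b}$ from $\mu_{\psi,b_\circ,b}$, lies strictly to the left of the dominant region $s\sim(\log b)^{-\xi_\circ/(2-\varkappa)}$ coming from \eqref{def:s1}, with a two-log-log-power gap that absorbs the Karamata fluctuations of $\theta((sb)^{-1})/\theta(b^{-1})$ controlled by the bilateral bound $|\varepsilon(v)|<|\log v|^{-\alpha}$ to at most $\exp(O((\log b)^{1-\alpha}))$.
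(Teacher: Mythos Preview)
Your approach matches the paper's essentially line for line: the same Poisson factorization into a no-large-jump factor $\exp(-\mu_{\psi,b_\circ,b}([\rho(b),\infty)))$ and a small-jump factor, the same two-sided asymptotic for the tail via \eqref{e:mu_local} and a rerun of \eqref{e:mu_b_est_low} (with the logarithmic cushion in $\widehat b$ doing exactly the work you describe), and the same weak-convergence argument for the rescaled small-jump sum to the limit with L\'evy measure $(2-\varkappa)^{-1}y^{-1}\,\mathrm{d}y$ on $(0,1)$, whose atomlessness yields the convergence of $\mathbb{P}(X_{<\rho(b)}<\rho(b))$. One small correction: for convergence of the Laplace (or characteristic) exponent of a positive Poisson integral you need \emph{first}-moment tightness $\limsup_b\int_0^\epsilon y\,\nu_b(\mathrm{d}y)\to 0$, not second---second-moment control does not by itself bound $\int_0^\epsilon(1-e^{-\lambda y})\,\nu_b(\mathrm{d}y)\le\lambda\int_0^\epsilon y\,\nu_b(\mathrm{d}y)$; the paper verifies precisely the first-moment bound using \eqref{e:uniform}, and your argument goes through unchanged once you replace $y^2$ by $y$.
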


\begin{proof}
For $b \ge b_{\circ} \ge 1$, consider the events
\begin{eqnarray*}
	A_{b_{\circ},b}^{\dag} &:=& \left\{ \int_{(0, \rho(b))} \ell \, N_{\psi,b_{\circ},b}(\mathrm{d}\ell) < \rho(b) \right\}, \\
	A_{b_{\circ},b}^{\ddag} &:=& \left\{ N_{\psi,b_{\circ},b}([\rho(b), \infty)) = 0 \right\}.
\end{eqnarray*}
Using the independence of $A_{b_{\circ},b}^{\dag}$ and $A_{b_{\circ},b}^{\ddag}$ and representation \eqref{e:L_int3}, we get
\begin{equation} \label{e:Lpsi1}
	\mathbb{P}\left(\frac{L_{\psi}(b_{\circ},b)}{b^{2} \psi(b^{-1})} < \rho(b) \right) = \mathbb{P}(A_{b_{\circ},b}^{\dag}) \cdot \mathbb{P}(A_{b_{\circ},b}^{\ddag}) = \mathbb{P}(A_{b_{\circ},b}^{\dag}) \cdot \exp\left(-\mu_{\psi,b_{\circ},b}([\rho(b), \infty))\right).
\end{equation}

Define the measure
\[
\widetilde{\mu}_{\psi,b_{\circ},b}([\widetilde{\ell}, 1]) := \mu_{\psi,b_{\circ},b}([\widetilde{\ell} \, \rho(b), \rho(b)]), \quad \widetilde{\ell} \in (0,1].
\]
Let $\widetilde{N}_{\psi,b_{\circ},b}(\mathrm{d}\widetilde{\ell})$ be a Poisson random measure with intensity measure $\widetilde{\mu}_{\psi,b_{\circ},b}(\mathrm{d}\widetilde{\ell})$, then
\[
\int_{(0, \rho(b))} \ell \, N_{\psi,b_{\circ},b}(\mathrm{d}\ell) = \rho(b) \int_{(0,1)} \widetilde{\ell} \, \widetilde{N}_{\psi,b_{\circ},b}(\mathrm{d}\widetilde{\ell}).
\]

For any $\ell > 0$ and $b \ge b_{\circ} \ge 1$, since $L_{\psi}(b_{\circ}, b) \le L_{\psi}(b)$. it is true that
\begin{equation} \label{e:adj}
	\mu_{\psi,b_{\circ},b}([\ell, \infty)) \le \mu_{\psi,b}([\ell, \infty)),
\end{equation}
Consequently, the inequalities established in Lemma \ref{l:mu_upper} with $\xi_{\star} = \xi_{\circ} + 2$ and $\xi^{\star} = \xi^{\circ}$ apply to $\mu_{\psi,b_{\circ},b}$. Using inequality \eqref{e:mu_local}, for any $\delta_1 > 0$, as $b \to \infty$, for any $b_{\circ} \in [1,b]$ and $\ell \in P(\xi_{\star}, \xi^{\star}, b)$, it holds that
\begin{equation} \label{e:mu_circ_upper}
	\mu_{\psi,b_{\circ},b}([\ell, \infty)) \le \frac{1}{2-\varkappa} |\log \ell| - c_{\varkappa,1} + \delta_1.
\end{equation}
For all sufficiently large $b$, and for any $b_{\circ} \in [1, \widehat{b}]$, $\ell \in P(\xi_{\star}, \xi^{\star}, b)$, and $\sqrt{(1+\delta) \ell} \le z \le \log b$, the function $s_1(\cdot, \cdot)$ defined in \eqref{def:s1} satisfies
\[
1 \ge s_1(\ell, z) \ge (\log b)^{-(2+\xi_{\star})/(2-\varkappa)} = \frac{\widehat{b}}{b} \ge \frac{b_{\circ}}{b}.
\]
Hence, by repeating the calculation in \eqref{e:mu_b_est_low} verbatim, we obtain that for any $\delta_2 > 0$, and for all sufficiently large $b$, uniformly over all $b_{\circ} \in [1, \widehat{b}]$ and $\ell \in P(\xi_{\star}, \xi^{\star}, b)$, the following holds:
\begin{equation} \label{e:mu_circ_lower}
	\mu_{\psi,b_{\circ},b}([\ell, \infty)) \ge \frac{1}{2-\varkappa} |\log \ell| - c_{\varkappa,1} - \delta_2.
\end{equation}
By combining estimates \eqref{e:mu_circ_upper} and \eqref{e:mu_circ_lower}, we obtain that for any $\delta_3 > 0$, and for all sufficiently large $b$, uniformly over all $b_{\circ} \in [1, \widehat{b}]$ and $\widetilde{\ell} \in [(\log b)^{-2}, 1]$, it holds that
\begin{eqnarray} \label{e:mu_tilde_upper}
	\widetilde{\mu}_{\psi,b_{\circ},b}([\widetilde{\ell},1]) &=& \mu_{\psi,b_{\circ},b}([\widetilde{\ell} \rho(b), \infty)) - \mu_{\psi,b_{\circ},b}((\rho(b), \infty)) \le \frac{1}{2-\varkappa} |\log \widetilde{\ell}| + \delta_3, \\
	\label{e:mu_tilde_lower}
	\widetilde{\mu}_{\psi,b_{\circ},b}([\widetilde{\ell},1]) &\ge& \frac{1}{2-\varkappa} |\log \widetilde{\ell}| - \delta_3.
\end{eqnarray}
Thus, for any fixed $\widetilde{\ell} \in (0,1]$, as $b \to \infty$, the following holds uniformly over all $b_{\circ} \in [1, \widehat{b}]$:
\begin{equation} \label{e:mu_to_mu}
	\widetilde{\mu}_{\psi,b_{\circ},b}([\widetilde{\ell},1]) \to \widetilde{\mu}_{\varkappa}([\widetilde{\ell},1]) := \frac{1}{2-\varkappa} |\log \widetilde{\ell}|.
\end{equation}

By combining estimates \eqref{e:mu_tilde_upper} and \eqref{e:mu_tilde_lower}, together with \eqref{e:uniform}, \eqref{e:adj}, and \eqref{e:mu_circ_lower}, for any $\delta_4, \epsilon >0$, as $b \to \infty$, uniformly over all $b_{\circ} \in [1, \widehat{b}\,]$, we obtain
\begin{eqnarray*} \nonumber
	\int_{0}^{\epsilon} \widetilde{\ell}  \widetilde{\mu} _{\psi, b_{\circ},b}(\mathrm{d}\widetilde{\ell} ) &=& \int_{0}^{\epsilon} \widetilde{\mu} _{\psi, b_{\circ},b}([\widetilde{\ell} , \epsilon))\mathrm{d}\widetilde{\ell}  
\\ &\le& \nonumber
\int_{0}^{\epsilon} \left( \frac{1}{2-\varkappa}\Big|\log \frac{\widetilde{\ell} }{\epsilon}\Big| + \delta_4 \right) \mathrm{d}\widetilde{\ell}
\\ & &+ \int_{0}^{(\log b)^{-2}} \left( \frac{2}{2-\varkappa}\frac{(\log b)^{1-\alpha}}{1-\alpha} + c_{\varkappa,4} + c_{\varkappa,1} + \delta_4\right)  \mathrm{d}\widetilde{\ell}
\\ &=& \nonumber
 \left( \frac{1}{2-\varkappa} + \delta_4 \right)\epsilon + O\left((\log b)^{-1-\alpha}\right). 
\end{eqnarray*} 
It follows that
\begin{equation} \label{e:lim}
\lim_{\epsilon \to 0} \limsup\limits_{b \to \infty} \sup_{b_{\circ} \in [1, \widehat{b}]} \int_0^{\epsilon}\widetilde{\ell} \, \widetilde{\mu} _{\psi, b_{\circ},b}(\mathrm{d}\widetilde{\ell})  = 0. 
\end{equation} 

Let $\widetilde{N} _{\varkappa}(\mathrm{d}\widetilde{\ell} )$ denote a Poisson random measure with intensity measure $\widetilde{\mu} _{\varkappa}(\mathrm{d}\widetilde{\ell} )$. 
It is easy to verify that conditions \eqref{e:mu_to_mu} and \eqref{e:lim} imply pointwise convergence of the characteristic functions of $\int_{(0,1)} \widetilde{\ell}\widetilde{N} _{\psi ,b_{\circ},b}(\mathrm{d}\widetilde{\ell})$ to that of $\int_{(0,1)} \widetilde{\ell}\widetilde{N} _{\varkappa}(\mathrm{d}\widetilde{\ell} )$, as $b \to \infty$, uniformly over all $b_{\circ} \in [1, \widehat{b}\,]$. Since $\widetilde{\mu} _{\varkappa}(0,1] = \infty$, the random variable $\int_{(0,1)} \widetilde{\ell}\widetilde{N} _{\varkappa}(\mathrm{d}\widetilde{\ell} )$ has a continuous distribution, and thus, as $b \to \infty$, the following holds uniformly over all $b_{\circ}\in [1,\widehat{b}\,]$:
\begin{equation} \label{e:lim2}
 \mathbb{P} \left(  A_{b_{\circ},b}^{\dag}\right)  = \mathbb{P} \left( \int_{(0,1)} \widetilde{\ell}\widetilde{N} _{\psi ,b_{\circ},b}(\mathrm{d}\widetilde{\ell} ) < 1 \right) \to   \mathbb{P} \left( \int_{(0,1)} \widetilde{\ell}\widetilde{N} _{\varkappa}(\mathrm{d}\widetilde{\ell} ) < 1 \right) > 0.
\end{equation}
By combining inequality \eqref{e:Lpsi1}  with convergence \eqref{e:lim2}, the bounds given by \eqref{e:mu_circ_upper} and \eqref{e:mu_circ_lower}, and the monotonicity of $L_{\psi }(\cdot ,\cdot )$ in its first argument, we obtain
\[
\lim_{b \to \infty} \sup_{b_{\circ}\in  [1, \widehat{b}\,] }\frac{\mathbb{P} \left( \frac{L_{\psi}(b_{\circ},b)}{b^{2}\psi(b^{-1})} < \rho(b) \right)}{\rho(b)^{1 / (2-\varkappa)}} = 
\lim_{b \to \infty}\frac{\mathbb{P} \left( \frac{L_{\psi}(\widehat{b}\,,b)}{b^{2}\psi(b^{-1})} < \rho(b) \right)}{\rho(b)^{1 / (2-\varkappa)}} = e^{c_{\varkappa,1}} \, \mathbb{P} \left( \int_{(0,1)} \widetilde{\ell}N_{\varkappa}(\mathrm{d}\widetilde{\ell} ) < 1 \right), 
\] 
and also
\[
\lim_{b \to \infty} \inf_{b_{\circ}\in  [1, \widehat{b}\,] }\frac{\mathbb{P} \left( \frac{L_{\psi}(b_{\circ},b)}{b^{2}\psi(b^{-1})} < \rho(b) \right)}{\rho(b)^{1 / (2-\varkappa)}} = 
\lim_{b \to \infty}\frac{\mathbb{P} \left( \frac{L_{\psi}(b)}{b^{2}\psi(b^{-1})} < \rho(b) \right)}{\rho(b)^{1 / (2-\varkappa)}} = e^{c_{\varkappa,1}} \, \mathbb{P} \left( \int_{(0,1)} \widetilde{\ell}N_{\varkappa}(\mathrm{d}\widetilde{\ell} ) < 1 \right). 
\] 

\end{proof}

We will require a statement asserting that, with positive probability, the process $ W(\cdot)$  attains a local maximum at a jump time of $\tau(\cdot)$, and remains below this peak over a sufficiently long subsequent interval.
\begin{lem} \label{l:argmax}
There exist $\lambda > 0$ and $\bar{e} > 1$ such that
\[
\mathbb{P} \left( \left\{ \argmax_{t \in [0, 1]} \, W\big(\tau(\lambda^{-1}) + t\big) = 0 \right\}  
\cap \left\{\tau(\bar{e}\lambda^{-1}) - \tau(\lambda^{-1}) > 1 \right\}  \right) > 0. 
\] 
\end{lem}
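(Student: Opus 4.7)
I would first reduce to a canonical form by Brownian scaling: since $W_c(t) := c^{-1} W(c^2 t)$ is a standard Wiener process with the induced relation $\tau(a) = c^2 \tau_c(a/c)$, taking $c = \lambda^{-1}$ turns the lemma into the existence of $L > 0$ and $\bar e > 1$ with
\[
\mathbb{P}\Bigl( \Bigl\{ \argmax_{s \in [0, L]} W(\tau(1) + s) = 0 \Bigr\} \cap \bigl\{ \tau(\bar e) - \tau(1) > L \bigr\} \Bigr) > 0, \qquad L = \lambda^2.
\]
My plan is to secure the second event by forcing a single large jump of $\tau$ just after $a = 1$, and to secure the first via the behavior of $W$ under the corresponding linear piece of the minimal concave majorant (MCM).

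Let $a^*$ denote the first jump time of $\tau$ on $(1, \infty)$ and $\Delta^*$ its size. By the Poisson representation \eqref{e:tau_int}--\eqref{e:mu_def}, the event $F := \{a^* \le \bar e,\, \Delta^* > L\}$ has probability $1 - \exp(-\mu((1, \bar e] \times (L, \infty))) > 0$, and on $F$ one immediately gets $\tau(\bar e) - \tau(1) \ge \Delta^* > L$, so the jump half of the conjunction is in hand. Furthermore, on $F$ the MCM has a linear segment of slope $1/a^* \in [1/\bar e, 1)$ joining its vertices at $\tau(1)$ and $\tau(1) + \Delta^*$. I would then appeal to the standard Brownian-bridge-below-a-line description of $W$ under its concave majorant (implicit in \cite{pG83}): conditionally on $\tau(\cdot)$, one has
\[
W(\tau(1) + s) = W(\tau(1)) + s/a^* - e(s), \qquad s \in [0, \Delta^*],
\]
where $e(\cdot)$ is a Brownian excursion of duration $\Delta^*$, independent of the MCM data.

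In these coordinates the argmax condition translates into $e(s) > s/a^*$ for all $s \in (0, L]$. Because $e(s)/\sqrt{s}$ converges in distribution to a non-degenerate positive random variable as $s \to 0^+$, the ratio $e(s)/s$ tends to $+\infty$ in probability, so for any sufficiently small $L$ and any $a^* \in (1, \bar e]$ the open condition $e > s/a^*$ on the interval $(0, L]$ has positive conditional probability; intersecting with $F$ then yields a positive-probability event on which both the argmax and the jump conditions hold, and the lemma follows by the scaling of the first step. The main technical obstacle will be the precise conditional description of $W$ given $\tau$ (i.e., the Brownian excursion representation of $W$ beneath a linear MCM piece) together with a quantitative comparison of the excursion $e(s)$ against the linear obstacle $s/a^*$ near $s = 0$, uniformly in the bounded parameters $a^*, \Delta^*$; the Poisson computation bounding $\mathbb{P}(F)$ is then routine.
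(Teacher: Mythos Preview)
Your approach is genuinely different from the paper's and can be made to work, but as written it has a real gap and leans on a structural fact you have not secured.

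\textbf{Comparison with the paper.} The paper never invokes the excursion decomposition under the MCM. It works directly on the fixed interval $[0,2]$: using that Brownian motion has a \emph{steep peak} at its maximum, namely $\limsup_{\epsilon\downarrow 0}(W(t^{(0)}\pm\epsilon)-W(t^{(0)}))/\epsilon=-\infty$ a.s.\ (cf.\ \cite{PM10}), it shows that for all sufficiently small $\lambda$ the $\argmax$ of $W(t)-\lambda t$ on $[0,2]$ coincides with that of $W(t)$; hence $\mathbb{P}(\{t^{(0)}=t^{(\lambda)}\}\cap\{t^{(0)}\le 1\})>0$ for some $\lambda$. An elementary independence argument with the post-time-$2$ Brownian motion and the exponential law of $\sup_{t'\ge 0}(\widetilde W(t')-\lambda t')$ then upgrades $\{t^{(0)}\le 1\}$ to $\{\tau(\lambda^{-1})\le 1\}$, after which $\bar e$ is simply taken large. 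This is shorter and uses only results already quoted in the paper. Your route, by contrast, imports the (true but nontrivial) fact that, conditionally on the MCM vertex process, the Brownian path under each face is a Brownian excursion of the corresponding length, independent of the vertex data; this is not in \cite{pG83} as such and would need a separate reference or proof.

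\textbf{The gap.} From ``$e(s)/s\to\infty$ in probability'' you cannot conclude that the \emph{uniform} event $\{e(s)>s/a^*\ \text{for all}\ s\in(0,L]\}$ has positive probability: pointwise convergence in probability says nothing about $\inf_{s\in(0,L]}(e(s)-cs)$. What you actually need is the almost-sure statement $\lim_{s\downarrow 0}e(s)/s=\infty$. This does hold: on $[0,L]$ with $L<\Delta^*$ the excursion law is absolutely continuous with respect to a $\mathrm{Bes}(3)$ process started at $0$, and for $R=|B^{(3)}|$ the time-inversion $u\mapsto uR(1/u)$ is again $\mathrm{Bes}(3)$, whose transience gives $R(s)/s\to\infty$ a.s.\ as $s\downarrow 0$. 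With that, $\mathbb{P}(e(s)>s\ \text{for all}\ s\in(0,L])\to 1$ as $L\downarrow 0$, which is what you want. Two smaller points: your formula $\mathbb{P}(F)=1-\exp(-\mu((1,\bar e]\times(L,\infty)))$ is the probability that \emph{some} jump in $(1,\bar e]$ exceeds $L$, not that the \emph{first} one does (only positivity of $\mathbb{P}(F)$ is needed, and that is clear); and you should guard against $\Delta^*$ being only slightly larger than $L$, e.g.\ by requiring $\Delta^*>2L$ in $F$, so that the absolute-continuity argument on $[0,L]$ applies cleanly.
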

\begin{proof}
For $\lambda^{\prime} \ge 0$, denote the random variable
\[
t^{(\lambda^{\prime})} := \argmax_{t \in [0, 2]} \left( W(t) - \lambda^{\prime} t \right)
\]
and the event
\[
	B_{0}^{(M)}(\lambda^{\prime}) := \Big\{t^{(0)} = t^{(\lambda^{\prime})} \Big\}.
\]
According to \cite[Exercise~1.14]{PM10}, we have
\[
	\limsup_{\epsilon \downarrow  0} \frac{W(t^{(0)}+\epsilon) - W(t^{(0)})}{\epsilon} \stackrel{\text{a.s.}}{=} -\infty.
\]
Since the process $\left\{ W(2-\,\backvec t) - W(\, \backvec t) \mid \ \backvec t \in [0,2] \right\}$ is a Wiener process on $[0,2]$, we obtain an analogous statement for the left neighborhood of the maximum time
\[
	\limsup_{\epsilon \downarrow  0} \frac{W(t^{(0)}-\epsilon) - W(t^{(0)})}{\epsilon} \stackrel{\text{a.s.}}{=} -\infty.
\]
It follows that, with probability 1, for sufficiently small $\lambda^{\prime} > 0$, the event $B_{0}^{(M)}(\lambda^{\prime})$ occurs, hence, there exists $\lambda > 0$ such that
\[
\mathbb{P} \left( B_0^{(M)}(\lambda) \cap \left\{ t^{(0)} \le 1 \right\}  \right) > 0.
\]

Denote the process $\widetilde{W} := \left\{ W(t^{\prime}+2) - W(2) \mid t^{\prime} \ge 0 \right\}$. This is a Wiener process on $[0,\infty)$ independent of $W(\cdot)$ on $[0,2]$. Using the definition \eqref{def:tau} of $\tau(\cdot)$, we obtain
\[
\mathbb{P} \left(  B_0^{(M)}(\lambda) \cap \left\{ \tau(\lambda^{-1}) \le 1 \right\}  \right) \ge \mathbb{P} \left(B_0^{(M)}(\lambda)\cap \left\{ t^{(0)} \le 1 \right\}\right) \cdot  \mathbb{P}\left(\sup_{t^{\prime} \ge 0 } \left( \widetilde{W}(t^{\prime}) - \lambda t^{\prime}  \right) < \lambda \right) > 0.
\]
The positivity of the latter factor follows from the fact that, by \cite[Corollary~2]{RP81}, the random variable $\sup_{t^{\prime} \ge 0 } \left( \widetilde{W}(t^{\prime}) - \lambda t^{\prime} \right)$ is exponentially distributed with parameter $2\lambda$.

Take sufficiently large $\bar{e} > 1$ such that
\[
\mathbb{P} \left( B_0^{(M)}(\lambda) \cap \left\{ \tau(\lambda^{-1}) \le 1 \right\} \cap \left\{ \tau(\bar{e} \lambda^{-1}) - \tau(\lambda^{-1}) > 1 \right\} \right) > 0.
\]
Such $\bar{e}$ exists because $\mathbb{P} \left( \left\{ \tau(e^{\prime} \lambda^{-1}) - \tau(\lambda^{-1}) > 1 \right\} \right)$ tends to 1 as $e^{\prime} \to \infty$. This concludes the proof of the lemma with the chosen values of  $\lambda$ and $\bar{e}$.

\end{proof}

 \subsection*{Proof of Theorem \ref{t:liminf_up}}
Let us take $\lambda > 0, \bar{e} > 1$ satisfying the assertion of Lemma \ref{l:argmax}. For integers $j \ge 1$, consider the sequences
\begin{eqnarray*}
	b_{j} &:=& \bar{e}^{\,j}, \\
	\rho_{j} &:=& g(b_{j}^2).
\end{eqnarray*}
For integers $j \ge i \ge 1$, consider the events
\begin{eqnarray*}
	B^{(\tau)}_{j} &:=& \left\{ \tau(b_{j+1}) - \tau(b_{j}) > (\lambda b_{j})^2 \right\}, 
		      \\ 
	B^{(M)}_{j} &:=& \Big\{ \argmax_{t \in [0, \, (\lambda b_{j})^2]}\, W(\tau(b_{j})+t) = 0 \Big\}, 
\\
	B^{(\tau,M)}_{j} &:=& B^{(\tau)}_{j} \cap B^{(M)}_{j},
\\
	B^{(L)}_{j} &:=& \left\{ L_{\psi }(b_{j}) \le b_{j}^{2}\psi(b_{j}^{-1}) \rho_{j}\right\}, \\ 
	B^{(L)}_{i,j} &:=& \left\{ L_{\psi }(b_{i}, b_{j}) \le b_{j}^{2}\psi(b_{j}^{-1}) \rho_{j}  \right\}, \\ 
	B_{j} &:=& B^{(\tau, M)}_{j} \cap B^{(L)}_{j}.
\end{eqnarray*}
According to 
\cite[Corollary~2]{RP81}, the following processes are independent
\begin{eqnarray*}
	W_{j}^{- } &:=& \left\{ W(t) \, | \, t \in [0, \tau(b_{j})] \right\}, 
\\ 
	W_{j}^{+} &:=& \left\{ W(\tau(b_{j})+t) - W(\tau(b_{j})) \, | \, t \ge 0 \right\}.
\end{eqnarray*} 
By definition, the independence of this processes means that the natural sigma-algebra of the process $W_{j}^{+}$ is independent of the sigma-algebra
\[
	\mathcal{F}_{\tau(b_{j})} := \sigma \left( \mathcal{F}_{\tau(b_{j})-\text{past}} \cup \mathcal{F}_{\tau(b_{j})-\text{pres}} \right),
\]
where (cf. \cite{wM76}) 
\begin{eqnarray*}
	\mathcal{F}_{\tau(b_{j})-\text{past}} &:=& \sigma \left( \left\{ A \cap \left\{ \tau(b_{j}) > t \right\} \, | \, A \in \sigma\big( W(t^{\prime}), \, t^{\prime} \in  [0, t]  \big) , t \ge 0  \right\} \right) , \\
	\mathcal{F}_{\tau(b_{j})-\text{pres}} &:=& \sigma \left( \left\{ A \cap \left\{ \tau(b_{j}) = S \right\} \, | \, A \in \mathcal{F}_{S}, S \text{ is a stopping time with respect to } W(\cdot )   \right\} \right) .
\end{eqnarray*} 
Here $\mathcal{F}_{S}$ denotes the standard sigma-algebra of $S$-past. Note that $\tau(b_{j})$ is not a stopping time.
\\
It follows that for $j \ge i \ge 1$,
\begin{eqnarray*}
	B_{j}^{(\tau, M)} &\in& \mathcal{F}(W_{j}^{+}) \cap \mathcal{F}_{\tau(b_{j+1})},
\\
	B_{i,j}^{(L)} &\in& \mathcal{F}(W_{i}^{+}) \cap \mathcal{F}_{\tau(b_{j})}.
\end{eqnarray*} 
Thus, for $j \ge i+1 \ge 1$, the events $B^{(L)}_{i}, B_{i}^{(\tau, M)}, B^{(L)}_{i+1, j}, B_{j}^{(\tau, M)}$ are independent.

Using the self-similarity of $W(\cdot )$, together with Lemma \ref{l:argmax}, we obtain
\[
	c_{\tau} := \mathbb{P} \left(B^{(\tau, M)}_{j}\right) = \mathbb{P} \left( \left\{ \argmax_{t \in [0, 1]} \, W\big(\tau(\lambda^{-1}) + t\big) = 0 \right\}  
\cap \left\{\tau(\bar{e}\lambda^{-1}) - \tau(\lambda^{-1}) > 1 \right\}  \right)   > 0,
\]
and $c_{\tau}$ does not depend on $j$.
By Lemma \ref {l:liminf_up}, applied with the function $\rho: b \mapsto g(b^2)$ and parameters $\xi_{\circ}, \xi^{\circ}$ as specified in the assumptions of Theorem \ref{t:liminf_up}, we have, for sufficiently large $j$, that
\[
     \mathbb{P}( B^{(L)}_{j} ) \ge \frac{c_{\varkappa ,5}}{2} \rho_{j}^{1 / (2-\varkappa)},
\] 
and hence
\begin{equation} \label{e:bj}
\mathbb{P} (B_{j}) =  c_{\tau} \cdot \mathbb{P}(B^{(L)}_{j})  \ge \frac{c_{\tau}c_{\varkappa,5}}{2} \rho_{j}^{1 / (2-\varkappa)}.
\end{equation} 
Since $g(\cdot )$ is a slowly varying function satisfying condition \eqref{cond:g2}, it follows that
\[
\sum_{j=1}^{\infty} \mathbb{P} (B_{j}) = \infty.   
\] 

To apply the generalization of the Borel--Cantelli lemma, namely the Kochen--Stone theorem (see \cite{KS64}), it suffices to show that
\[
\liminf\limits_{n \to \infty} \frac{\sum_{i,j=1}^{n} \mathbb{P} (B_{i} \cap B_{j}) }{\left( \sum_{j=1}^{n} \mathbb{P} (B_{j}) \right) ^2} \le 1.
\] 
For $j \ge i+1 \ge 2$,
we have
 \begin{equation}\label{e:bibj}
\mathbb{P} (B_{i} \cap B_{j}) \le \mathbb{P} \left( B^{(L)}_{i} \cap B_{i}^{(\tau, M)} \cap B^{(L)}_{i+1, j}\cap B_{j}^{(\tau, M)} \right) = c _{\tau}^2 \cdot  \mathbb{P} (B^{(L)}_{i})\cdot \mathbb{P} (B^{(L)}_{i+1,j}).       
\end{equation} 
Let us split the index set $(i,j) \in \N \times \N$ into two parts.
Define
\begin{equation*} \label{cond:ij}
	J^2 := \left\{ (i,j) \in \N \times \N \ | \ j \ge i+1 \ge 2, \ \ \frac{4+\xi_{\circ}}{2-\varkappa} \log \log b_{j}\ge (j-i-1)\log \bar{e}\right\}. 
\end{equation*}
It is true that
\[
   (i,j) \in J^2  \Longleftrightarrow b_{j} \ge b_{i+1} \ge \widehat{b_{j}} = b_{j}(\log b_{j})^{-(4+\xi_{\circ}) / (2-\varkappa)}.
\] 
By Lemma \ref {l:liminf_up}, applied with the function $\rho: b \mapsto g(b^2)$ and parameters $\xi_{\circ}, \xi^{\circ}$ as specified in the assumptions of Theorem \ref{t:liminf_up}, we have, for sufficiently large $j$ and $(i,j) \not\in J^2$, that
\begin{equation} \label{e:ij_out}
\mathbb{P} (B^{(L)}_{i+1, j}) \le (1+\delta)\mathbb{P} (B^{(L)}_{j}).
\end{equation} 

For all $(i,j) \in J^2$, we use the bound
\begin{equation} \label{e:Bij}
\mathbb{P} (B^{(L)}_{i+1,j}) \le \exp (-\mu_1(Q_{\psi }(b_{i+1}, b_{j}))),
\end{equation} 
where
\[
Q_{\psi }(b_{\circ}, b) := \left\{(s,z) \, | \, s \in (b_{\circ}/b,1], z \in (0,\infty),  z^{2} s^{2-\varkappa} \frac{\theta((sb)^{-1})}{\theta(b^{-1})} \ge \rho(b)\right\}.
\]
Fix $\delta > 0$. For $b \ge b_{\circ} \ge 1$, denote
\[
z_1(b_{\circ},b) :=  \sqrt{(1+\delta)\rho(b)} \left(\frac{b}{b_{\circ}}\right)^{1 - \varkappa / 2}.
\] 
For sufficiently large $b$, and for all $\sqrt{(1+\delta)\rho(b)} \le z \le \log b$ and $b_{\circ} \in [\,\widehat{b}, b]$, where $\widehat{b}$ is as defined in Lemma \ref{l:liminf_up}, the function $s_1(\cdot ,\cdot )$ defined by \eqref{def:s1} satisfies the analogue of inequality \eqref{e:s1}
\[
z \le z_1(b_{\circ},b)\Longleftrightarrow s_1(\rho(b), z) \ge \frac{b_{\circ}}{b}.
\]
It follows that, as $b \to \infty$, for all $b_{\circ} \in [\,\widehat{b}, b]$, we have, similarly to the calculation in \eqref{e:mu_b_est_low}, that
 \begin{eqnarray} \nonumber
	 \mu_1\left(Q_{\psi }(b_{\circ},b)\right) &=& \int_{0}^{\infty} \int_{b_{\circ} / b}^{1} 1_{\left\{ z^2s^{2-\varkappa}\frac{\theta((sb)^{-1})}{\theta(b^{-1})} \ge \rho(b) \right\} }   \mu_1(\mathrm{d}s \times \mathrm{d}z)
\\ &\ge&  \nonumber
\int_{\sqrt{(1+\delta)\rho(b)} }^{\min(z_1(b_{\circ},b),\log b)} \int_{s_1(\rho(b),z)}^{1} 2 \varphi (z) s ^{-1}  \mathrm{d}s \mathrm{d}z
+
\int^{\log b}_{\min(z_1(b_{\circ}, b), \, \log b)} \int_{b_{\circ} / b}^{1} 2 \varphi (z) s ^{-1}  \mathrm{d}s \mathrm{d}z
\\ &\ge& \label{e:mu_bb_2}
\frac{2}{2-\varkappa}|\log \rho(b)|\left(\frac{1}{2} - \overline{\Phi}(z_1(b_{\circ},b))\right) + c_{\varkappa,6} 
+ 2\log \frac{b}{b_{\circ}} \overline{\Phi}(z_1(b_{\circ}, b)) + o(1).
\end{eqnarray} 
Here $c_{\varkappa,6}:= \frac{1}{\sqrt{2\pi} (2-\varkappa)}\int_{0 }^{1} \zeta^{-1 / 2} e^{-\zeta / 2} \log \zeta  \, \mathrm{d} \zeta - \frac{2\log(1+\delta)}{2-\varkappa}< 0$, and $\overline{\Phi}(\cdot )$ denotes the tail distribution function of the standard normal law.
By condition \eqref{cond:g3}, for sufficiently large $j$, we have
\[
	|\log \rho(b_{j})| \ge \xi^{\circ} \log \log b_{j}.
\]
Combining this estimate with \eqref{e:mu_bb_2}, for sufficiently large $i$ and $(i,j) \in J^2$, we obtain
\begin{eqnarray*}
	\frac{\mu_1\left(Q_{\psi }(b_{i+1},b_{j})\right)}{\log \bar{e}} &\ge& 
	\frac{2\xi^{\circ}\log \log b_{j}}{(2-\varkappa)\log \bar{e}} \left(\frac{1}{2} - \overline{\Phi}(z_1(b_{i+1},b_{j}))\right) + 2(j-i-1) \overline{\Phi}(z_1(b_{i+1}, b_{j})) + c_{\varkappa,6}-1
	\\ &\ge&\left(\frac{\xi^{\circ}}{4+\xi_{\circ}}+\overline{\Phi}(z_1(b_{i+1}, b_{j}))\left(2-\frac{2\xi^{\circ}}{4+\xi_{\circ}}\right)\right) (j-i-1) + c_{\varkappa,6} - 1 
	\\ &\ge& \frac{\xi^{\circ}}{4+\xi_{\circ}}(j-i-1) + c_{\varkappa,6}-1.
\end{eqnarray*} 
By using \eqref{e:Bij}, for sufficiently large $i$, it holds that (with the summation taken over $j$)
\begin{equation} \label{e:ij_in}
\sum_{(i,j) \in J^2} \mathbb{P} (B^{(L)}_{i+1, j}) \le e^{-c_{\varkappa,6}+1} \sum_{k=0}^{\infty} \exp\left(-\frac{\xi^{\circ}\log \bar{e}}{4+\xi_{\circ}}k\right) =: c_{\varkappa,7} < \infty.
\end{equation} 

Thus, by combining \eqref{e:bibj} with estimates \eqref{e:ij_out} and \eqref{e:ij_in}, we obtain that there exists some $n_0 \in \mathbb{N}$ such that for all $n > n_0$, we have
\[
\sum_{i,j=1}^{n} \mathbb{P} (B_{i} \cap B_{j}) \le \sum_{i,j=1}^{n_0} \mathbb{P} (B_{i} \cap B_{j})+c_{\tau}^2c_{\varkappa,7}\sum_{i=1}^{n}  \mathbb{P} (B^{(L)}_{i}) + (1+\delta) c_{\tau}^2\sum_{i,j=1}^{n} \mathbb{P} (B^{(L)}_{i})\cdot \mathbb{P} (B^{(L)}_{j}).
\] 
By combining this estimate with equality \eqref{e:bj} and the fact that the series $\sum_{j} \mathbb{P} (B_{j})$ diverges, we obtain
\[
\liminf\limits_{n \to \infty} \frac{\sum_{i,j=1}^{n} \mathbb{P} (B_{i} \cap B_{j}) }{\left( \sum_{j=1}^{n} \mathbb{P} (B_{j}) \right) ^2} \le 1+\delta.
\] 
By letting $\delta \to 0$ and applying the Kochen--Stone theorem, we conclude that, with probability 1, the events $B_{j}$ occur infinitely often.

For all sufficiently large $j \ge 1$, we have
\[
	\left\{ \tau(b_{j}) \le (\lambda b_{j})^2 \right\}  \subset B^{(L)}_{j},
\] 
because if $B^{(L)}_{j}$ occurs but $\tau(b_{j}) > (\lambda b_{j})^2$, then by Proposition \ref{p:L_S} we obtain
\[
L_{\psi }(b_{j}) \ge  \tau(b_{j}) \psi (b_{j}^{-1}) > (\lambda b_{j})^2\psi (b_{j}^{-1}),
\]  
which contradicts $B^{(L)}_{j}$ for sufficiently large $j$.
Thus, the occurrence of the event $B_{j}$ implies that
\[
	\max_{t \in [\tau(b_{j}), (\lambda b_{j})^2]} W(t) = W(\tau(b_{j})).
\] 
If the event $B_j$ occurs, then by Proposition \ref{p:maj_form}, the solution to the problem
$|h|_{(\lambda b_{j})^2}^\psi\to \min, h\in M^{\prime}_{(\lambda b_{j})^2,r}$ is constant on the interval $[\tau(b_{j}), (\lambda b_{j})^2 ]$, which implies
$I_W^{\psi}((\lambda b_{j})^2,r) = I_W^{\psi}(\tau(b_{j}),r)$. From this, given that $B^{(L)}_{j}$ occurs, and by applying Proposition \ref{p:L_I}, it follows with probability 1 that
\[
	\liminf_{T \to \infty}\frac{I_{W}^{\psi}(T,r)}{T\psi(T^{-1 / 2})g(T)} \le \liminf_{j \to \infty}\frac{I_W^{\psi}((\lambda b_{j})^2,r)}{(\lambda b_{j})^2 \psi((\lambda b_{j})^{-1})\rho_{j}}
	\le \lambda^{\varkappa -2}\liminf_{\stackrel{j \to \infty}{B_j \text{ occurs  } }}\frac{L_{\psi }(b_{j})}{b_{j}^2\psi (b_{j}^{-1})\rho_{j}}
	\le \lambda^{\varkappa -2}.  
\] 
The first inequality follows from the property that $g(\cdot )$ is a slowly varying function.
This inequality also holds if $g(\cdot)$ is replaced by $p g(\cdot)$ for any $p > 0$, which concludes the proof of the theorem.

\subsection{Proof of Corollary \ref{cor:Hirsch}}
Consider $\psi(u) := u$ for $u > 0$. For a fixed $T > 0$, let $\chi_{*}(\cdot)$ denote the solution to the problem
$|h|_{T}^\psi \to \min, \; h \in M'_{T,r}$,
whose structure
is described in Proposition \ref{p:maj_form}. Since, for sufficiently large $T$, we have $\max_{0 \le t \le T} W(t) > r$, point (b) of that proposition applies; in particular $\chi_{*}(T) = \max_{0 \le t \le T} W(t)$. With probability 1, for sufficiently large $T$,
\[
I_W^{\psi}(T,r) = |\chi_{*}|_T^\psi = \int_{0}^{T} \chi_{*}^{\prime}(t) \mathrm{d}t = \chi_{*}(T) - \chi_{*}(0) = \max_{0 \le t \le T} W(t) - r.
\]
By applying Theorems \ref{t:liminf_low} and \ref{t:liminf_up} to the given functions $\psi(\cdot)$ and $g(\cdot)$, we obtain the desired result.

\subsection{Proof of Theorem \ref{t:limsup}}

{\bf Lower bound.}  
For a fixed $T > 0$, let $\chi_{*}(\cdot)$ denote the solution to the problem
$|h|_{T}^\psi \to \min, \; h \in M'_{T,r}$,
whose structure
is described in Proposition \ref{p:maj_form}. Then, by Jensen's inequality, we have
\[
I_{W}^{\psi}(T,r) = T \int_0^T \frac{\psi(\chi_{*}^{\prime}(t))}{T} \, \mathrm{d}t 
\ge T \psi \left(\int_0^T \frac{\chi_{*}^{\prime}(t)}{T} \, \mathrm{d}t \right) 
\ge T \psi\left(\frac{\chi_{*}(T) - r}{T}\right) 
\ge T \psi\left(\frac{W(T) - r}{T}\right).
\]
By the law of the iterated logarithm for $W(\cdot)$, and by the monotonicity of 
$\psi(\cdot)$ near zero, for any $\delta \in (0,1)$, with probability 1, it holds that
\[
\limsup_{T \to \infty} \frac{I_W^\psi(T,r)}{T \psi \left((1-\delta) \sqrt{\frac{2 \log \log T}{T}} \right)} \ge 1.
\]
It follows that, with probability 1,
\[
\limsup_{T \to \infty} \frac{I_W^\psi(T,r)}{T \psi \left(\sqrt{\frac{\log \log T}{T}} \right)} \ge (1-\delta)^{\varkappa} 2^{\varkappa / 2}.
\]
By letting $\delta \to 0$, we obtain the desired inequality.
\\ \\
{\bf Upper bound.}  
Fix $\delta > 0$.  
By the law of the iterated logarithm, there exists a random $T_{\circ} > 0$ such that for all $t > T_{\circ}$,
\[
W(t) < (1+\delta) \sqrt{2 t \log \log t}.
\]
Let $\chi_{*}(\cdot)$ denote the solution to the problem
$|h|_{T_{\circ}}^\psi \to \min, \; h \in M'_{T_{\circ}, r}$.  
For $t > 0$, consider the random function
\[
h(t) := 
\begin{cases}
\max\left(\chi_{*}(t), (1+\delta) \sqrt{2 t \log \log t} \right), & t \in [0, T_{\circ}], \\
(1+\delta) \sqrt{2 t \log \log t}, & t > T_{\circ}.
\end{cases}
\]
For any $T > 0$, the restriction of $h(\cdot)$ to $[0,T]$ is an admissible approximation of $W(\cdot)$ on this interval. Therefore, with probability 1,
\[
I_W^\psi(T,r) \le |h|_T^\psi.
\]
With probability 1, as $t \to \infty$, we have
\[
h'(t) = (1+\delta) \sqrt{\frac{\log \log t}{2 t}} (1 + o(1)).
\]
For any $\delta_1 > 0$, there exists a random time $t_{\circ} > T_{\circ}$, independent of $T$, such that for all $t_{\circ} < t \le T$, as $T \to \infty$, it holds that
\begin{eqnarray*}
\frac{\theta(h'(t))}{\theta \left(\sqrt{\frac{\log \log T}{2 T}} \right)} &\le& (1+\delta_1) \exp \left( - \int_{\sqrt{\frac{\log \log T}{2 T}}}^{\sqrt{\frac{\log \log t}{2 t}}} \frac{\eps(v)}{v} \mathrm{d}v \right) \\
&\le& (1+\delta_1) \exp \left( \frac{(\log T)^{1-\alpha} - (\log t)^{1-\alpha}}{(1-\alpha) 2^{1-\alpha}} (1 + o(1)) \right) \\
&=& (1+\delta_1) \exp \left( O \left( \frac{\log \frac{T}{t}}{(\log T)^\alpha} \right) \right).
\end{eqnarray*}
Therefore, with probability 1, we obtain
\begin{eqnarray*}
\limsup_{T \to \infty} \frac{|h|_T^\psi}{T \psi \left(\sqrt{\frac{\log \log T}{2 T}}\right)} &=& \limsup_{T \to \infty} \int_{t_{\circ}}^{T} \frac{\psi(h'(t))}{T \psi \left(\sqrt{\frac{\log \log T}{2 T}} \right)} \, \mathrm{d}t \\
&\le& (1+\delta)^{\varkappa} \limsup_{T \to \infty} \int_{t_{\circ}}^{T} \left(\frac{t}{T}\right)^{-\varkappa/2} \frac{\theta(h'(t))}{\theta \left(\sqrt{\frac{\log \log T}{2 T}} \right)} \frac{\mathrm{d}t}{T} \\
&=& (1+\delta)^{\varkappa} \limsup_{T \to \infty} \int_{t_{\circ} T^{-1}}^{1} \eta^{-\varkappa/2} (1+\delta_1) \exp \left( O\left(\frac{|\log \eta|}{(\log T)^\alpha}\right) \right) \, \mathrm{d}\eta \\
&=& (1+\delta)^{\varkappa} \limsup_{T \to \infty} \frac{1+\delta_1}{1 - \varkappa/2 - O((\log T)^{-\alpha})} \\
&=& \frac{(1+\delta)^{\varkappa} (1+\delta_1)}{1 - \varkappa/2}.
\end{eqnarray*}
By taking $\delta$ and $\delta_1$ to zero successively, we obtain the desired inequality.

\section{Proofs of Theorems for $\varkappa \ge 2$} \label{sec:4}
\subsection{Preliminaries}

In the following lemma, we begin to describe the conditions under which $m_{\psi}(\cdot)$ serves as a proper normalization for $L_{\psi}(\cdot)$ to converge almost surely to a constant.

\begin{lem} \label{l:as}
Define the function  
\[
q(b) := \frac{\mathbb{D}L_\psi(b)}{(\mathbb{E} L_{\psi}(b))^{2}}.
\]
Then, if there exists some $\sigma > 0$ such that, as $b \to \infty$, 
\begin{equation} \label{cond:log}
       q(b) =  O\left( (\log b)^{-\sigma} \right), 
\end{equation}
and, for some $\gamma > \frac{1}{\sigma}$, it holds that
\begin{equation} \label{cond:m}
	\lim_{n \to \infty} \frac{\mathbb{E} L_{\psi}\left(\exp(n^{\gamma})\right)} 
        {\mathbb{E} L_\psi(\exp((n+1)^{\gamma}))} = 1,
\end{equation}
then
\[
     \lim\limits_{b \to \infty} \frac{L_\psi(b)}{\mathbb{E} L_{\psi}(b)} \stackrel{\text{a.s.}}{=} 1.
\]
\end{lem}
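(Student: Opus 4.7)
\medskip

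\noindent\textbf{Proof proposal.}
My plan is the standard Chebyshev--Borel--Cantelli along a subsequence, followed by a monotone sandwich argument to fill in between. The key fact to exploit throughout is that $L_\psi(b)$ is non-decreasing in $b$ (since $\tau(\cdot)$ is non-decreasing and $\psi(a^{-1})\ge 0$), and hence so is $\mathbb{E} L_\psi(b)$.

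First, I would set $b_n := \exp(n^{\gamma})$ and apply Chebyshev's inequality to get, for every $\varepsilon>0$,
\[
   \mathbb{P}\!\left( \left| \frac{L_\psi(b_n)}{\mathbb{E} L_\psi(b_n)} - 1 \right| > \varepsilon \right)
   \le \varepsilon^{-2} q(b_n)
   = O\!\left( (\log b_n)^{-\sigma} \right)
   = O(n^{-\gamma\sigma}).
\]
Since $\gamma\sigma > 1$, the series $\sum_n n^{-\gamma\sigma}$ converges, so by the Borel--Cantelli lemma
\[
   \lim_{n\to\infty} \frac{L_\psi(b_n)}{\mathbb{E} L_\psi(b_n)} \stackrel{\text{a.s.}}{=} 1.
\]

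Second, for arbitrary $b\ge b_1$, choose $n=n(b)$ with $b_n\le b < b_{n+1}$. Monotonicity of $L_\psi(\cdot)$ and of $\mathbb{E} L_\psi(\cdot)$ yields the two-sided sandwich
\[
   \frac{L_\psi(b_n)}{\mathbb{E} L_\psi(b_n)}\cdot \frac{\mathbb{E} L_\psi(b_n)}{\mathbb{E} L_\psi(b_{n+1})}
   \le \frac{L_\psi(b)}{\mathbb{E} L_\psi(b)}
   \le \frac{L_\psi(b_{n+1})}{\mathbb{E} L_\psi(b_{n+1})}\cdot \frac{\mathbb{E} L_\psi(b_{n+1})}{\mathbb{E} L_\psi(b_n)}.
\]
By condition \eqref{cond:m}, the outer ratios tend to $1$, while the subsequence limit from the first step forces both extremes to tend a.s.\ to $1$. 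This yields the claim.

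The only delicate point is the calibration of parameters: the Borel--Cantelli step needs $\gamma\sigma>1$, which is precisely the hypothesis $\gamma>1/\sigma$, and the spacing of the $b_n$ must be slow enough for \eqref{cond:m} to produce $\mathbb{E} L_\psi(b_n)/\mathbb{E} L_\psi(b_{n+1})\to 1$ but fast enough for the tail sum to converge. The choice $b_n=\exp(n^\gamma)$ is tailored to do both simultaneously, and no heavier machinery should be needed.
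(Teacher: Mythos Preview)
Your proposal is correct and follows essentially the same route as the paper: Chebyshev plus Borel--Cantelli along the subsequence $b_n=\exp(n^\gamma)$, then a monotone sandwich using condition~\eqref{cond:m} to pass to the full limit. The paper writes the sandwich as $L_\psi(b_{n+1})/\mathbb{E}L_\psi(b_n)\ge L_\psi(b)/\mathbb{E}L_\psi(b)\ge L_\psi(b_n)/\mathbb{E}L_\psi(b_{n+1})$, which is exactly your inequality with the ratios multiplied out.
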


\begin{proof}
The function $q(\cdot)$ is well-defined due to representation \eqref{e:L_int}.
For any $\delta > 0$ and $b \ge 1$, define the sets  
\[
    D(b) := \left\{ \left\lvert \frac{L_\psi(b)}{\mathbb{E} L_{\psi}(b)} -1 \right\rvert > \delta \right\}.
\]
By Chebyshev's inequality, we have
\[
\mathbb{P}(D(b)) \le \frac{q(b)}{\delta^{2}}.
\]
Let $b_{n} := \exp(n^{\gamma})$. From $\gamma > \frac{1}{\sigma}$ and condition \eqref{cond:log}, it follows that
\[
\sum_{n=1}^{\infty} \mathbb{P}(D(b_{n})) < \infty.
\]
This implies, by the Borel--Cantelli lemma, that
\begin{equation} \label{e:as_seq}
\lim\limits_{n \to \infty} \frac{L_\psi(b_{n})}{\mathbb{E} L_\psi(b_{n})} \stackrel{\text{a.s.}}{=} 1.
\end{equation}
Let $b \in [b_{n}, b_{n+1}]$. Since $L_\psi(\cdot)$ and $m_{\psi}(\cdot)$ are non-decreasing, with probability 1 it holds that 
\begin{equation} \label{e:lr_est}
\frac{L_\psi(b_{n+1})}{\mathbb{E} L_\psi(b_{n})} \ge \frac{L_\psi(b)}{\mathbb{E} L_{\psi}(b)} \ge \frac{L_\psi(b_{n})}{\mathbb{E} L_{\psi}(b_{n+1})}.
\end{equation}
By condition \eqref{cond:m}, 
\[
\lim\limits_{n \to \infty} \frac{\mathbb{E} L_\psi(b_{n})}{\mathbb{E} L_{\psi}(b_{n+1})} = 1.
\]
Therefore, by \eqref{e:as_seq}, the right-hand side and left-hand side in \eqref{e:lr_est} converge to 1 almost surely.
\end{proof}

Now we describe sufficient conditions for the almost sure convergence of 
$\frac{L_{\psi}(\cdot)}{m_{\psi}(\cdot)}$ to a constant, stated in terms of the function $\psi(\cdot)$.

\begin{lem} \label{l:start}
Let $\psi(\cdot)$ be a convex function of the form \eqref{cond:reg} with $\varkappa = 2$ and some function $\theta(\cdot)$, which is not necessarily slowly varying. Suppose further that $\theta(\cdot)$ satisfies condition \eqref{cond:theta} and either condition \eqref{cond:alpha} or \eqref{cond:alpha_alt}.  
Then
\begin{equation*} \label{e:asw}
	\lim\limits_{b \to \infty} \frac{L_\psi(b)}{m_\psi(b)} \stackrel{\text{a.s.}}{=} 1.
\end{equation*}
\end{lem}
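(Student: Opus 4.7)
The plan is to apply Lemma~\ref{l:as} to $L_\psi(\cdot)$; its conclusion gives $L_\psi(b)/\mathbb{E}L_\psi(b) \to 1$ almost surely, so the lemma will follow once I verify that $\mathbb{E}L_\psi(b) = m_\psi(b)$. The first step is therefore to compute the mean and variance of the Poisson stochastic integral \eqref{e:L_int}. Using the substitution $z = \sqrt{l}/a$ together with the moments $\int_0^\infty z^2\varphi(z)\,\mathrm{d}z = 1/2$ and $\int_0^\infty z^4\varphi(z)\,\mathrm{d}z = 3/2$, a direct calculation yields
\[
\mathbb{E}L_\psi(b) = m_\psi(b), \qquad \mathbb{D}L_\psi(b) = 3\int_1^b a^3\psi(a^{-1})^2\,\mathrm{d}a = 3\int_1^b a^{-1}\theta(a^{-1})^2\,\mathrm{d}a,
\]
where the second equality uses $\varkappa = 2$; correspondingly $m_\psi(b) = \int_1^b a^{-1}\theta(a^{-1})\,\mathrm{d}a$.

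The second step is to bound $q(b) = \mathbb{D}L_\psi(b)/m_\psi(b)^2$. After the change of variables $v = \log a$, condition \eqref{cond:theta} gives, for any small $\eps>0$ and all sufficiently large $b$,
\[
m_\psi(b) \ge C(\log b)^{\alpha_\diamond + 1 - \eps}, \qquad
\mathbb{D}L_\psi(b) \le C' \cdot
\begin{cases}
(\log b)^{2\alpha^\diamond + 1 + \eps}, & 2\alpha^\diamond > -1,\\
1, & 2\alpha^\diamond < -1,
\end{cases}
\]
with an additional $\log\log b$ factor at the borderline $2\alpha^\diamond = -1$. Thus $q(b) = O((\log b)^{-\sigma})$, where under \eqref{cond:alpha} one may take $\sigma$ slightly below $1 - 2(\alpha^\diamond - \alpha_\diamond)$, and under \eqref{cond:alpha_alt} slightly below $2(\alpha_\diamond + 1)$; in both cases $\sigma > 0$.

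The third step is to verify condition \eqref{cond:m}. Writing $\Theta(v) := \theta(e^{-v})$, so that $m_\psi(e^V) = \int_0^V \Theta(v)\,\mathrm{d}v$, and taking $b_n = \exp(n^\gamma)$, I have
\[
\frac{m_\psi(b_{n+1}) - m_\psi(b_n)}{m_\psi(b_n)} = \frac{\int_{n^\gamma}^{(n+1)^\gamma}\Theta(v)\,\mathrm{d}v}{\int_0^{n^\gamma}\Theta(v)\,\mathrm{d}v} = O\!\left(n^{\gamma(\alpha^\diamond - \alpha_\diamond) - 1 + O(\eps)}\right),
\]
which tends to $0$ provided $\gamma(\alpha^\diamond - \alpha_\diamond) < 1$. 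Combined with the constraint $\gamma > 1/\sigma$ from Lemma~\ref{l:as}, an admissible $\gamma$ exists precisely when $\sigma > \alpha^\diamond - \alpha_\diamond$. Under \eqref{cond:alpha} this is equivalent to $\alpha^\diamond - \alpha_\diamond < 1/3$, and under \eqref{cond:alpha_alt} to $\alpha^\diamond < 3\alpha_\diamond + 2$; these are exactly the hypotheses. Choosing any such $\gamma$ and invoking Lemma~\ref{l:as} then yields the claim.

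The main obstacle I anticipate is the borderline regime $2\alpha^\diamond = -1$, where a $\log\log b$ factor slips into $\mathbb{D}L_\psi(b)$ and must be absorbed into $\sigma$ with arbitrarily small loss, together with the bookkeeping of the $\eps$-slack forced by the strict ``$\ll$'' in \eqref{cond:theta}: one must check that the strict inequalities in \eqref{cond:alpha}/\eqref{cond:alpha_alt} leave enough room so that $\sigma > \alpha^\diamond - \alpha_\diamond$ survives after $\eps$ is taken small enough.
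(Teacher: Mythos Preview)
Your proposal is correct and follows essentially the same route as the paper: compute $\mathbb{E}L_\psi(b)=m_\psi(b)$ and $\mathbb{D}L_\psi(b)=3\int_1^b a^3\psi(a^{-1})^2\,\mathrm{d}a$ from the Poisson representation, bound $q(b)$ via the power bounds \eqref{cond:theta}, verify the two hypotheses of Lemma~\ref{l:as}, and conclude. One small simplification: the $\eps$-slack you carry is unnecessary, since $\theta(u)\ll|\log u|^{\alpha^\diamond}$ and $\theta(u)\gg|\log u|^{\alpha_\diamond}$ already yield $\mathbb{D}L_\psi(b)=O((\log b)^{2\alpha^\diamond+1})$ and $m_\psi(b)\gg(\log b)^{\alpha_\diamond+1}$ with the exact exponents; the only place a slack parameter is genuinely needed is to absorb the $\log\log b$ at the borderline $\alpha^\diamond=-\tfrac12$, exactly as you note.
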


\begin{proof}
By using the representation \eqref{e:L_int}, for $b > b_{\circ} = 1$, we observe that the expectation of $L_{\psi}(b)$ is exactly the quantity $m_{\psi}(b)$
\begin{eqnarray} \nonumber
	\mathbb{E}L_{\psi}(b) &=& \int\limits_{1}^{b} \int\limits_{0}^{\infty} l \psi(a^{-1}) \, 
(a^{2}\sqrt{l})^{-1}\varphi(\sqrt{l}/a)\, \mathrm{d}a \, \mathrm{d}l 
\\ \nonumber  
			      &=& \int\limits_{1}^{b} \int\limits_{0}^{\infty} 2z^{2}a \psi(a^{-1})\varphi(z) \, 
\mathrm{d}a \, \mathrm{d}z 
\\ \label{e:E_m}
			      &=& \int\limits_{1}^{b} a \psi(a^{-1}) \, \mathrm{d}a.
\end{eqnarray}
For the variance of $L_{\psi}(b)$ with $b \ge 1$, we have
\begin{equation*}
\mathbb{D}L_{\psi}(b) = 3\int_{1}^{b} a^{3} \psi(a^{-1})^{2} \, \mathrm{d}a.
\end{equation*}

If condition \eqref{cond:alpha} holds, then by using condition \eqref{cond:theta}, as $b \to \infty$, we have
\[
q(b) = O \left(\frac{\left( \log b \right)^{2\alpha^{\diamond} + 1} }{(\log b)^{2(\alpha_\diamond+1)}}\right) = O\left( \left( \log b \right)^{2\alpha^{\diamond} - 2\alpha_\diamond - 1} \right).  
\]
Let us verify condition \eqref{cond:m} for $\gamma = \frac{1}{-2\alpha^{\diamond} + 2\alpha_\diamond + 1} + \delta$ with some $\delta > 0$. Let $b_{n} := \exp(n^{\gamma})$, then, as $n \to \infty$, it holds that
\begin{eqnarray} \label{e:m_m}
1 \le \frac{m_\psi(b_{n+1})}{m_{\psi}(b_{n})} 
&=&  
1 + O\left( \frac{((\log b_{n+1})^{\alpha^{\diamond}+1} - (\log b_{n})^{\alpha^{\diamond}+1})}
{(\log b_{n})^{\alpha_\diamond+1}} \right) 
\\ &=& \nonumber
1 + O\left( \frac{(n+1)^{(\alpha^\diamond+1)\gamma} - n^{(\alpha^{\diamond}+1)\gamma}}{n^{(\alpha_\diamond+1)\gamma}} \right)  
\\ &=& \nonumber
1 + O\left(n^{(\alpha^{\diamond}+1)\gamma - 1 - (\alpha_\diamond+1)\gamma}\right) = 1 + o(1).
\end{eqnarray}
The last equality holds for sufficiently small $\delta > 0$ due to condition \eqref{cond:alpha}.

If condition \eqref{cond:alpha_alt} holds, then for any $\delta_1 > 0$, as $b \to \infty$, we have
\[
q(b) = O\left((\log b)^{-2\alpha_\diamond - 2 + \delta_1}\right).
\]
Condition \eqref{cond:m} is verified similarly for $\gamma = \frac{1}{2\alpha_\diamond + 2} + \delta$ with sufficiently small $\delta > 0$.

In both cases, the proof is completed by applying Lemma \ref{l:as}.
\end{proof}

\subsection{Proof of Theorem \ref{t:finite}}
Due to equality \eqref{e:E_m}, the uniform boundedness of $m_\psi(\cdot)$ implies that $\mathbb{E} L_{\psi}(\infty) < \infty$. This latter condition suffices to ensure that the Poisson integral $L_{\psi}(\infty)$ is well-defined as an almost surely finite random variable. Since $\tau(\cdot)$ is almost surely unbounded, Proposition \ref{p:L_I} implies the almost sure boundedness of $I_{W}^{\psi}(\cdot, r)$.  
Because $I_{W}^{\psi}(\cdot, r)$ is monotonically increasing, it converges almost surely to a finite random variable.

\subsection{Proof of Theorem \ref{t:infinite}}
Using Lemma \ref{l:tau_est}, with probability 1, for sufficiently large $T$, we have
\[
\tau(b_{T}^{-}) := \tau\left(T^{1/2}(\log \log T)^{-1}\right) < T < \tau\left(T^{1/2}(\log T)^{4}\right) =: \tau(b_{T}^{+}).
\]
By combining Proposition \ref{p:L_I} with condition \eqref{cond:infinite} and the monotonicity of $L_{\psi}(\cdot)$, we obtain
\begin{eqnarray*}
	\liminf\limits_{T \to \infty} \frac{I_W^{\psi}(T,r)}{m_\psi(T^{1/2})} 
	&\ge& \liminf\limits_{T \to \infty} 
	\frac{L_{\psi}(b_{T}^{-})}{m_\psi(b_{T}^{-})} 
	\frac{m_\psi(b_{T}^{-})}{m_{\psi}(T^{1/2})}, 
	\\             
	\limsup\limits_{T \to \infty} \frac{I_W^{\psi}(T,r)}{m_\psi(T^{1/2})} 
	&\le& \limsup\limits_{T \to \infty} 
	\frac{L_{\psi}(b_{T}^{+})}{m_\psi(b_{T}^{+})} 
	\frac{m_\psi(b_{T}^{+})}{m_{\psi}(T^{1/2})}.
\end{eqnarray*}
Taking into account Lemma \ref{l:start}, it holds that
\begin{equation} \label{e:lr_est2}
\liminf\limits_{T \to \infty} \frac{m_\psi(b_{T}^{-})}{m_{\psi}(T^{1/2})} \le 
\liminf\limits_{T \to \infty} \frac{I_W^{\psi}(T,r)}{m_\psi(T^{1/2})} 
\le
\limsup\limits_{T \to \infty} \frac{I_W^{\psi}(T,r)}{m_\psi(T^{1/2})} 
\le
\limsup\limits_{T \to \infty} \frac{m_\psi(b_{T}^{+})}{m_{\psi}(T^{1/2})}.
\end{equation}
Similarly to the computation in \eqref{e:m_m}, we have
\begin{eqnarray*}
	\frac{m_\psi(b_{T}^{+})}{m_{\psi}(T^{1/2})} &\le& 1 + O\left( \frac{(\log b_{T}^{+})^{\alpha^{\diamond}+1} - (\log (T^{1/2}))^{\alpha^{\diamond}+1}}{(\log (T^{1/2}))^{\alpha_\diamond+1}} \right)
	\\ &=&
	1 + O\left( (\log T)^{\alpha^{\diamond} - \alpha_\diamond - 1} \log \log T \right) = 1 + o(1).
\end{eqnarray*}
The lower bound for $\frac{m_\psi(b_{T}^{-})}{m_{\psi}(T^{1/2})}$ is obtained similarly. Substituting these bounds into inequality \eqref{e:lr_est2}, we obtain the desired result.

\bigskip
The author is grateful to M.A. Lifshits for setting the problem and for valuable assistance with this work.

\end{document}